\theoremstyle{thmstyleone}%
\newtheorem{theorem}{Theorem}[section]
\newtheorem{proposition}[theorem]{Proposition}%
\newtheorem{lemma}[theorem]{Lemma}
\newtheorem{example}[theorem]{Example}
\DeclareMathOperator*{\prox}{prox}
\DeclareMathOperator*{\argmin}{arg\, min}
\newcommand{\mtrx}[1]{\mathsf{#1}}
\newcommand{\mA}{\mtrx{A}}
\newcommand{\mB}{\mtrx{B}}
\newcommand{\mH}{\mtrx{H}}
\newcommand{\mI}{\mtrx{\mathrm{I}}}
\newcommand{\mP}{\mtrx{P}}
\newcommand{\vect}[1]{\bm{#1}}
\newcommand{\vb}{\vect{b}}
\newcommand{\ve}{\vect{e}}
\newcommand{\vu}{\vect{u}}
\newcommand{\vv}{\vect{v}}
\newcommand{\vx}{\vect{x}}
\newcommand{\vz}{\vect{z}}
\newcommand{\vw}{\vect{w}}
\newcommand{\vzero}{\vect{0}}
\newcommand{\R}{\mathbb{R}}
\newcommand{\Ss}{\mathbb{S}}
\newcommand{\la}{\langle}
\newcommand{\ra}{\rangle}
\newcommand{\Rnd}{\R_{\downarrow}^n}
\newcommand{\mU}{\mtrx{U}}
\newcommand{\ul}{\underline}
\begin{document}

\title[Computing Proximity Operators of Scale and Signed Permutation Invariant Functions]{Computing Proximity Operators of Scale and Signed Permutation Invariant Functions}


\author[1]{\fnm{Jianqing} \sur{Jia}}\email{jjia10@syr.edu}
\equalcont{These authors contributed equally to this work.}

\author[2]{\fnm{Ashley} \sur{Prater-Bennette}}\email{ashley.prater-bennette@us.af.mil}

\author*[1]{\fnm{Lixin} \sur{Shen}}\email{lshen03@syr.edu}
\equalcont{These authors contributed equally to this work.}

\affil*[1]{\orgdiv{Department of Mathematics}, \orgname{Syracuse University}, \orgaddress{\city{Syracuse}, \postcode{NY 13244}, \country{USA}}}

\affil[2]{\orgdiv{Information Directorate}, \orgname{Air Force Research Laboratory}, \orgaddress{\city{Rome}, \postcode{NY 10587},\country{USA}}}



\abstract{This paper investigates the computation of proximity operators for scale and signed permutation invariant functions. A scale-invariant function remains unchanged under uniform scaling, while a signed permutation invariant function retains its structure despite permutations and sign changes applied to its input variables. Noteworthy examples include the $\ell_0$ function and the ratios of $\ell_1/\ell_2$  and its square, with their proximity operators being particularly crucial in sparse signal recovery. We delve into the properties of scale and signed permutation invariant functions, delineating the computation of their proximity operators into three sequential steps: the $\vw$-step, $r$-step, and $d$-step. These steps collectively form a procedure termed as WRD, with the $\vw$-step being of utmost importance and requiring careful treatment. Leveraging this procedure, we present a method for explicitly computing the proximity operator of $(\ell_1/\ell_2)^2$ and introduce an efficient algorithm for the proximity operator of $\ell_1/\ell_2$.}

\keywords{sparse promoting functions, proximity operator, $\ell_1/\ell_2$, $(\ell_1/\ell_2)^2$ }


\pacs[MSC Classification]{90C26, 90C32, 90C55, 90C90, 65K05}

\maketitle

\section{Introduction}
This paper addresses the computation of the proximity operator for scale and signed permutation invariant functions. A scale-invariant function is characterized by its resilience to uniform scaling: it remains unaltered when its input undergoes a constant factor multiplication. This invariance extends to permutations, ensuring that changes in the order of input variables do not affect the function's value. Additionally, the function exhibits invariance under sign changes, meaning that if any component of an input is replaced by its negative counterpart, the function value remains consistent. In the context of this study, a signed permutation invariant function is defined as a mathematical function that retains its form despite permutations and sign changes applied to its input variables.

Several well-known examples of signed permutation invariant functions, as well as scale and signed permutation invariant functions, are presented:
\begin{itemize}
\item All $\ell_p$ norms, where $0< p \le \infty$, and log-sum penalty function in $\mathbb{R}^n$ are signed permutation invariant but not scale invariant, see \cite{Candes-Wakin-Boyd:JFAA:08,Prater-Shen-Tripp:JSC:2022};
\item The $\ell_0$ norm and the effective sparsity measure $\left(\frac{\|\cdot\|_q}{\|\cdot\|_1}\right)^{\frac{q}{1-q}}$,  $q\in (0, \infty) \setminus \{1\}$ are both scale and signed permutation invariant, see \cite{Lopes:IEEEIT:2016,Rahimi-Wang-Dong-Lou:SIAMSC:2019,Tang-Nehorai:IEEESP:2011,Yin-Esser-Xin:CIS14,Xu-Narayan-Tran-Webster:ACHA2021}.
\end{itemize}

The proximity operator is a mathematical concept used in optimization. This operator provides a computationally efficient way to find solutions for optimization problems involving nonsmooth functions \cite{Attouch-Bolte-Svaiter:MP:13,Beck-Teboulle:SIAMIS:09,Bolte-Sabach-Teboulle:MP:2014,Combettes-Wajs:MMS:05,Krol-Li-Shen-Xu:IP:2012,Li-Shen-Xu-Zhang:AiCM:15,Micchelli-Shen-Xu:IP-11,Parikh-Boyd:NF-Opt:14}. Given a proper lower semicontinuous function $f$ and a point $\vx$, the proximity operator of $f$ at $\vx$, denoted as $\mathrm{prox}_f(\vx)$, is defined as:
$$
\mathrm{prox}_f(\vx)=\arg\min\left\{\frac{1}{2}\|\vu-\vx\|_2^2+f(\vu): \vu \in \mathbb{R}^n\right\}.
$$
In simpler terms, the proximity operator finds a point $\vu$ that minimizes the sum of the function $f$ and half of the squared Euclidean distance between $\vu$ and a given point $\vx$.

Our focus of this paper is to study the proximity operator of scale and signed permutation invariant functions.
Our approach for computing the proximity operator of scale and signed permutation invariant functions is based on this observation: the space $\mathbb{R}^n$ is isomorphic to the Cartesian product of $\mathbb{R}$ and the $(n-1)$ dimensional unit sphere, denoted by $\mathbb{S}^{n-1}$. Mathematically, this can be expressed as:
$$
\mathbb{R}^n \cong \mathbb{R} \times \mathbb{S}^{n-1}.
$$
That is, for $\vu \in \mathbb{R}^n$, it can be converted to a pair $(r, \vw) \in \mathbb{R} \times \mathbb{S}^{n-1}$ such that
$\vu = r \vw$,  where  $r=\|\vu\|_2$ and $\vw=\vu/\|\vu\|_2$.
With this conversion, the task of finding a point $\vu \in \mathrm{prox}_f(\vx)$ transforms into finding a pair of $(r, \vw) \in \mathbb{R} \times \mathbb{S}^{n-1}$ such that $\vu = r \vw$. Exploring the properties of the scale and signed permutation invariant functions $f$, the process of finding this pair $(r, \vw)$ involves three consecutive steps. The first step is to solve an optimization problem with variable $\vw$ only, the second step straightforwardly yields $r=\langle \vx, \vw\rangle$, and the final step involves deciding whether to choose the origin or the scaled vector $\vu=r \vw$ as the resulting point. Clearly, the first step is crucial.

For all scale and signed permutation invariant functions, we will present a complete study on the following function
\begin{equation*}
h_p(\vx)=\left(\frac{\|\vx\|_1}{\|\vx\|_2}\right)^p \quad \mbox{for} \quad p=1,2.
\end{equation*}
Notably, there has been a gap in existing literature concerning the proximity operator of $h_2$, and we have observed a recent study that addresses the proximity operator of $h_1$ \cite{Tao:SIAMSC-2022}. In our work, we aim to fill this gap by providing a comprehensive analysis of the proximity operator for both $h_1$ and $h_2$ within the context of scale and signed permutation invariant functions.

With our approach, the optimization problem for $\vw$ associated with both $h_1$ and $h_2$ is nonconvex and takes the form of a constrained quadratic programming problem after certain simplifications. Despite the nonconvex nature of the objective functions and the constrained sets, we adopt a distinct strategy to address them individually.

For the $h_2$ function, the objective function of the quadratic programming problem involves only a quadratic term formulated by a structured symmetric rank-2 matrix. Explicitly demonstrating that this matrix possesses one positive eigenvalue and one negative eigenvalue, and the constrained set of the problem is $\mathbb{S}^{n-1}\cap \mathbb{R}^n_{+}$, where $\mathbb{R}^n_{+}$ is the first orthant of $\mathbb{R}^n$. While both the objective function and constrained set are nonconvex, we are able to develop a procedure to find the optimal solution $\vw$ through the eigenvector of the matrix corresponding to the negative eigenvalue, achieved in a finite number of iterations.

For the $h_1$ function, the objective function of the quadratic programming problem comprises a quadratic term formulated by a rank-one symmetric matrix and one linear term. The rank-1 matrix is negative definite, and the constrained set remains  $\mathbb{S}^{n-1}\cap \mathbb{R}^n_{+}$. Similar to the situation with $h_2$, both the objective function and constrained set are nonconvex. However, the procedure utilized for $h_2$ cannot be directly adapted for $h_1$. To address this, we relax the nonconvex feasible set $\mathbb{S}^{n-1}\cap \mathbb{R}^n_{+}$  to a convex set $\{\vw \in \mathbb{R}^n_{+}: \|\vw\|_2\le 1\}$. The resulting optimization problem maintains the same objective function as the non-relaxed version, but is now constrained in a convex domain. We establish conditions ensuring that the optimal solution to the relaxed problem lies on $\mathbb{S}^{n-1}\cap \mathbb{R}^n_{+}$ or to be the origin. Subsequently, we propose a projected gradient method to solve the relaxed optimization problem. Leveraging the fact that the optimal solution is related to the proximity operator of $h_1$ at a given point, we use this information as prior knowledge to initialize the projected gradient method. Through numerical experiments, our findings consistently indicate that the algorithm can successfully find the optimal $\vw$ for the original, unrelaxed optimization problem.

It's worth noting that a different approach for the proximity operator of $h_1$ has been reported recently in \cite{Tao:SIAMSC-2022}. That paper claimed to have derived the analytical solution of the proximity operator of $h_1$, relying on prior knowledge about the sparsity of the corresponding output from this proximity operator, which, however, is unknown in general. A bisection method was then applied for finding this desired sparsity.

The current literature, including works such as \cite{Lopes:IEEEIT:2016, Rahimi-Wang-Dong-Lou:SIAMSC:2019, Tang-Nehorai:IEEESP:2011, Yin-Esser-Xin:CIS14}, suggests that both $h_1$ and $h_2$  functions can effectively promote sparsity in underlying signals. However, to the best of our knowledge, there is a lack of theoretical justification for this claim. In this paper, we provide the theoretical proof that both $h_1$ and $h_2$ functions qualify as sparsity-promoting functions, as defined in \cite{Shen-Suter-Tripp:JOTA:2019}.

The outline of the rest of the paper is as follows: In Section~\ref{sec:properties}, we begin by presenting some properties of the proximity operators for scale and signed permutation invariant functions. These properties allow us to focus our discussion on these proximity operators within a specific set: each point lies in the first orthant of $\mathbb{R}^n$, and the entries of the point are in descending order. By employing a different representation of the points in this set, determining the proximity operators of scale and signed permutation invariant functions at these points essentially reduces to solving a quadratic programming problem constrained on a nonconvex set. We then introduce a comprehensive procedure called the WRD procedure, which comprises three distinct steps: $\vw$-step, $r$-step and $d$-step. This procedure enables efficient computation of proximity operators for scale and signed permutation invariant functions, offering a systematic approach to solving such problems.

In Section~\ref{sec:h2}, utilizing the WRD procedure, we compute the proximity operator of $h_2$. We are able to provide an explicit solution for the proximity operator of $h_2$  at any point in a highly efficient manner, thereby demonstrating the effectiveness of our approach.

In Section~\ref{sec:h1}, leveraging the WRD procedure, we compute the proximity operator of $h_1$. We are able to develop an efficient algorithm to evaluate the proximity operator of $h_1$ at any point, showcasing the versatility of our methodology.

The conclusion of this paper is drawn in Section~\ref{sec:conclusions}, summarizing the findings and contributions of our study. We discuss the implications of our results and propose avenues for future research.

\section{Scale and Signed Permutation Invariant Functions and their Proximity Operators}\label{sec:properties}

All functions in this work are defined on $\mathbb{R}^n$  the Euclidean space of dimension $n$. Bold lowercase letters, such as $\vx$, signify vectors, with the $j$th component represented by the corresponding lowercase letter $x_j$.  Matrices are indicated by bold uppercase letters such as  $\mA$ and $\mB$.  We use $\mathbb{R}^n_{+}$ to denote the set of points in $\mathbb{R}^n$ such that all entries of each point in the set are nonnegative. The cone of vectors $\vx$ in $\mathbb{R}^n_{+}$ satisfying $x_1 \ge x_2 \ge \ldots \ge x_n \ge 0$ is denoted by $\mathbb{R}^n_\downarrow$. We use $\mathbb{S}^{n-1}$ (or $\mathbb{B}^{n}$) to denote the unit sphere (or ball) in $\mathbb{R}^{n}$. We use $\mathbb{S}^{n-1}_{+}$ ($\mathbb{B}^{n}_{+}$ or $\mathbb{B}^{n}_{\downarrow}$) to denote the partial unit sphere $\mathbb{S}^{n-1}\cap \mathbb{R}^n_{+}$ (the partial unit ball $\mathbb{B}^{n}\cap \mathbb{R}^n_{+}$ or $\mathbb{B}^{n}\cap \mathbb{R}^n_{\downarrow}$) in $\mathbb{R}^{n}$.  Let $\mathcal{P}_n$ denote the set of all $n\times n$ signed permutation matrices: those matrices that have only one nonzero entry in every row or column, which is $\pm1$.



The $\ell_p$ norm of $\vx=[x_1,\ldots, x_n]^\top \in \mathbb{R}^n$ is defined as $\|\vx\|_p=(\sum_{k=1}^n |x_k|^p)^{1/p}$ for $1\le p <\infty$ and $\|\vx\|_\infty=\max\{|x_k|: k=1,2,\ldots,n\}$. When $p=0$,
$\|\vx\|_0$ represents the number of non-zero components in $\vx$. The standard inner product in $\mathbb{R}^n$ is denoted by $\langle \vu,\vv\rangle$, where $\vu$ and $\vv$ are vectors in $\mathbb{R}^n$.

We denote $[n]:=\{1,2,\ldots,n\}$ as an index set up to a positive integer $n$. For a subset $S$ of $[n]$, the notation $|S|$ represents the cardinality of $S$. For a vector $\vx \in \mathbb{R}^n$ and a subset $S$ of $[n]$,  $\vx_{S}$ denotes the vector that retains the entries with indices in $S$ of $\vx$ and sets the remaining entries to zero, or the subvector of $\vx$ with indices solely from $S$. The specific meaning of $\vx_S$ being referred to will be evident from the context of the discussion.

A function $f: \mathbb{R}^n \rightarrow \mathbb{R}$  is considered scale invariant if for all $\vx\in \mathbb{R}^n$ and $\alpha>0$, the following holds:
$$
f(\alpha \vx) = f(\vx).
$$
In other words, scaling the input by any positive constant does not alter the value of the function.

A function $f: \mathbb{R}^n \rightarrow \mathbb{R}$ is considered signed permutation invariant if it remains unchanged under the action of permutations and sign changes of its input variables. Formally, a function $f$ is signed permutation invariant if, for all permutations $\mP \in \mathcal{P}_n$ and for all vectors $\vx \in \mathbb{R}^n$, the following holds:
$$
f(\mP\vx) = f(\vx).
$$

A function $f$ defined on $\mathbb{R}^n$ with values in $\mathbb{R} \cup \{+\infty\}$ is proper if its domain $\mathrm{dom}(f)=\{x\in \mathbb{R}^n: f(x)<+\infty\}$ is nonempty, and $f$ is lower semicontinuous if its epigraph is a closed set. The set of proper and lower semicontinuous functions on  $\mathbb{R}^n$ to $\mathbb{R} \cup \{+\infty\}$ is denoted by $\Gamma(\mathbb{R}^n)$.

The proximity operator was introduced by Moreau in \cite{moreau:RASPS:62}. For a function $f \in \Gamma(\mathbb{R}^n)$, the proximity operator of $f$ at $\vz \in  \mathbb{R}^n$ with index $\alpha$ is defined by
$$
\mathrm{prox}_{\alpha f} (\vz) := \mathrm{arg}\min \left\{\frac{1}{2\alpha} \|\vx-\vz\|_2^2+f(\vx): \vx\in\mathbb{R}^n\right\}.
$$
The proximity operator of $f$ is a set-valued operator from $\mathbb{R}^n \rightarrow 2^{\mathbb{R}^n}$, the power set of $\mathbb{R}^n$. In this paper, for a scale and signed permutation function, we always assume that the set $\mathrm{prox}_{\alpha f} (\vz)$ is nonempty and compact.


\subsection{Properties}
The proximity operator exhibits certain properties concerning scale and signed permutation invariant functions.
\begin{lemma}\label{lem:properties}
Let $\vx \in \mathbb{R}^n$, $\mP \in \mathcal{P}_n$, $\alpha>0$, and $\lambda>0$. The following statements hold:
\begin{itemize}
\item[(i)] For a signed permutation invariant function $f \in \Gamma(\mathbb{R}^n)$, $\mathrm{prox}_{\lambda f}(\vx) = \mP^{-1} \mathrm{prox}_{\lambda f} (\mP\vx)$.
\item[(ii)] For a scale invariant function $f \in \Gamma(\mathbb{R}^n)$, $\mathrm{prox}_{\lambda f}(\alpha \vx) = \alpha \mathrm{prox}_{\lambda \alpha^{-2} f} (\vx)$.
\end{itemize}
\end{lemma}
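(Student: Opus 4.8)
The plan is to establish both identities with a single idea: perform a change of variables \emph{inside} the minimization that defines the proximity operator, using in (i) that every $\mP\in\mathcal{P}_n$ is orthogonal, and in (ii) that a positive dilation rescales the quadratic penalty in a predictable way. In both cases the substitution is a bijection of $\mathbb{R}^n$, so it carries argmin \emph{sets} to argmin sets, which is exactly what the set-valued equalities require; the standing assumption that $\mathrm{prox}_{\lambda f}$ is nonempty and compact guarantees that the statements are not vacuous.

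For (i), I would begin from
$$
\mathrm{prox}_{\lambda f}(\mP\vx)=\arg\min\Big\{\tfrac{1}{2\lambda}\|\vu-\mP\vx\|_2^2+f(\vu):\vu\in\mathbb{R}^n\Big\}
$$
and substitute $\vu=\mP\vv$. Since $\mP$ is orthogonal, $\|\mP\vv-\mP\vx\|_2=\|\vv-\vx\|_2$, and since $f$ is signed permutation invariant, $f(\mP\vv)=f(\vv)$; hence the objective, viewed as a function of $\vv$, is exactly $\tfrac{1}{2\lambda}\|\vv-\vx\|_2^2+f(\vv)$. As $\vv\mapsto\mP\vv$ is a bijection of $\mathbb{R}^n$, it follows that $\mathrm{prox}_{\lambda f}(\mP\vx)=\mP\,\mathrm{prox}_{\lambda f}(\vx)$, and applying $\mP^{-1}$ to both sides yields the asserted $\mathrm{prox}_{\lambda f}(\vx)=\mP^{-1}\mathrm{prox}_{\lambda f}(\mP\vx)$.

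For (ii), I would start from $\mathrm{prox}_{\lambda f}(\alpha\vx)=\arg\min\{\tfrac{1}{2\lambda}\|\vu-\alpha\vx\|_2^2+f(\vu):\vu\in\mathbb{R}^n\}$ and substitute $\vu=\alpha\vv$, which is a bijection of $\mathbb{R}^n$ because $\alpha>0$. Scale invariance gives $f(\alpha\vv)=f(\vv)$, and $\|\alpha\vv-\alpha\vx\|_2^2=\alpha^2\|\vv-\vx\|_2^2$, so the objective in $\vv$ becomes $\tfrac{\alpha^2}{2\lambda}\|\vv-\vx\|_2^2+f(\vv)=\tfrac{1}{2(\lambda\alpha^{-2})}\|\vv-\vx\|_2^2+f(\vv)$, whose argmin set is precisely $\mathrm{prox}_{\lambda\alpha^{-2}f}(\vx)$ by the definition of the proximity operator with index $\lambda\alpha^{-2}$. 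Transferring back through $\vu=\alpha\vv$ gives $\mathrm{prox}_{\lambda f}(\alpha\vx)=\alpha\,\mathrm{prox}_{\lambda\alpha^{-2}f}(\vx)$.

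I do not anticipate a genuine obstacle: the whole argument is the bookkeeping of how the orthogonal and dilation symmetries of the data point interact with the quadratic proximal term and with the invariances of $f$. The only points worth stating carefully are that the change-of-variable maps are bijections, so that the equalities hold at the level of solution sets rather than merely optimal values, and that $\mP^{-1}=\mP^{\top}\in\mathcal{P}_n$, so that the right-hand side of (i) is well defined.
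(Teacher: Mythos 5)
Your proof is correct and is precisely the routine change-of-variables argument the paper alludes to but omits ("based on the definitions\dots we skip the details"): orthogonality of $\mP$ plus signed permutation invariance for (i), and the rescaling of the quadratic term plus scale invariance for (ii), with the bijectivity of the substitutions ensuring the identities hold at the level of argmin sets. Nothing is missing; you have simply supplied the details the authors chose not to write out.
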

\begin{proof}\ \ The proof of the two items is based on the definitions of the proximity operator and scale and signed permutation invariant function. We skip the details of the proof here.
\end{proof}
%


For any vector $\vx \in \mathbb{R}^n$, there is a signed permutation $\mP \in \mathcal{P}_n$ such that $\mP \vx \in  \mathbb{R}_{\downarrow}^n$, that is, the entries of $\vx$ can be sorted  in a way of $|x_{\sigma(1)}|\ge |x_{\sigma(2)}| \ge \ldots \ge |x_{\sigma(n)}|$, where $\sigma(i)$ is the index of nonzero entry in the $i$th column of $\mP$.  By Lemma~\ref{lem:properties}, for a signed permutation invariant function in $\Gamma(\mathbb{R}^n)$, it is sufficient to consider its proximity operator on  $\mathbb{R}_{\downarrow}^n$.

For a vector $\vx \in \mathbb{R}_{\downarrow}^n$, we assert that $\vx$ exhibits $k$ blocks, characterized by  $(k+1)$ distinct indices $\{i_j: j\in [k+1]\}$ satisfying  $i_1=1$, $i_{k+1}=n$, and $i_{j}<i_{j+1}$. In these blocks, $\vx$ follows the pattern  $x_{i_j}=x_{i_{j+1}-1}<x_{i_{j+1}}$ for $1\le j \le  k-1$ and $x_{i_{k}}=x_{i_{k+1}}$. In essence, the vector $\vx$ comprises $k$ blocks, where entries within each block are identical, yet they differ from entries in other blocks.

\begin{lemma}\label{lemma:prox}
Let $f$ be a signed permutation invariant function in $\Gamma(\mathbb{R}^n)$, and let $\lambda>0$. Consider $\vx \in \mathbb{R}^n_{\downarrow}$, we assert that $\prox_{\lambda f}(\vx) \subseteq \mathbb{R}^n_{+}$. Furthermore, there exists a point $\vu \in \prox_{\lambda f}(\vx)$ such that $\vu \in \mathbb{R}_{\downarrow}^n$.
\end{lemma}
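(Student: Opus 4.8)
The plan is to prove both assertions by symmetrizing an arbitrary element of $\prox_{\lambda f}(\vx)$: first a sign-folding step that pushes it into the nonnegative orthant, then a sorting step that brings it into $\mathbb{R}^n_\downarrow$. Throughout I would write $\Phi(\vu):=\frac{1}{2\lambda}\|\vu-\vx\|_2^2+f(\vu)$ for the proximity objective, and use that $\vx\in\mathbb{R}^n_\downarrow$ means $x_1\ge x_2\ge\cdots\ge x_n\ge 0$.

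For the inclusion $\prox_{\lambda f}(\vx)\subseteq\mathbb{R}^n_+$, I would fix $\vu\in\prox_{\lambda f}(\vx)$ and let $\widetilde\vu$ be its entrywise absolute value, $\widetilde u_j=|u_j|$. Since $\widetilde\vu=\mP\vu$ for a suitable $\mP\in\mathcal{P}_n$, signed permutation invariance gives $f(\widetilde\vu)=f(\vu)$. As each $x_j\ge 0$, the coordinatewise inequality $(|u_j|-x_j)^2\le (u_j-x_j)^2$ holds — the difference equals $4u_jx_j\le 0$ when $u_j<0$ and is $0$ otherwise — so $\|\widetilde\vu-\vx\|_2\le\|\vu-\vx\|_2$ and hence $\Phi(\widetilde\vu)\le\Phi(\vu)$. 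Because $\vu$ minimizes $\Phi$, equality must hold, giving $\|\widetilde\vu-\vx\|_2=\|\vu-\vx\|_2$; combined with the coordinatewise domination this forces $(|u_j|-u_j)\,x_j=0$ for every $j$, so $u_j\ge0$ at each coordinate with $x_j>0$. This also yields $\widetilde\vu\in\prox_{\lambda f}(\vx)\cap\mathbb{R}^n_+$, which is what I would feed into the next step.

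For the existence of a minimizer in $\mathbb{R}^n_\downarrow$, I would take $\widetilde\vu$ from the previous step and let $\mQ$ be a permutation matrix that sorts its entries nonincreasingly, so $\widehat\vu:=\mQ\widetilde\vu\in\mathbb{R}^n_\downarrow$ (it is nonnegative because $\widetilde\vu$ is). A permutation is in particular a signed permutation, so $f(\widehat\vu)=f(\widetilde\vu)$ and $\|\widehat\vu\|_2=\|\widetilde\vu\|_2$. Since $\widehat\vu$ and $\vx$ are both nonincreasing, the rearrangement inequality gives $\langle\widehat\vu,\vx\rangle\ge\langle\widetilde\vu,\vx\rangle$, so
\[
\|\widehat\vu-\vx\|_2^2=\|\widehat\vu\|_2^2-2\langle\widehat\vu,\vx\rangle+\|\vx\|_2^2\le\|\widetilde\vu\|_2^2-2\langle\widetilde\vu,\vx\rangle+\|\vx\|_2^2=\|\widetilde\vu-\vx\|_2^2 .
\]
Hence $\Phi(\widehat\vu)\le\Phi(\widetilde\vu)=\min\Phi$, so $\widehat\vu\in\prox_{\lambda f}(\vx)$, and $\widehat\vu\in\mathbb{R}^n_\downarrow$ is the desired point.

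I expect the sorting step to be entirely routine; the only delicate point is the inclusion at coordinates where $x_j=0$, where the quadratic term cannot detect the sign of $u_j$, so $u_j$ and $|u_j|$ yield the same objective value. Verbatim nonnegativity there needs a little more than signed permutation invariance — it follows from the behavior of the specific $f$ (e.g., for the $\ell_0$ norm the one-dimensional subproblem at a zero coordinate has the unique minimizer $0$), so I would either strengthen the hypothesis slightly or verify it for each function to which the lemma is applied. The ``furthermore'' clause, which is what the WRD procedure relies on, is not affected by this subtlety and follows exactly as above.
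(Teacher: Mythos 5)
Your proposal is correct, and it diverges from the paper's argument in an instructive way on the ``furthermore'' clause. The paper does not sort in one shot: it decomposes $\vx$ into its constant blocks, shows by a pairwise-swap contradiction that the minimum of $\vu$ over one block cannot be exceeded by the maximum over a later block (swapping two offending entries strictly decreases $\|\vu-\vx\|_2^2$ because $x_{i_1}>x_{i_2}$), and then sorts within each block, where the objective is unaffected. Your route---apply the decreasing rearrangement $\mQ$ to a nonnegative minimizer and invoke the rearrangement inequality $\langle\mQ\widetilde\vu,\vx\rangle\ge\langle\widetilde\vu,\vx\rangle$ together with permutation invariance of $f$ and of the norm---reaches the same conclusion in one step and is cleaner; the paper's block argument buys nothing extra here, since it too only establishes existence of a sorted minimizer rather than that every minimizer is sorted. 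For the inclusion $\prox_{\lambda f}(\vx)\subseteq\mathbb{R}^n_{+}$, your sign-folding step is essentially the paper's, but you are more careful: the paper's displayed inequality is also non-strict, so it likewise only yields $u_j\ge 0$ at coordinates with $x_j>0$, and the subtlety you flag at coordinates where $x_j=0$ is a genuine gap in the paper's own proof of the set inclusion as stated (a one-dimensional even function whose proximity operator at $0$ contains a negative point already gives a counterexample to the verbatim inclusion). Since the WRD procedure only needs the existence of a minimizer in $\mathbb{R}^n_{\downarrow}$ and the equality of the minimal values over $\mathbb{R}^n$ and $\mathbb{R}^n_{+}$, your honest qualification does not weaken anything the paper actually uses downstream.
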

\begin{proof} \ \ To establish $\prox_{\lambda f}(\vx) \subseteq \mathbb{R}^n_{+}$, we observe that the objective function from the definition of $\prox_{\lambda f}(\vx)$ is
$\frac{1}{2\lambda}\|\vu-\vx\|_2^2+f(\vu)$ for all $\vu \in \mathbb{R}^n$. As $f$ is a signed permutation invariant function, $f(\vu)=f(\mP\vu)$ for all $\mP \in \mathcal{P}_n$. Given $\vx \in \mathbb{R}_{\downarrow}^n$, our discussion can be restricted to $\vu \in \R_{+}^n$; otherwise, say the first element $u_1$ of $\vu$ is negative, then $(-u_1-x_1)^2+\sum_{\ell=2}^n (u_\ell-x_\ell)^2 \le (u_1-x_1)^2+\sum_{\ell=2}^n (u_\ell-x_\ell)^2$. From the above discussion, we conclude that $\prox_{\lambda f}(\vx) \subseteq \mathbb{R}_{+}^n$.

Now, suppose $\vu \in \prox_{\lambda f}(\vx)$. If the vector $\vx$ has one block, that is, all entries of $\vx$ are the same. Clearly, we can rearrange entries of $\vu$ so that the rearranged one is in $\mathbb{R}^n_\downarrow$ and is still in $\prox_{\lambda f}(\vx)$. If vector $\vx \in \R_{\downarrow}^n$ has $k\ge 2$ blocks, characterized by $(k+1)$ distinct indices $\{i_j: j\in [k+1]\}$. We define $u_{\overline{j}}=\max\{u_\ell: i_j\le \ell \le i_{j+1}-1\}$ and $u_{\underline{j}}=\min\{u_\ell: i_j\le \ell \le i_{j+1}-1\}$ for $1 \le j \le k-1$, and $u_{\overline{k}}=\max\{u_\ell: i_k\le \ell \le i_{k+1}\}$ and $u_{\underline{k}}=\min\{u_\ell: i_k\le \ell \le i_{k+1}\}$. We claim that $u_{\underline{j}} \ge u_{\overline{j+1}}$ for $1 \le j \le k-1$. If these inequalities do not hold for some $1 \le i \le k-1$, assume, without loss of generality, that $u_{\underline{1}} < u_{\overline{2}}$. One can assume that $u_1=u_{\underline{1}}$ and $u_{i_2}=u_{\overline{2}}$.  In this case, let $\widetilde{\vu}$ be a vector from $\vu$ by exchanging its first and the $i_2$ components. Immediately, $f(\widetilde{\vu})=f(\vu)$, and
\begin{eqnarray*}
\|\widetilde{\vu}-\vx\|_2^2-\|\vu-\vx\|_2^2
&=&(u_{\overline{2}}-x_1)^2+(u_{\underline{1}}-x_{i_2})^2-(u_{\underline{1}}-x_1)^2-(u_{\overline{2}}-x_{i_2})^2\\
&=&2(u_{\underline{1}}-u_{\overline{2}})(x_1-x_{i_2})< 0
\end{eqnarray*}
due to the conditions of $x_1=x_{i_1}>x_{i_2}$ and $u_{\underline{1}} < u_{\overline{2}}$. This conflicts with our assumption of  $\vu \in \prox_{\lambda f}(\vx)$.

Finally, since all entries in each block of $\vx$ are the same, arranging the entries of $\vu \in \prox_{\lambda f}(\vx)$ for the indices in the same block in descending order results in $\vu$ still belonging to $\prox_{\lambda f}(\vx)$. Thus, there exists a point $\vu \in \prox_{\lambda f}(\vx)$ such that $\vu \in \R_{\downarrow}^n$.
\end{proof}

\subsection{Reformulation}
Our focus of this paper is to study the proximity operator of scale and signed permutation invariant functions. Our approach for computing the proximity operator of scale and signed permutation invariant functions is based on this observation: the space $\mathbb{R}^n$ is isomorphic to the Cartesian product of $\mathbb{R}$ and $\mathbb{S}^{n-1}$.
That is, for $\vu \in \mathbb{R}^n$, it can be converted to a pair $(r, \vw) \in \mathbb{R} \times \mathbb{S}^{n-1}$ such that
$$
\vu = r \vw,
$$
where
$$
r=\|\vu\|_2 \in \mathbb{R} \quad \mbox{and} \quad  \vw=\frac{\vu}{\|\vu\|_2} \in \mathbb{S}^{n-1}.
$$
With this conversion, the task of finding a point $\vu \in \mathrm{prox}_f(\vx)$ transforms into finding a pair of $(r, \vw) \in \mathbb{R} \times \mathbb{S}^{n-1}$ such that $\vu = r \vw$.

\begin{theorem}\label{thm:main}
    Let $f$ be a scale and signed permutation invariant function in $\Gamma(\mathbb{R}^n)$, and let $\rho>0$. Consider a vector $\vx \in \mathbb{R}_{\downarrow}^n$ and define
    \begin{equation}\label{def:F}
        F(\vu):=\frac{\rho}{2}\|\vu-\vx\|_2^2 +  f (\vu).
    \end{equation}
Then $\vx^\star \in \mathrm{prox}_{\frac{1}{\rho}f}(\vx)$ if and only if $\vx^\star$ is given by
\begin{equation}\label{x-star}
\vx^{\star} \in \left\{
\begin{array}{ll}
    \{\mathbf{0}\}, & \hbox{if $F(\mathbf{0})<F(\langle \vx, \vw^\star\rangle \vw^\star)$;} \\
    \{\mathbf{0},\langle \vx, \vw^\star\rangle \vw^\star\}, & \hbox{if $F(\mathbf{0})=F(\langle \vx, \vw^\star\rangle \vw^\star)$;} \\
    \{\langle \vx, \vw^\star\rangle \vw^\star\}, & \hbox{otherwise.}
  \end{array}
\right.
\end{equation}
where $\vw^\star$ is a solution to the following optimization problem
    \begin{equation}\label{w-star}
        \min\left\{-\frac{\rho}{2}\langle \vx, \vw\rangle^2+f(\vw): \vw \in \mathbb{S}^{n-1}_{+}\right\}.
    \end{equation}
\end{theorem}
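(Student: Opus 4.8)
The plan is to exploit the polar-type parametrization $\vu=r\vw$ with $r=\|\vu\|_2\ge 0$ and $\vw\in\mathbb{S}^{n-1}$, to minimize $F$ by first minimizing over the radius $r$ with the direction $\vw$ held fixed and then over $\vw$, and to treat the origin $\vu=\mathbf{0}$ as an isolated exceptional point. The origin must be handled by hand precisely because scale invariance constrains $f$ only along open rays emanating from $\mathbf{0}$ and says nothing about $f(\mathbf{0})$; this is exactly the source of the three-way branching in~\eqref{x-star}.

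First I would reduce to the nonnegative orthant. Since $f$ is signed permutation invariant and $\vx\in\mathbb{R}^n_{\downarrow}$, Lemma~\ref{lemma:prox} gives $\prox_{\frac{1}{\rho}f}(\vx)\subseteq\mathbb{R}^n_{+}$, so it suffices to minimize $F$ over $\mathbb{R}^n_{+}=\{\mathbf{0}\}\cup(\mathbb{R}^n_{+}\setminus\{\mathbf{0}\})$; every $\vu$ in the second piece is uniquely $\vu=r\vw$ with $r=\|\vu\|_2>0$ and $\vw=\vu/r\in\mathbb{S}^{n-1}_{+}$, and conversely $r\vw\in\mathbb{R}^n_{+}\setminus\{\mathbf{0}\}$ for every $r>0$, $\vw\in\mathbb{S}^{n-1}_{+}$. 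Next, for the $r$-step, I would fix such $r$ and $\vw$ and use $f(r\vw)=f(\vw)$ (scale invariance) together with $\|\vw\|_2=1$ to get the identity
\begin{equation*}
F(r\vw)=\frac{\rho}{2}\bigl(r-\langle \vx,\vw\rangle\bigr)^2+\frac{\rho}{2}\|\vx\|_2^2-\frac{\rho}{2}\langle \vx,\vw\rangle^2+f(\vw).
\end{equation*}
Because $\vx$ and $\vw$ both lie in the nonnegative orthant, $\langle \vx,\vw\rangle\ge 0$, so $\inf_{r>0}F(r\vw)=\frac{\rho}{2}\|\vx\|_2^2+\bigl(-\frac{\rho}{2}\langle \vx,\vw\rangle^2+f(\vw)\bigr)$, attained uniquely at $r=\langle \vx,\vw\rangle$ when $\langle \vx,\vw\rangle>0$.

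Then, for the $\vw$-step, I would take the infimum over $\vw\in\mathbb{S}^{n-1}_{+}$ of the last display; compactness of $\mathbb{S}^{n-1}_{+}$ and lower semicontinuity of $f$ guarantee that~\eqref{w-star} attains its minimum at some $\vw^\star$, and substituting $r=\langle \vx,\vw^\star\rangle$ into the identity above yields
\begin{equation*}
\inf_{\vu\in\mathbb{R}^n_{+}\setminus\{\mathbf{0}\}}F(\vu)=\frac{\rho}{2}\|\vx\|_2^2-\frac{\rho}{2}\langle \vx,\vw^\star\rangle^2+f(\vw^\star)=F\bigl(\langle \vx,\vw^\star\rangle\vw^\star\bigr).
\end{equation*}
Hence $\min_{\vu\in\mathbb{R}^n}F(\vu)=\min\{F(\mathbf{0}),\,F(\langle \vx,\vw^\star\rangle\vw^\star)\}$, and comparing these two numbers produces the three cases of~\eqref{x-star} for whether $\mathbf{0}$ is a global minimizer. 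For the remaining direction, if $\vu\neq\mathbf{0}$ is a minimizer I would write $\vu=r\vw$ as above and note that the chain $F(r\vw)\ge \frac{\rho}{2}\|\vx\|_2^2-\frac{\rho}{2}\langle\vx,\vw\rangle^2+f(\vw)\ge F(\langle\vx,\vw^\star\rangle\vw^\star)$ must be an equality throughout, which forces $r=\langle \vx,\vw\rangle$ and $\vw$ to solve~\eqref{w-star}, i.e.\ $\vu=\langle \vx,\vw^\star\rangle\vw^\star$. The converse inclusion is immediate since every vector listed in~\eqref{x-star} realizes the value $\min\{F(\mathbf{0}),F(\langle\vx,\vw^\star\rangle\vw^\star)\}$.

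The hard part is not a single delicate estimate but the bookkeeping around the origin together with the interchange of $\inf_r$ and $\inf_\vw$; the one genuinely subtle point is the degenerate case $\langle \vx,\vw^\star\rangle=0$, in which $\langle \vx,\vw^\star\rangle\vw^\star=\mathbf{0}$, the inner infimum over $r>0$ need not be attained, and the second and third alternatives of~\eqref{x-star} collapse onto the first — here I would check that this is consistent with the standing assumption that $\prox_{\frac{1}{\rho}f}(\vx)$ is nonempty and compact (nonemptiness in fact forces $f(\mathbf{0})\le\min_{\mathbb{S}^{n-1}_{+}}f$ in that situation). As an alternative to invoking Lemma~\ref{lemma:prox} in the first step, one may instead reduce~\eqref{w-star} from $\mathbb{S}^{n-1}$ to $\mathbb{S}^{n-1}_{+}$ directly, using that flipping the signs of the components of $\vw$ leaves $f(\vw)$ unchanged and cannot decrease $|\langle \vx,\vw\rangle|$ when $\vx\in\mathbb{R}^n_{+}$.
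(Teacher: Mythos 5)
Your proposal is correct and follows essentially the same route as the paper's proof: reduce to $\mathbb{R}^n_{+}$ via Lemma~\ref{lemma:prox}, write $\vu=r\vw$, complete the square in $r$, and minimize the remaining $\vw$-dependent term over $\mathbb{S}^{n-1}_{+}$. Your additional bookkeeping — attainment of the minimum in \eqref{w-star}, the equality-chain argument for the converse direction, and the degenerate case $\langle\vx,\vw^\star\rangle=0$ where the inner infimum over $r>0$ is not attained — is more careful than the paper's terser treatment but does not change the argument.
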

\begin{proof}
From the definition of proximity operator,
$$
\mathrm{prox}_{\frac{1}{\rho}f}(\vx)=\argmin\left\{F(\vu): \vu \in \mathbb{R}^n\right\}.
$$
By Lemma~\ref{lem:properties} and Lemma~\ref{lemma:prox}, for $\vx \in \mathbb{R}_{\downarrow}^n$ we establish that
$$
\argmin\left\{F(\vu): \vu \in \mathbb{R}^n\right\}=\argmin\left\{F(\vu): \vu \in \mathbb{R}^n_{+}\right\}.
$$
To delve deeper into the optimization problem on the right-hand side, we express  $\vu = r \vw$ with $r\ge 0$ and $\vw \in \mathbb{S}_{+}^{n-1}$. Consequently, for $r=0$,
$$
F(\mathbf{0})=\frac{\rho}{2}\|\vx\|_2^2+f(\mathbf{0})$$
and for $r>0$
\begin{eqnarray}
F(\vu) &=& \frac{\rho}{2}\|r\vw-\vx\|_2^2 +  f (r\vw) \nonumber \\
&=&\frac{\rho}{2}(r^2-2r\langle \vw, \vx\rangle +\|\vx\|_2^2) + f(\vw)  \nonumber \\
&=&\frac{\rho}{2}(r-\langle \vw, \vx\rangle)^2+\frac{\rho}{2}\|\vx\|_2^2+\left(-\frac{\rho}{2}\langle \vw, \vx\rangle^2 + f(\vw)\right). \label{tmp:equi}
\end{eqnarray}
In equation \eqref{tmp:equi}, the terms are as follows: The first term $\frac{\rho}{2}(r-\langle \vw, \vx\rangle)^2$ can always achieve the minimum value $0$ by taking $r=\langle \vw, \vx\rangle$; the second term $\frac{\rho}{2}\|\vx\|_2^2$ is constant with respect to the pair $(r,\vw)$; and third term $-\frac{\rho}{2}\langle \vw, \vx\rangle^2 + f(\vw)$  is solely a function of $\vw$. Therefore, we seek $\vw^\star$ that minimizes the third term with respect to $\vw$, i.e., solving the optimization problem~\eqref{w-star}, then form the expression $\langle \vx, \vw^\star\rangle \vw^\star$. Hence, the conclusion of this theorem holds.
\end{proof}

In the following discussion, we use the notation $F$ in \eqref{def:F} to represent the objective function for $\mathrm{prox}_{\frac{1}{\rho}f}(\vx)$ and denote
    \begin{equation}\label{def:G}
        G(\vw):=-\frac{\rho}{2}\langle \vx, \vw\rangle^2+f(\vw).
    \end{equation}
to represent the objective function of \eqref{w-star}.

The significance of the scale and signed permutation invariance of $f$ becomes evident in the proof of the theorem above.  The scale invariance of $f$ facilitates the discussion from $\mathbb{R}^{n}$ to $\mathbb{S}^{n-1}$, while the signed permutation invariance narrows the focus from $\mathbb{S}^{n-1}$ to $\mathbb{S}^{n-1}_{+}$, allowing us to isolate the impact of $r$ and $\vw$ when solving an optimization problem that involves $\vw$ exclusively.

In accordance with Theorem~\ref{thm:main}, the process of determining the pair $(r,\vw)$ involves three distinct steps:
\begin{itemize}
\item $\vw$-step: In this step, the objective is to find an optimal solution $\vw^\star$ to the optimization problem~\eqref{w-star}.
\item $r$-step: Following the $\vw$-step, the corresponding $r^\star$ is computed as $r^\star=\langle \vx, \vw^\star\rangle$, where $\vw^\star$ is the output from {$\vw$-step}.
\item $d$-step: This final step determines $\vx^\star$ according to \eqref{x-star}.
\end{itemize}
Upon completing these three steps, as shown in \eqref{x-star}, $\vx^\star$ belongs to $\mathrm{prox}_{\frac{1}{\rho}f}(\vx)$. For ease of reference in the subsequent discussion, this procedure is referred to as WRD ($\vw$-step, $r$-step, $d$-step).

To show the applicability of the WRD procedure, we present the proximity operator of the $\ell_0$ norm, a typical scale and signed permutation invariant function.


\begin{example}\label{example:Haar}
The proximity operator of the $\ell_0$ norm at $\vx$ with index $1/\rho$ is, see, e.g.,  \cite{donoho:ieeeit:95,Shen-Suter-Tripp:JOTA:2019},
$$
(\mathrm{prox}_{\frac{1}{\rho}\|\cdot\|_0}(\vx))_i=
\left\{
\begin{array}{ll}
    \{x_i\}, & \hbox{if $|x_i|>\sqrt{2/\rho}$;} \\
    \{0,x_i\}, & \hbox{if $|x_i|=\sqrt{2/\rho}$;}\\
    \{0\}, & \hbox{otherwise.}
  \end{array}
\right.
$$

We intend to apply the WRD procedure for computing $\mathrm{prox}_{\frac{1}{\rho}\|\cdot\|_0}$. Assuming $\vx \in \mathbb{R}^n_{\downarrow}$, and following the approach used in the proof of Theorem~\ref{thm:main}, we define $F(\vu):=\frac{\rho}{2}\|\vu-\vx\|_2^2 +  \|\vu\|_0$. The next step involves seeking  the optimal solution to optimization problem~\eqref{w-star} for $\vw \in  \mathbb{S}^{n-1}_{+}$, where $G(\vw):=-\frac{\rho}{2}\langle \vx, \vw\rangle^2 +\|\vw\|_0$.  Thus, for $\vw \in  \mathbb{S}^{n-1}_{+}$ with an $\ell_0$ norm of $k$, the smallest value of $G$ is achieved when $\vw$ is aligned with the first $k$ entries of $\vx$, that is,
$$
G\left(\frac{\vx_{[k]}}{\|\vx_{[k]}\|_2}\right)=-\frac{\rho}{2} \|\vx_{[k]}\|_2^2+k=\sum_{i=1}^k \left(-\frac{\rho}{2} x_i^2+1\right).
$$
Here $\vx_{[k]}$ keeps the first $k$ entries of $\vx$ and sets the remaining entries zeros. Therefore, the output in the $\vw$-step of the WRD procedure is given by
$$
\argmin_{\vw \in \mathbb{S}^{n-1}_{+}} G(\vw)=\left\{
\begin{array}{ll}
    \left\{\frac{\vx_{[1]}}{\|\vx_{[1]}\|_2}\right\}, & \hbox{if $x_1<\sqrt{2/\rho}$;} \\

    \left\{\frac{\vx_{S}}{\|\vx_{S}\|_2}: S \subseteq [p], |S|\ge 1\right\}, & \hbox{if $\exists p \in [n]$ s.t. $x_1=x_p=\sqrt{2/\rho}>x_{p+1}$;}\\

    \left\{\frac{\vx_{[k]}}{\|\vx_{[k]}\|_2}\right\}, & \hbox{if $\exists k \in [n]$ s.t. $x_k>\sqrt{2/\rho}>x_{k+1}$;}\\

    \left\{\frac{\vx_{[k]\cup S}}{\|\vx_{[k]\cup S}\|_2}: S \subseteq [p], |S|\ge 1 \right\}, & \hbox{if $\exists k \in [n]$ and $p\in[n-k]$ s.t. }\\
    &\mbox{ \hskip 0.2cm $x_k>\sqrt{2/\rho}=x_{k+1}=x_{k+p}>x_{k+p+1}$}. \\

  \end{array}
\right.
$$
This output represents the solutions to the $\vw$-step of the WRD. Subsequently, choosing a vector  $\vw^\star \in \argmin_{\vw \in \mathbb{S}^{n-1}_{+}} G(\vw)$, the $r$-step generates $r=\langle \vx, \vw^\star\rangle$.  With the pair $(r,\vw^\star)$, the $d$-step of the  WRD compares the difference between $F(\langle \vx, \vw^\star\rangle \vw^\star)$ and  $F(\mathbf{0})$, resulting in
$$
F(\langle \vx, \vw^\star\rangle \vw^\star) - F(\mathbf{0}) = \sum_{i=1}^k \left(-\frac{\rho}{2} x_i^2+1\right),
$$
where $k$ is equal to $1$ if $x_1 \le \sqrt{2/\rho}$ or is the integer such that $x_k>\sqrt{2/\rho} \ge x_{k+1}$. Clearly,  the vector $\mathbf{0}$ is in $\mathrm{prox}_{\frac{1}{\rho}\|\cdot\|_0}(\vx)$ if $x_1 < \sqrt{2/\rho}$, and both $\mathbf{0}$ and $\langle \vx, \vw^\star\rangle \vw^\star$ are in $\mathrm{prox}_{\frac{1}{\rho}\|\cdot\|_0}(\vx)$ if $x_1 = \sqrt{2/\rho}$; otherwise  $\langle \vx, \vw^\star\rangle \vw^\star$ is in $\mathrm{prox}_{\frac{1}{\rho}\|\cdot\|_0}(\vx)$. These discussions affirm that the WRD procedure accurately recovers the proximity operator of the $\ell_0$ norm.
\end{example}

In the rest of the paper, we focus on computing the proximity operator of the function below:
\begin{equation}\label{def:hp}
h_p(\vx)=\left\{
\begin{array}{ll}
    \left(\frac{\|\vx\|_1}{\|\vx\|_2}\right)^p, & \hbox{if $\vx \neq \mathbf{0}$;} \\
    0,&{otherwise},
  \end{array}
\right.
\end{equation}
for $p=1$ and $2$. This function is lower semicontinuous and for all nonzero vectors $\vx \in \mathbb{R}^n$,
$1\le h_p(\vx) \le n^{p/2}$. Thus, the proximity operator of $h_p$ at any point is nonempty.
Notably, setting the value of $h_p$ at the origin to any value smaller than or equal to 1 preserves the lower semicontinuity of the function. For example, $h_1(\mathbf{0})$ is set to be $1$ as illustrated in \cite{Tao:SIAMSC-2022}.  Therefore, our proposed WRD procedure remains applicable. Lastly, it's important to note that in $\mathbb{R}$, our definition of $h_p$ aligns consistently with the $\ell_0$ norm, that is, $h_p(\vx)=\|\vx\|_0$ for $\vx \in \mathbb{R}$.

In the next section, we consider the computation of the proximity operator of $h_2$ first.

\section{The Proximity Operator of $h_2$}\label{sec:h2}
We plan to use the WRD procedure to compute the proximity operator of $h_2$. We begin with showing the optimization problem \eqref{w-star} associated with the $\vw$-step of the WRD.

Define $\ve$ as a vector with all its components $1$. For $\vx \in \mathbb{R}_{+}^n$, we have
$$\langle \vw, \vx\rangle^2 = \vw^\top \vx \vx^\top \vw \quad \mbox{and} \quad \|\vw\|_1^2 = \vw^\top \ve\ve^\top\vw.
$$
Set
\begin{equation}\label{def:A2}
\mA_{\rho,\vx} := 2\ve\ve^\top-{\rho}\vx\vx^\top.
\end{equation}
The corresponding function $G$ in \eqref{def:G} becomes
$$
G(\vw)=\frac{1}{2}\vw^\top \mA_{\rho,\vx} \vw,
$$
Hence, the optimization problem \eqref{w-star} is a quadratic programming constrained on $\mathbb{S}_+^{n-1}$.

We promptly obtain a result concerning the proximity operator of $h_2$ at points that are multiples of the vector $\ve$ as follows:
\begin{theorem}\label{thm:positive-multiple-1}
For $\rho>0$ and  $\vx =\alpha \ve$ for some $\alpha>0$, then
$$
\mathrm{prox}_{\frac{1}{\rho} h_2} (\vx)=\left\{
                          \begin{array}{ll}
                            \{\alpha \ve\}, & \hbox{if $\rho \alpha^2>2$;} \\
                            \{\mathbf{0}\}\cup\{\alpha \|\vw\|_1 \vw: \vw \in \mathbb{S}_{+}^{n-1}\}, & \hbox{if $\rho \alpha^2=2$;} \\
                            \{\mathbf{0}\}, & \hbox{if $\rho \alpha^2<2$.}
                          \end{array}
                        \right.
$$
\end{theorem}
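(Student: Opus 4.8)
The plan is to run the WRD procedure of Theorem~\ref{thm:main} directly. Since $\alpha>0$, the vector $\vx=\alpha\ve$ lies in $\mathbb{R}^n_\downarrow$ and $h_2$ is a scale and signed permutation invariant function in $\Gamma(\mathbb{R}^n)$, so Theorem~\ref{thm:main} applies with $f=h_2$: the whole set $\mathrm{prox}_{\frac1\rho h_2}(\vx)$ is the union, over all minimizers $\vw^\star$ of \eqref{w-star}, of the sets described in \eqref{x-star}. Everything then comes down to (a) solving \eqref{w-star} for $\vx=\alpha\ve$, and (b) comparing $F(\mathbf 0)$ with $F(\langle\vx,\vw^\star\rangle\vw^\star)$.

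\emph{$\vw$-step.} First I would simplify $G$ in \eqref{def:G}. For $\vw\in\mathbb{S}^{n-1}_{+}$ we have $\langle\vx,\vw\rangle=\alpha\langle\ve,\vw\rangle=\alpha\|\vw\|_1$ and, since $\|\vw\|_2=1$, $h_2(\vw)=\|\vw\|_1^2$; hence
$$
G(\vw)=\Bigl(1-\tfrac{\rho\alpha^2}{2}\Bigr)\|\vw\|_1^2 .
$$
So minimizing $G$ over $\mathbb{S}^{n-1}_{+}$ reduces to optimizing $\|\vw\|_1^2$ there. The extreme values of $\|\vw\|_1$ on $\mathbb{S}^{n-1}_{+}$ are $1$ and $\sqrt n$: the bound $\|\vw\|_1\ge\|\vw\|_2=1$ holds with equality exactly when $\vw$ has a single nonzero component (equal to $1$), and the Cauchy--Schwarz bound $\|\vw\|_1=\langle\ve,\vw\rangle\le\sqrt n\,\|\vw\|_2=\sqrt n$ holds with equality exactly when $\vw=\ve/\sqrt n$. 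Consequently: if $\rho\alpha^2>2$ the coefficient of $\|\vw\|_1^2$ is negative, so the unique minimizer is $\vw^\star=\ve/\sqrt n$; if $\rho\alpha^2<2$ the coefficient is positive, so the minimizers are exactly the $n$ single-spike unit vectors; and if $\rho\alpha^2=2$ then $G\equiv 0$ on $\mathbb{S}^{n-1}_{+}$, so every $\vw\in\mathbb{S}^{n-1}_{+}$ is a minimizer.

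\emph{$r$-step and $d$-step.} For any minimizer $\vw^\star$, Theorem~\ref{thm:main} gives $r^\star=\langle\vx,\vw^\star\rangle=\alpha\|\vw^\star\|_1$, so the candidate point is $r^\star\vw^\star=\alpha\|\vw^\star\|_1\vw^\star$; this is $\alpha\ve$ when $\vw^\star=\ve/\sqrt n$, and a single-spike vector $\alpha\vw^\star$ when $\|\vw^\star\|_1=1$. For the $d$-step I would invoke the identity \eqref{tmp:equi}: evaluated at the optimal $r$ it yields $F(r^\star\vw^\star)=\tfrac{\rho}{2}\|\vx\|_2^2+G(\vw^\star)=\tfrac{\rho n\alpha^2}{2}+G(\vw^\star)$, whereas $F(\mathbf 0)=\tfrac{\rho}{2}\|\vx\|_2^2+h_2(\mathbf 0)=\tfrac{\rho n\alpha^2}{2}$. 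Hence $F(\mathbf 0)-F(r^\star\vw^\star)=-G(\vw^\star)$, so the trichotomy in \eqref{x-star} is controlled purely by the sign of $G(\vw^\star)$: it is negative when $\rho\alpha^2>2$ (so $\vx^\star=\alpha\ve$), positive when $\rho\alpha^2<2$ (so $\vx^\star=\mathbf 0$), and zero when $\rho\alpha^2=2$ (so $\vx^\star\in\{\mathbf 0,\alpha\|\vw^\star\|_1\vw^\star\}$ for each $\vw^\star\in\mathbb{S}^{n-1}_{+}$). Taking the union over all minimizers $\vw^\star$ in the borderline case gives $\{\mathbf 0\}\cup\{\alpha\|\vw\|_1\vw:\vw\in\mathbb{S}^{n-1}_{+}\}$, which is the claimed formula.

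I do not anticipate a genuine obstacle here; once Theorem~\ref{thm:main} is available this is largely bookkeeping. The only points requiring a little care are (i) pinning down the exact minimizer set of $\|\vw\|_1$ over $\mathbb{S}^{n-1}_{+}$ (including uniqueness of the maximizer) via the elementary two-sided bound $1\le\|\vw\|_1\le\sqrt n$, and (ii) in the $\rho\alpha^2=2$ case, correctly assembling the full proximity operator as the union of the sets \eqref{x-star} over \emph{all} optimal $\vw^\star$, so that no minimizer of $F$ is dropped or spuriously added.
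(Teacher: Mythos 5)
Your proof is correct and follows essentially the same route as the paper's: both reduce $G$ on $\mathbb{S}^{n-1}_{+}$ to $\tfrac12(2-\rho\alpha^2)\|\vw\|_1^2$, use the two-sided bound $1\le\|\vw\|_1\le\sqrt n$ with its equality cases to solve the $\vw$-step, and then read off the $d$-step from the sign of $G(\vw^\star)$. The only (welcome) difference is that you make explicit the union over all minimizers $\vw^\star$ in the borderline case $\rho\alpha^2=2$, which the paper handles more tersely.
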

\begin{proof}\ \ In the situation of $\vx =\alpha \ve$ for some $\alpha>0$, we have $\mA_{\rho,\vx}=(2-\rho \alpha^2) \ve\ve^\top$ from \eqref{def:A2}. The objective function of problem~\eqref{w-star} is $G(\vw)= \frac{1}{2} (2-\rho \alpha^2) \|\vw\|_1^2$. To investigate the minimal value of the above function on $\mathbb{S}_{+}^{n-1}$ and at which point the optimal is achieved, there are three different situations according to the value of $\rho \alpha^2$.

If $\rho \alpha^2>2$,  the minimal value of $\frac{1}{2} \vw^\top \mA_{\rho,\vx} \vw$ is achieved at $\vw$ which has the largest $\ell_1$ norm for $\vw \in \mathbb{S}_{+}^{n-1}$. Clearly, the optimal $\vw^\star$ must be $\frac{1}{\sqrt{n}}\ve$ and $G(\vw^\star)=\frac{1}{2} (2-\rho \alpha^2)n<0$.  Hence, $\mathrm{prox}_{\frac{1}{\rho} h_2} (\vx)=\{\langle \alpha \ve, \vw^\star \rangle \vw^\star\} = \{\alpha \ve\}$.

If $\rho \alpha^2=2$, then $G(\vw)=0$ for all $\vw \in \mathbb{S}^{n-1}_{+}$. Note that $\langle \alpha \ve, \vw \rangle \vw=\alpha \|\vw\|_1\vw$. Hence, $\mathrm{prox}_{\frac{1}{\rho} h_2} (\vx)=\{\mathbf{0}\}\cup \{\alpha \|\vw\|_1 \vw: \vw \in \mathbb{S}_{+}^{n-1}\}$.

Finally, if $\rho \alpha^2<2$, then the minimal value of $G$ on $\mathbb{S}_{+}^{n-1}$ is achieved at $\vw^\star \in \{\ve_i: i\in [n]\}$ and $G(\ve_i)=\frac{1}{2} (2-\rho \alpha^2)>0$ for all $i \in [n]$. Hence, $\mathrm{prox}_{\frac{1}{\rho} h_2} (\vx)=\{\mathbf{0}\}$.
\end{proof}


By Lemma~\ref{lem:properties}, we restrict our attention to the proximity operator of $h_2$ on $ \mathbb{R}_{\downarrow}^n$. The complete discussion is presented in the following two subsections. In the first subsection, we conduct a comprehensive analysis of the proximity operator of $h_2$ specially in $\mathbb{R}^2$. We delve into the intricacies of this operator, exploring its behavior and characteristics within this constrained domain. In the second subsection, we begin with investigating the properties of the eigenvectors of the matrix  $\mA_{\rho,\vx}$. The eigenvector corresponding to a negative eigenvalue plays a pivotal role in determining the solution in the $\vw$-step of the WRD procedure. By leveraging these properties effectively, we explicitly derive the proximity operator of $h_2$ over the entire space $\mathbb{R}^n$.

\subsection{Special case: the proximity operator of $h_2$ on $\mathbb{R}^2$}
The following result is about the proximity operator of $h_2$ on $\mathbb{R}^2_\downarrow$.
\begin{theorem}\label{thm:n=2}
For $\rho>0$ and  $\vx \in \mathbb{R}_{\downarrow}^2$ not a multiple of $\ve$, write
$$
\theta^\star= \left\{
               \begin{array}{ll}
                 \frac{1}{2}\arctan \left(\frac{-2(2-\rho x_1x_2)}{\rho(x_1^2-x_2^2)}\right), & \hbox{if $\rho x_1x_2 > 2$;} \\
                 0, & \hbox{if $\rho x_1x_2 \le 2$.}
               \end{array}
             \right.
$$
then, $\vw^\star=\begin{bmatrix}\cos \theta^\star & \sin \theta^\star\end{bmatrix}^\top$ is the optimal solution to problem \eqref{w-star}. Finally,
$$
\mathrm{prox}_{\frac{1}{\rho} h_2}(\vx)=\left\{
               \begin{array}{ll}
               \{\mathbf{0}\}, & \hbox{if $\rho x_1x_2 \le 2$ and $\rho x_1^2<2$;} \\
               \{\mathbf{0},x_1 \ve_1\}, & \hbox{if $\rho x_1x_2 \le 2$ and $\rho x_1^2=2$;} \\
               \{x_1 \ve_1\}, & \hbox{if $\rho x_1x_2 \le 2$ and $\rho x_1^2>2$;} \\
                \{\langle \vx, \vw^\star\rangle \vw^\star\}, & \hbox{if $\rho x_1x_2 > 2$.}
               \end{array}
             \right.
$$
\end{theorem}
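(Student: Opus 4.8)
The plan is to apply the WRD procedure of Theorem~\ref{thm:main} with $f=h_2$ and $n=2$, and to carry out the three steps explicitly. For $\vw\in\mathbb{S}^{1}_{+}$ one has $h_2(\vw)=\|\vw\|_1^2=\vw^\top\ve\ve^\top\vw$, so the objective $G$ in \eqref{def:G} becomes $G(\vw)=\frac{1}{2}\vw^\top\mA_{\rho,\vx}\vw$ with $\mA_{\rho,\vx}$ as in \eqref{def:A2}, and \eqref{w-star} is the quadratic program $\min\{\frac{1}{2}\vw^\top\mA_{\rho,\vx}\vw:\vw\in\mathbb{S}^{1}_{+}\}$. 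Throughout, $\vx$ not a multiple of $\ve$ together with $\vx\in\mathbb{R}^2_{\downarrow}$ forces $x_1>x_2\ge0$, which will be used repeatedly.

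For the $\vw$-step I would parametrize the arc $\mathbb{S}^{1}_{+}$ by $\vw(\theta)=[\cos\theta,\sin\theta]^\top$, $\theta\in[0,\frac{\pi}{2}]$. Expanding $\vw(\theta)^\top\mA_{\rho,\vx}\vw(\theta)$ via double-angle identities reduces the problem to minimizing $\phi(t)=a\cos t+b\sin t$ over $t=2\theta\in[0,\pi]$, where $a=-\frac{\rho}{2}(x_1^2-x_2^2)<0$ and $b=2-\rho x_1x_2$. If $b\ge0$ (i.e.\ $\rho x_1x_2\le2$), then since $\sin t\ge0$ on $[0,\pi]$ we get $\phi(t)\ge a\cos t\ge a=\phi(0)$, so the minimizer is $t=0$, i.e.\ $\theta^\star=0$ and $\vw^\star=\ve_1$. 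If $b<0$ (i.e.\ $\rho x_1x_2>2$), then $a,b<0$ so $b/a>0$, and writing $\phi(t)=-\sqrt{a^2+b^2}\,\cos(t-\gamma)$ with $\gamma=\arctan(b/a)\in(0,\frac{\pi}{2})$ shows the minimum over $[0,\pi]$ is attained at $t=\gamma$; hence $\theta^\star=\frac{1}{2}\arctan(b/a)=\frac{1}{2}\arctan\!\big(\frac{-2(2-\rho x_1x_2)}{\rho(x_1^2-x_2^2)}\big)$, which is precisely the stated formula. In either case $\vw^\star$ is unique and $\theta^\star\in[0,\frac{\pi}{4})$.

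The $r$-step gives $r^\star=\langle\vx,\vw^\star\rangle=x_1\cos\theta^\star+x_2\sin\theta^\star>0$ (using $x_1>0$ and $\cos\theta^\star>0$), so the only competitor of the origin is $\vx^\star=\langle\vx,\vw^\star\rangle\vw^\star$. For the $d$-step, setting $r=r^\star$ in identity \eqref{tmp:equi} kills its first term, so $F(\vx^\star)-F(\mathbf{0})=G(\vw^\star)$ (here $F(\mathbf{0})=\frac{\rho}{2}\|\vx\|_2^2$ because $h_2(\mathbf{0})=0$), and by \eqref{x-star} the proximity set is $\{\mathbf{0}\}$, $\{\mathbf{0},\vx^\star\}$, or $\{\vx^\star\}$ according to whether $G(\vw^\star)$ is positive, zero, or negative. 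When $\rho x_1x_2\le2$, $\vw^\star=\ve_1$ gives $G(\vw^\star)=1-\frac{\rho}{2}x_1^2$ and $\vx^\star=x_1\ve_1$, which yields the first three cases by comparing $\rho x_1^2$ with $2$. When $\rho x_1x_2>2$, then $\rho x_1^2\ge\rho x_1x_2>2$ since $x_1\ge x_2$, so $G(\vw^\star)\le G(\ve_1)=1-\frac{\rho}{2}x_1^2<0$, giving $\mathrm{prox}_{\frac{1}{\rho}h_2}(\vx)=\{\langle\vx,\vw^\star\rangle\vw^\star\}$, the last case.

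The main obstacle is the $\vw$-step: establishing that the constrained minimizer on the arc $\mathbb{S}^{1}_{+}$ is the stated $\theta^\star$, in particular getting the split at $\rho x_1x_2=2$ correct and confirming that in the regime $\rho x_1x_2>2$ the interior critical point is the global minimizer, not a maximizer. Everything downstream ($r$-step and the sign bookkeeping of the $d$-step) is mechanical once Theorem~\ref{thm:main} is available. As a consistency check aligned with the general development of Section~\ref{sec:h2}, one may note $\det\mA_{\rho,\vx}=(2-\rho x_1^2)(2-\rho x_2^2)-(2-\rho x_1x_2)^2=-2\rho(x_1-x_2)^2<0$, so $\mA_{\rho,\vx}$ has exactly one negative eigenvalue, and when $\rho x_1x_2>2$ the vector $\vw^\star$ above is a corresponding unit eigenvector that happens to lie in $\mathbb{S}^{1}_{+}$.
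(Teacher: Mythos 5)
Your proposal is correct and follows essentially the same route as the paper: invoke the WRD framework of Theorem~\ref{thm:main}, parametrize $\mathbb{S}^1_+$ by an angle, reduce the $\vw$-step to a one-dimensional trigonometric minimization via double-angle identities, and then carry out the same sign bookkeeping $F(\langle\vx,\vw^\star\rangle\vw^\star)-F(\mathbf{0})=G(\vw^\star)$ in the $d$-step. The only (cosmetic) difference is that you locate the minimizer of $a\cos t+b\sin t$ by rewriting it in amplitude--phase form on all of $t\in[0,\pi]$, whereas the paper first restricts to $\theta\in[0,\pi/4]$ and analyzes the sign of $Q'$; both yield the same $\theta^\star$ and the same case split at $\rho x_1x_2=2$.
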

\begin{proof}\ \ For $\vw \in \mathbb{S}^1_{+}$, we have
$$
G(\vw)=\frac{1}{2}(2-\rho x_1^2)w_1^2+\frac{1}{2}(2-\rho x_2^2)w_2^2+(2-\rho x_1x_2) w_1w_2.
$$
Write $w_1=\cos \theta$ and $w_2=\sin \theta$. The function $G$ can be written as
$$
G(\vw) =\frac{1}{2}(2-\rho x_2^2)-\frac{\rho}{4}(x_1^2-x_2^2)+\underbrace{\frac{\rho}{4}\rho(x_2^2-x_1^2)\cos(2 \theta)+\frac{1}{2}(2-\rho x_1 x_2)\sin (2\theta)}_{Q(\theta):=}.
$$
It is clear that minimizing $\vw^\top \mA_{\rho, \vx}\vw$ for $\vw \in \mathbb{S}^1_{+}$ is equivalent to minimizing the function $F(\theta)$ for $\theta \in [0, \pi/2]$.  By Lemma~\ref{lem:properties} and Theorem~\ref{thm:main}, we can restrict the parameter $\theta \in [0, \pi/4]$.

To investigate the global minimizer of $Q$ over the interval $\theta \in [0, \pi/4]$, we compute the derivative of $Q$ as follows
$$
Q'(\theta)=\frac{1}{2}\rho(x_1^2-x_2^2)\sin(2 \theta)+(2-\rho x_1 x_2)\cos (2\theta).
$$
We consider two cases. Case 1: If $2-\rho x_1 x_2\ge 0$, $Q'(\theta) \ge 0$ for $\theta \in [0, \pi/4]$. Hence, $Q$ achieves its global minimum at $\theta=0$. That is, $\vw^\star=\ve_1$. Case 2: If $2-\rho x_1 x_2<0$, $Q'$ has only one root $\theta^\star$ on $[0, \pi/4]$, given by $\theta^\star=\frac{1}{2}\arctan \left(\frac{-2(2-\rho x_1x_2)}{\rho(x_1^2-x_2^2)}\right)$. Due to $Q'(0)=2(2-\rho x_1 x_2)<0$ and $Q'(\pi/4)=\rho(x_1^2 - x_2^2)>0$. Hence, $Q$ achieves its global minimum at $\theta^\star$.  As a result, $\vw^\star=\begin{bmatrix}\cos \theta^\star & \sin \theta^\star\end{bmatrix}^\top$ is the optimal solution to problem \eqref{w-star}. This completes the $\vw$-step of the WRD procedure for $\mathrm{prox}_{\frac{1}{\rho} h_2}(\vx)$. The $r$-step follows immediately with $r^\star=\langle \vx, \vw^\star\rangle$.

Finally, for the $d$-step of the WRD procedure, we only need to know the sign of $G(\vw^\star)$. For Case 1, $G(\vw^\star)=G(\ve_1)=\frac{1}{2}(2-\rho x_1^2)$, which is positive if $\rho x_1^2<2$, zero if $\rho x_1^2=2$, and negative otherwise.  For Case 2, $G(\vw^\star)<G(\ve_1)=\frac{1}{2}(2-\rho x_1^2)<\frac{1}{2}(2-\rho x_1x_2)<0$. So, from the sign of  $G(\vw^\star)$, we conclude  $\mathrm{prox}_{\frac{1}{\rho} h_2}(\vx)$.
\end{proof}

To close this subsection, a detailed examination of the proximity operator of $h_2$ with index $1/\rho$ in $\mathbb{R}^2$ is conducted through visual representation via plots. In addition, the proximity operator of the $\ell_0$ norm with index $1/\rho$ is incorporated for comparative analysis, considering $h_2$ as an approximation of the $\ell_0$ norm. The ensuing visualizations aim to provide insights into the behavior and characteristics of the proximity operator for $h_2$ in comparison to the $\ell_0$ norm, enhancing our understanding of their respective properties in $\mathbb{R}^2$. As stipulated by Lemma~\ref{lem:properties}, we exclusively present the behavior on $\mathbb{R}^2_{\downarrow}$.

Figure~\ref{fig:Haar2-S-D}(a) illustrates the proximity operator of the $\ell_0$ norm. Following the guidance from  Example~\ref{example:Haar}, the set $\mathbb{R}^2_{\downarrow}$ is divided into  three distinct regions I, II, and III as depicted in Figure~\ref{fig:Haar2-S-D}(a) and defined as follows:
\begin{eqnarray*}
\mbox{Region I}&=&\{(x_1,x_2): 0\le x_2 \le x_1\le \sqrt{{2}/{\rho}}\},    \\
\mbox{Region II}&=&\{(x_1,x_2): 0\le x_2 \le \sqrt{{2}/{\rho}}<x_1\},    \\
\mbox{Region III}&=&\{(x_1,x_2): \sqrt{{2}/{\rho}}< x_2 \le x_1\}.
\end{eqnarray*}
On Region I, the $\mathrm{prox}_{\frac{1}{\rho} \|\cdot\|_0}$ at the corner $\sqrt{{2}/{\rho}}\ve$ is $\{\mathbf{0}, \sqrt{{2}/{\rho}}\ve, \sqrt{{2}/{\rho}}\ve_1, \sqrt{{2}/{\rho}}\ve_2\}$;  at each other point on the line $x_1=\sqrt{{2}/{\rho}}$, it is $\sqrt{{2}/{\rho}}\ve_1$; and at each other point in Region I, it is $\mathbf{0}$. On Region II,  $\mathrm{prox}_{\frac{1}{\rho}\|\cdot\|_0}$ at the point $(x_1, \sqrt{{2}/{\rho}})$ is $\{(x_1, \sqrt{{2}/{\rho}}), x_1\ve_1\}$ and at each other point $(x_1,x_2)$ is $x_1\ve_1$. On Region III, $\mathrm{prox}_{\frac{1}{\rho} \|\cdot\|_0}$ at each point is itself.

Figure~\ref{fig:Haar2-S-D}(b) showcases the proximity operator of the $h_2$ on the line $x_1=x_2$. The operator $\mathrm{prox}_{\frac{1}{\rho} h_2}$ at each point $\alpha\ve$ is $\mathbf{0}$ if  $\alpha<\sqrt{2/\rho}$ (blue dash-dot line);  $\{\mathbf{0}\}\cup\{\alpha \|\vw\|_1 \vw: \vw \in \mathbb{S}_{+}^{n-1}\}$ if $\alpha=\sqrt{2/\rho}$ (marked by the square); and $\alpha \ve$ itself if $\alpha>\sqrt{2/\rho}$ (magenta dot line).  Comparing with the proximity operator of the $\ell_0$ norm, the main difference is at the point $\sqrt{2/\rho}\ve$.

Figure~\ref{fig:Haar2-S-D}(c) exhibits the proximity operator of the $h_2$ on $\mathbb{R}^2_{\downarrow}$ excluding the line $x_1=x_2$. The set $\mathbb{R}^2_{\downarrow}$ partitions into three regions I, II, and III as shown in Figure~\ref{fig:Haar2-S-D}(c) and defined as follows:
\begin{eqnarray*}
\mbox{Region I}&=&\{(x_1,x_2): 0\le x_2 < x_1\le \sqrt{{2}/{\rho}}\},    \\
\mbox{Region II}&=&\{(x_1,x_2): 0\le x_2 \le {2}/{(\rho x_1)}, x_1>\sqrt{{2}/{\rho}}\},    \\
\mbox{Region III}&=&\{(x_1,x_2): {2}/{(\rho x_1)}< x_2 \le x_1, x_1>\sqrt{{2}/{\rho}}\}.
\end{eqnarray*}
On Region I, the $\mathrm{prox}_{\frac{1}{\rho} h_2}$ at each  point on the line $x_1=\sqrt{{2}/{\rho}}$ is $\{\mathbf{0},\sqrt{{2}/{\rho}}\ve_1\}$; the $\mathrm{prox}_{\frac{1}{\rho} h_2}$ at each  other point
is $\mathbf{0}$. On Region II, the $\mathrm{prox}_{\frac{1}{\rho} h_2}$ at each  point $(x_1,x_2)$ is $x_1\ve_1$ (see the red line). On Region III, the $\mathrm{prox}_{\frac{1}{\rho} h_2}$ at each  point $\vx$ is $\langle \vx, \vw^\star\rangle \vw^\star$, where $\vw^\star$ is given in Theorem~\ref{thm:n=2}. Specifically, results for three lines with their slopes 0.9 (green line), 0.5 (cyan line), and 0.3 (black line) are presented, and the $\mathrm{prox}_{\frac{1}{\rho} h_2}$ at these points are represented by dashed lines with corresponding colors.

\begin{figure}[ht]
\centering
\resizebox{\textwidth}{!}
{
\begin{tabular}{ccc}
\includegraphics[width=1.6in]{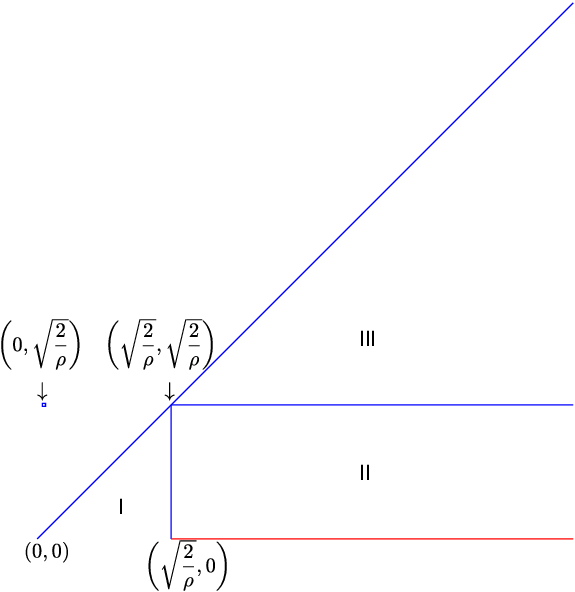}&
\includegraphics[width=1.6in]{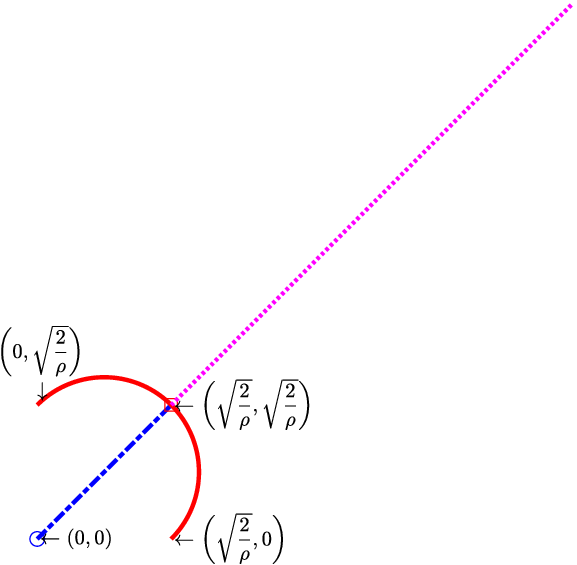}&
\includegraphics[width=1.6in]{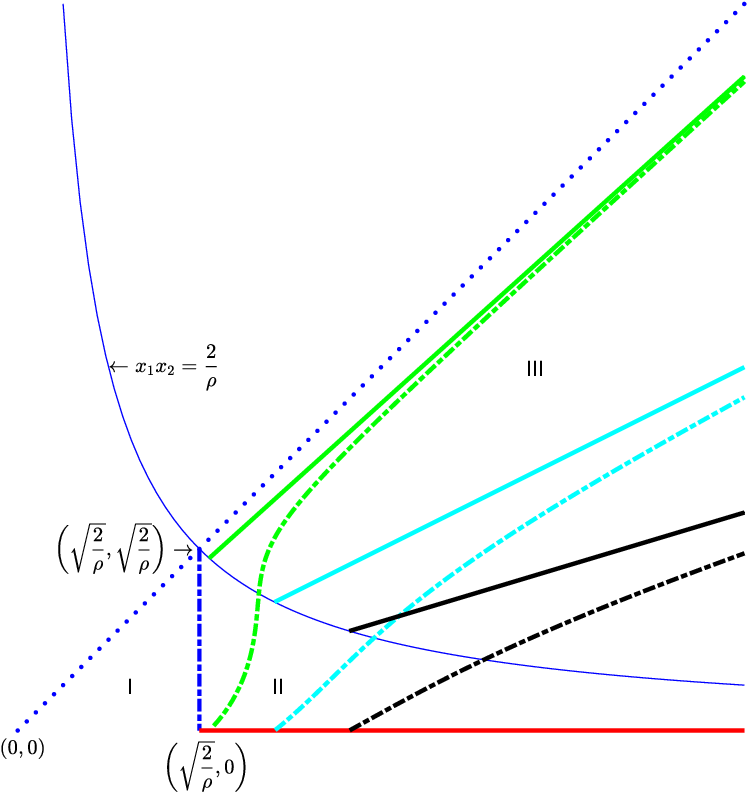}\\

(a)&(b)&(c)
\end{tabular}
}
\caption{The plots of the proximity operator in $\mathbb{R}^2_{\downarrow}$ for (a) the $\ell_0$ norm; (b) $h_2$ on the line with the slope $1$; and (c) $h_2$ on $\mathbb{R}^2_{\downarrow}$ excluding the line  with the slope $1$.}
\label{fig:Haar2-S-D}
\end{figure}

\subsection{General case: the proximity operator of $h_2$ on $\mathbb{R}^n$}

In the preceding subsection, we explored the determination of the proximity operator of  $h_2$ on $\mathbb{R}^2$ through the WRD procedure. The central concept involved parameterizing $\mathbb{S}^{1}_{+}$ using a single variable, simplifying the resulting problem in the $\vw$-step of the WRD procedure and facilitating ease of solution.  While $\mathbb{S}^{n-1}_{+}$ for $n>2$ can be parameterized by $(n-1)$ parameters, the ensuing problem in the $\vw$-step appears to be intricate for direct analysis. Consequently, alternative approaches must be considered to address and overcome the complexities associated with this scenario.

Given the pivotal role of the $\vw$-step in the WRD procedure, this subsection places particular emphasis on this phase. It is noteworthy that the objective function $G$ for the $\vw$-step is characterized as a quadratic form. In this context, we invoke the following two pertinent results.

\begin{lemma}[Theorem 1 in \cite{tao1996difference}] \label{lemma:Tao-An}
Consider the following optimization problem
\begin{equation} \label{eq:tao}
    \min\left\{\frac{1}{2}\vw^\top \mH \vw + \vb^\top \vw: \|\vw\|_2=r\right\},
\end{equation}
where $\mH$ is an $n \times n$ symmetric matrix, $\vb \in \mathbb{R}^n$ and $r$ a positive number. A vector $\vw^\star$ is a solution to this problem if and only if there is a real number $\lambda^\star$ such that (i) $\mH+\lambda^\star \mI$ is positive semi-definite; (ii) $(\mH+\lambda^\star \mI)\vw^\star=-\vb$; and $\|\vw^\star\|_2=r$. Such a $\lambda^\star$ is unique.
\end{lemma}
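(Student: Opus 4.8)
The plan is to prove the two implications separately; the reverse (sufficiency) direction is routine, while the forward (necessity) direction carries essentially all of the work, and the uniqueness of $\lambda^\star$ follows from a short comparison argument.

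For sufficiency, suppose a scalar $\lambda^\star$ satisfying (i)--(iii) is given, and write $q(\vw):=\tfrac12\vw^\top\mH\vw+\vb^\top\vw$. I would pass to the shifted quadratic $\psi(\vw):=\tfrac12\vw^\top(\mH+\lambda^\star\mI)\vw+\vb^\top\vw$. By (i), $\psi$ is convex on $\R^n$, and by (ii) its gradient $(\mH+\lambda^\star\mI)\vw^\star+\vb$ vanishes at $\vw^\star$, so $\vw^\star$ is a global minimizer of $\psi$ over all of $\R^n$. For any feasible $\vy$ with $\|\vy\|_2=r=\|\vw^\star\|_2$, the quadratic shift term $\tfrac{\lambda^\star}{2}\|\cdot\|_2^2$ takes the same value at $\vy$ and at $\vw^\star$, so the inequality $\psi(\vy)\ge\psi(\vw^\star)$ collapses exactly to $q(\vy)\ge q(\vw^\star)$, i.e.\ $\vw^\star$ solves \eqref{eq:tao}.

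For necessity, let $\vw^\star$ be a global solution of \eqref{eq:tao}; it exists since the feasible sphere is compact and $q$ continuous, and $\vw^\star\neq\vzero$ because $r>0$. The constraint $g(\vw):=\|\vw\|_2^2-r^2$ has nonvanishing gradient $2\vw$ on the feasible set, so the first-order Lagrange condition applies and furnishes $\lambda^\star\in\R$ with $\mH\vw^\star+\vb+\lambda^\star\vw^\star=\vzero$, which is (ii); item (iii) is just feasibility. The only substantive point is (i). The key algebraic step is to insert $\vb=-(\mH+\lambda^\star\mI)\vw^\star$ from (ii) into $q(\vy)-q(\vw^\star)$ and simplify, using $\|\vy\|_2^2=\|\vw^\star\|_2^2=r^2$ (so that $r^2-\la\vw^\star,\vy\ra=\tfrac12\|\vy-\vw^\star\|_2^2$), which yields the identity
\[
q(\vy)-q(\vw^\star)=\tfrac12(\vy-\vw^\star)^\top(\mH+\lambda^\star\mI)(\vy-\vw^\star)\qquad\text{for all }\vy\text{ with }\|\vy\|_2=r.
\]
Global optimality forces the right-hand side to be nonnegative over the entire set $\{\vy-\vw^\star:\|\vy\|_2=r\}$. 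It then remains to check that these difference directions are rich enough: for any unit vector $\vu$ with $\la\vw^\star,\vu\ra<0$, the point $\vy=\vw^\star-2\la\vw^\star,\vu\ra\vu$ lies on the sphere of radius $r$ and $\vy-\vw^\star$ is a positive multiple of $\vu$, giving $\vu^\top(\mH+\lambda^\star\mI)\vu\ge0$; since a quadratic form is even and every unit vector lies (up to sign) in the closure of the open hemisphere $\{\la\vw^\star,\cdot\ra<0\}$, a continuity/limiting argument delivers $\mH+\lambda^\star\mI\succeq0$. I expect this passage — the completion-of-squares identity together with the hemisphere-covering argument — to be the main obstacle, precisely because it is where \emph{global} (not merely local) optimality must be exploited, and where one must avoid settling for positive semidefiniteness only on the tangent space $\{\vd:\la\vw^\star,\vd\ra=0\}$.

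Finally, for uniqueness of $\lambda^\star$, suppose two solutions $\vw_1,\vw_2$ carried multipliers $\lambda_1<\lambda_2$. Then $\mH+\lambda_1\mI\succeq0$ forces $\mH+\lambda_2\mI=(\mH+\lambda_1\mI)+(\lambda_2-\lambda_1)\mI\succ0$. Applying the displayed identity centered at $\vw_2$ with $\vy=\vw_1$, and using that $q(\vw_1)=q(\vw_2)$ (equal optimal values), gives $0=\tfrac12(\vw_1-\vw_2)^\top(\mH+\lambda_2\mI)(\vw_1-\vw_2)$, hence $\vw_1=\vw_2$; but then (ii) together with $\vw^\star\neq\vzero$ pins down a unique multiplier, contradicting $\lambda_1<\lambda_2$. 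Therefore $\lambda^\star$ is unique.
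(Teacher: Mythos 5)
Your proof is correct. Note that the paper itself offers no proof of this lemma: it is quoted verbatim as Theorem~1 of Tao and An \cite{tao1996difference}, so there is no in-paper argument to compare against. Your argument is the standard self-contained one and all the delicate points check out. The sufficiency direction via the convex shift $\psi(\vw)=\tfrac12\vw^\top(\mH+\lambda^\star\mI)\vw+\vb^\top\vw$ is clean, since the added term $\tfrac{\lambda^\star}{2}\|\cdot\|_2^2$ is constant on the sphere. In the necessity direction, the completion-of-squares identity
\[
q(\vy)-q(\vw^\star)=\tfrac12(\vy-\vw^\star)^\top(\mH+\lambda^\star\mI)(\vy-\vw^\star),\qquad \|\vy\|_2=r,
\]
is verified by direct expansion using $\vb=-(\mH+\lambda^\star\mI)\vw^\star$ and $\|\vy\|_2^2=\|\vw^\star\|_2^2=r^2$, and you correctly identify the one step where a naive Lagrange-multiplier argument would fall short: second-order conditions only give semidefiniteness on the tangent space, whereas global optimality over the whole sphere is needed to get $\mH+\lambda^\star\mI\succeq 0$ on all of $\mathbb{R}^n$. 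Your reflection construction $\vy=\vw^\star-2\langle\vw^\star,\vu\rangle\vu$ does land on the sphere and produces $\vy-\vw^\star$ as a positive multiple of $\vu$ exactly on the open hemisphere $\langle\vw^\star,\vu\rangle<0$, and the evenness-plus-continuity closure argument legitimately extends the inequality $\vu^\top(\mH+\lambda^\star\mI)\vu\ge 0$ to all unit vectors. The uniqueness argument is also sound: if $\lambda_1<\lambda_2$ were both admissible, then $\mH+\lambda_2\mI\succ 0$, the identity centered at $\vw_2$ with equal optimal values forces $\vw_1=\vw_2$, and subtracting the two stationarity equations gives $(\lambda_1-\lambda_2)\vw_1=\vzero$ with $\vw_1\neq\vzero$, a contradiction. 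This matches the content of the cited theorem, including the global (not per-solution) uniqueness of the multiplier.
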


\begin{lemma}[\cite{martinez1994local,tao1996difference}] \label{lemma:Tao-An2}
Consider the optimization problem \eqref{eq:tao}. If $\vb$ is orthogonal to some eigenvector associated with the smallest eigenvalue, then there is no local-nonglobal minimum for \eqref{eq:tao}.
\end{lemma}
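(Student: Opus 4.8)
The plan is to argue by contradiction, combining the global-optimality characterization of Lemma~\ref{lemma:Tao-An} with the first- and second-order necessary conditions for a local minimizer of \eqref{eq:tao}. Write $\lambda_1 = \lambda_{\min}(\mH)$, and let $\vv$ be an eigenvector of $\mH$ with $\mH\vv = \lambda_1 \vv$ and $\langle \vb, \vv \rangle = 0$; such a $\vv$ exists by hypothesis. Suppose, toward a contradiction, that $\vw^\circ$ is a local minimizer of \eqref{eq:tao} that is not a global minimizer.

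First I would record the optimality conditions at $\vw^\circ$. The feasible set $\{\vw : \|\vw\|_2 = r\}$ is a smooth manifold whose tangent space at $\vw^\circ$ is $T = \{\vd \in \R^n : \langle \vd, \vw^\circ \rangle = 0\}$, so the first-order condition yields a Lagrange multiplier $\mu \in \R$ with $(\mH + \mu\mI)\vw^\circ = -\vb$; this $\mu$ is unique because $\|\vw^\circ\|_2 = r > 0$ forces $\vw^\circ \neq \mathbf{0}$. The second-order necessary condition states that $\langle \vd, (\mH + \mu\mI)\vd \rangle \ge 0$ for all $\vd \in T$. Now $\vw^\circ$ and $\mu$ already satisfy $(\mH + \mu\mI)\vw^\circ = -\vb$ and $\|\vw^\circ\|_2 = r$, which are condition (ii) and the norm constraint of Lemma~\ref{lemma:Tao-An}; if $\mH + \mu\mI$ were also positive semi-definite (condition (i)), the lemma would force $\vw^\circ$ to be a global minimizer, contrary to assumption. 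Hence $\mH + \mu\mI$ is not positive semi-definite, i.e.\ $\lambda_1 + \mu = \lambda_{\min}(\mH + \mu\mI) < 0$.

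The contradiction then follows in one line. Pairing the stationarity equation with $\vv$ and using $\mH\vv = \lambda_1 \vv$ gives $(\lambda_1 + \mu)\langle \vv, \vw^\circ \rangle = -\langle \vb, \vv \rangle = 0$; since $\lambda_1 + \mu \neq 0$, this forces $\langle \vv, \vw^\circ \rangle = 0$, so $\vv \in T$. Substituting $\vd = \vv$ into the second-order condition yields $0 \le \langle \vv, (\mH + \mu\mI)\vv \rangle = \lambda_1 + \mu < 0$, which is absurd. Therefore \eqref{eq:tao} has no local-non-global minimizer.

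The steps requiring care are routine rather than conceptual: confirming that the KKT multiplier at a local minimizer exists, is real, and is unique (all immediate once $\vw^\circ \neq \mathbf{0}$), and invoking the second-order necessary condition on the correct tangent space $T$. Once these are in place, the orthogonality hypothesis $\vb \perp \vv$ does all the work, and there is essentially no computation to carry out, so I would not expect any genuine obstacle here.
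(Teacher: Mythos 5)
Your proof is correct. Note first that the paper itself offers no proof of Lemma~\ref{lemma:Tao-An2}: it is imported by citation from Mart\'inez and from Tao--An, so there is no in-paper argument to compare against. Your argument is a clean, self-contained reconstruction of the classical one: you take the first-order condition $(\mH+\mu\mI)\vw^\circ=-\vb$ and the second-order necessary condition on the tangent space at a hypothetical local-nonglobal minimizer $\vw^\circ$, use the sufficiency direction of Lemma~\ref{lemma:Tao-An} to conclude $\lambda_{\min}(\mH+\mu\mI)=\lambda_1+\mu<0$, then pair the stationarity equation with the eigenvector $\vv$ satisfying $\langle\vb,\vv\rangle=0$ to force $\vv$ into the tangent space, where the second-order condition gives $0\le\langle\vv,(\mH+\mu\mI)\vv\rangle=(\lambda_1+\mu)\|\vv\|_2^2<0$. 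Every step checks out: LICQ holds on the sphere since $r>0$, the multiplier is unique because $\vw^\circ\neq\mathbf{0}$, and the argument covers both the case of a simple smallest eigenvalue and the degenerate case (where the hypothesis is automatic) uniformly. The only dependency worth flagging is that you lean on Lemma~\ref{lemma:Tao-An}, which the paper also only cites; within the logical structure of the paper that is legitimate, but a fully self-contained treatment would need to prove the sufficiency half of that lemma as well (a short argument via $\frac12(\vw-\vw^\circ)^\top(\mH+\mu\mI)(\vw-\vw^\circ)\ge 0$ on the sphere).
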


Note that both Lemma~\ref{lemma:Tao-An} and Lemma~\ref{lemma:Tao-An2} consider the quadratic optimization problems constrained on a sphere. However, our problem in $\vw$-step is restricted on $\mathbb{S}^{n-1}_{+}$.

To investigate the applicability of Lemma~\ref{lemma:Tao-An} for the optimization problem in the $\vw$-step, a crucial prerequisite is understanding the eigen-structure of the matrix $\mA_{\rho,\vx}$, as defined in \eqref{def:A2}. This matrix is the sum of two rank-1 matrices;  consequently, it possesses at most two non-zero eigenvalues. In order to delve into the eigen-structure of the matrix  $\mA_{\rho,\vx}$, let's introduce a set of notations:
\begin{eqnarray}
\Delta&:=&\left(\frac{\rho}{2}\|\vx\|_2^2+n\right)^2-2\rho \|\vx\|_1^2,\label{eq:Delta} \\
\underline{\alpha}&:=&\left(\frac{\rho}{2}\|\vx\|_2^2+n\right)-\sqrt{\Delta},  \label{eq:alpha-under} \\
\overline{\alpha}&:=&\left(\frac{\rho}{2}\|\vx\|_2^2+n\right)+\sqrt{\Delta}. \label{eq:alpha-over}\\
\underline{\lambda}&:=&2n-\overline{\alpha} \label{eq:lambda-under}\\
\overline{\lambda}&:=&2n-\underline{\alpha}  \label{eq:lambda-over}\\
\underline{\vw}&:=&\vx-\frac{\underline{\alpha}}{\rho \|\vx\|_1}\ve \label{eq:u-under}\\
\overline{\vw}&:=&\vx-\frac{\overline{\alpha}}{\rho \|\vx\|_1}\ve.  \label{eq:u-over}
\end{eqnarray}
Observations about the above notations are as follows: The inequality $\|\vx\|_1 \le \sqrt{n}\|\vx\|_2$ implies that $\Delta \ge \left(\frac{\rho}{2}\|\vx\|_2^2-n\right)^2$, and the inequality strictly holds if $\vx$ is not a multiple of $\ve$. This observation further implies that both $\underline{\alpha}$ and $\overline{\alpha}$ (given in \eqref{eq:alpha-under} and \eqref{eq:alpha-over}) are non-negative numbers. For $\overline{\lambda}$ (given in \eqref{eq:lambda-under}):
$$
\overline{\lambda}=2n-\underline{\alpha}= -\left(\frac{\rho}{2}\|\vx\|_2^2-n\right)+\sqrt{\Delta}\ge  -\left(\frac{\rho}{2}\|\vx\|_2^2-n\right)+\left|\frac{\rho}{2}\|\vx\|_2^2-n\right|\ge 0,
$$
where the equality holds if $\vx$ is a multiple of $\ve$. Similarly, for $\underline{\lambda}$ (given in \eqref{eq:lambda-over}):
$$
\underline{\lambda}=2n-\overline{\alpha}= -\left(\frac{\rho}{2}\|\vx\|_2^2-n\right) -\sqrt{\Delta} \le  -\left(\frac{\rho}{2}\|\vx\|_2^2-n\right)-\left|\frac{\rho}{2}\|\vx\|_2^2-n\right|\le 0,
$$
where the equality holds if $\vx$ is a multiple of $\ve$ again. Hence, if $\vx$ is not a multiple of $\ve$, then $\overline{\lambda}$ is positive, while $\underline{\lambda}$ is negative.

The subsequent result elucidates the eigenstructure of the matrix $\mA_{\rho, \vx}$.
\begin{proposition}\label{prop:A} Let $\mA_{\rho,\vx}$ be given in \eqref{def:A2} for $\rho>0$ and  $\vx \in \R_{+}^n$. Let $\underline{\alpha}$, $\overline{\alpha}$, $\underline{\lambda}$, $\overline{\lambda}$,  $\underline{\vw}$, and $\overline{\vw}$ be given by \eqref{eq:alpha-under}, \eqref{eq:alpha-over},  \eqref{eq:lambda-under}, \eqref{eq:lambda-over},   \eqref{eq:u-under}, and \eqref{eq:u-over}, respectively.  The following statements hold:
\begin{itemize}
\item[(i)] Assume $\vx =\alpha \ve$ for some $\alpha>0$. Then the matrix $\mA_{\rho,\vx}$ has only zero as its eigenvalues if $\rho \alpha^2=2$; or has $(2-\rho \alpha^2)n$ as its only non-zero eigenvalue with $\frac{1}{\sqrt{n}}\ve$ the corresponding eigenvector.
\item[(ii)] Assume $\vx \neq \alpha \ve$ for any $\alpha$. Then the matrix $\mA_{\rho,\vx}$ has only one positive eigenvalue $\overline{\lambda}$ and one negative eigenvalue $\underline{\lambda}$ given as
$\overline{\lambda}=2n-\underline{\alpha}$ and $\underline{\lambda}=2n-\overline{\alpha}$. The corresponding eigenvectors associated with $\overline{\lambda}$ and $\underline{\lambda}$ are
$\overline{\vw}=\vx-\frac{\overline{\alpha}}{\rho \|\vx\|_1}\ve$ and $\underline{\vw}=\vx-\frac{\underline{\alpha}}{\rho \|\vx\|_1}\ve$, respectively.
\end{itemize}
\end{proposition}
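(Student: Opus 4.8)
The plan is to dispose of case (i) by direct substitution and to concentrate all the work on case (ii), which I would attack by a rank argument followed by a one-parameter eigenvector ansatz. For case (i), substituting $\vx=\alpha\ve$ into \eqref{def:A2} gives $\mA_{\rho,\vx}=(2-\rho\alpha^2)\ve\ve^\top$, a scalar multiple of the rank-one matrix $\ve\ve^\top$; since $\ve\ve^\top\ve=n\ve$, the only candidate nonzero eigenvalue is $(2-\rho\alpha^2)n$ with eigenvector $\ve/\sqrt n$, and it vanishes exactly when $\rho\alpha^2=2$. This settles (i) immediately.

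For case (ii) I would first record the structural observation that $\mA_{\rho,\vx}$ is a difference of two rank-one matrices, so its range lies in $\mathrm{span}\{\ve,\vx\}$, which is two-dimensional because $\vx$ is not a multiple of $\ve$. As $\mA_{\rho,\vx}$ is symmetric, $\R^n$ decomposes orthogonally as $\ker\mA_{\rho,\vx}\oplus\mathrm{range}\,\mA_{\rho,\vx}$, so $\mA_{\rho,\vx}$ has $n-2$ zero eigenvalues and at most two nonzero ones, with every nonzero eigenvector lying in $\mathrm{span}\{\ve,\vx\}$; moreover $\ve$ itself is not an eigenvector (since $\mA_{\rho,\vx}\ve=2n\ve-\rho\|\vx\|_1\vx$ is not a multiple of $\ve$ unless $\vx$ is), so it suffices to search for eigenvectors of the form $\vw=\vx-c\ve$. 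A direct computation, using $\ve^\top\vx=\|\vx\|_1$ (valid since $\vx\in\R_+^n$), $\ve^\top\ve=n$ and $\vx^\top\vx=\|\vx\|_2^2$, gives
\[
\mA_{\rho,\vx}(\vx-c\ve)=\rho\bigl(c\|\vx\|_1-\|\vx\|_2^2\bigr)\vx+2\bigl(\|\vx\|_1-cn\bigr)\ve .
\]
Forcing this to equal $\lambda(\vx-c\ve)$ and matching the coefficients of the linearly independent vectors $\vx$ and $\ve$ yields $\lambda=\rho(c\|\vx\|_1-\|\vx\|_2^2)$ together with $2(\|\vx\|_1-cn)=-\lambda c$; eliminating $\lambda$ produces the quadratic
\[
\rho\|\vx\|_1\,c^2-\bigl(2n+\rho\|\vx\|_2^2\bigr)c+2\|\vx\|_1=0 .
\]
Its discriminant is $(2n+\rho\|\vx\|_2^2)^2-8\rho\|\vx\|_1^2=4\Delta$ with $\Delta$ as in \eqref{eq:Delta}, so the two roots are $c=\underline{\alpha}/(\rho\|\vx\|_1)$ and $c=\overline{\alpha}/(\rho\|\vx\|_1)$ with $\underline{\alpha},\overline{\alpha}$ from \eqref{eq:alpha-under}--\eqref{eq:alpha-over}; the associated eigenvectors are then exactly $\underline{\vw}$ and $\overline{\vw}$ of \eqref{eq:u-under}--\eqref{eq:u-over}, both nonzero since $\vx$ is not a multiple of $\ve$. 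Back-substituting into $\lambda=\rho c\|\vx\|_1-\rho\|\vx\|_2^2$ and simplifying gives the eigenvalues $\overline{\lambda}=2n-\underline{\alpha}$ and $\underline{\lambda}=2n-\overline{\alpha}$ of \eqref{eq:lambda-under}--\eqref{eq:lambda-over}. For the signs, I would compute the product $\overline{\lambda}\,\underline{\lambda}=(n-\tfrac{\rho}{2}\|\vx\|_2^2)^2-\Delta=2\rho(\|\vx\|_1^2-n\|\vx\|_2^2)$, which is strictly negative by the strict Cauchy--Schwarz inequality $\|\vx\|_1<\sqrt n\,\|\vx\|_2$ for $\vx$ not a multiple of $\ve$ (alternatively one may simply quote the sign estimates for $\overline{\lambda}$ and $\underline{\lambda}$ already derived in the paragraph preceding the proposition); hence exactly one of the two nonzero eigenvalues is positive and the other negative.

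No step here presents a genuine obstacle: once the ansatz $\vw=\vx-c\ve$ is justified by the rank/symmetry argument, everything reduces to elementary linear algebra. The only place demanding care is the bookkeeping that matches the discriminant of the $c$-quadratic with $4\Delta$ and its two roots, and the resulting value of $\lambda$, with the precise quantities $\underline{\alpha},\overline{\alpha},\underline{\lambda},\overline{\lambda}$ fixed in \eqref{eq:Delta}--\eqref{eq:u-over}; this is just algebraic verification, and the strict Cauchy--Schwarz inequality is what guarantees the two roots are distinct (so the eigenvectors are independent and the eigenvalue count is complete) and that the eigenvalues have opposite signs.
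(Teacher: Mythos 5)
Your proposal is correct and follows essentially the same route as the paper: the eigenvector ansatz $\vx-c\ve$, the resulting quadratic in $c$ (yours is exactly twice the paper's, with the same roots $\underline{\alpha}/(\rho\|\vx\|_1)$ and $\overline{\alpha}/(\rho\|\vx\|_1)$), and the identification of the eigenpairs; your justification of the ansatz via the rank/symmetry decomposition and your sign determination via the product $\overline{\lambda}\,\underline{\lambda}=2\rho(\|\vx\|_1^2-n\|\vx\|_2^2)<0$ are only cosmetic variants of what the paper does (it quotes $\mathrm{rank}\le 2$ and derives the signs of $\overline{\lambda},\underline{\lambda}$ separately just before the proposition).
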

\begin{proof}\ \
Item (i). In this case, $\mA_{\rho,\vx}=\left(2-\rho\alpha^2\right) \ve\ve^\top$. Clearly, $\mA_{\rho,\vx}=\mathbf{0}$ if $\rho\alpha^2=2$, so $\mA_{\rho,\vx}$ has only zero as its eigenvalues. Otherwise, $\mA_{\rho,\vx}$ has $\left(2-\rho\alpha^2\right)n$ as its only non-zero eigenvalue with the corresponding eigenvector $\frac{1}{\sqrt{n}}\ve$.

Item (ii). From $\mathrm{rank}(\mA_{\rho,\vx}) \le \mathrm{rank}(2\ve\ve^\top)+\mathrm{rank}({\rho}\vx\vx^\top)=2$, the matrix $\mA_{\rho,\vx}$ has at most two nonzero eigenvalues which will be found as follows.
For any $\lambda$, a direct computation gives
$$
\mA_{\rho,\vx}(\vx-\lambda\ve)=-{\rho}(\|\vx\|_2^2-\|\vx\|_1\lambda)\vx+2(\|\vx\|_1-n\lambda)\ve.
$$
If the vector $\vx-\lambda\ve$ is the eigenvector of $\mA_{\rho,\vx}$, then the equation
$$
-{\rho}(\|\vx\|_2^2-\|\vx\|_1\lambda)=\frac{2(\|\vx\|_1-n\lambda)}{-\lambda}
$$
holds for some $\lambda$ and the value $\frac{2(\|\vx\|_1-n\lambda)}{-\lambda}$ is the associated eigenvalue. Simplifying the above equation leads to the following quadratic equation
$$
\frac{\rho}{2}\|\vx\|_1 \lambda^2-\left(\frac{\rho}{2}\|\vx\|_2^2+n\right)\lambda+\|\vx\|_1=0.
$$
The discriminant of the quadratic equation with variable $\lambda$ is $\Delta$ given by \eqref{eq:Delta}. Since $\|\vx\|_1^2< n \|\vx\|_2^2$, we have
$\Delta>(\frac{\rho}{2}\|\vx\|_2^2-n)^2 \ge 0$.  Hence, the above quadratic equation has two real roots $\lambda_1=\frac{\overline{\alpha}}{\rho \|\vx\|_1}$ and $\lambda_2=\frac{\underline{\alpha}}{\rho \|\vx\|_1}$. Substituting $\lambda_1$ and $\lambda_2$ into $\frac{2(\|\vx\|_1-n\lambda)}{-\lambda}$ yield two eigenvalues $\overline{\lambda}$ and $\underline{\lambda}$ of $\mA_{\rho,\vx}$, respectively. In this case, we know that $\overline{\lambda}> 0$ and $\underline{\lambda}< 0$. The eigenvectors corresponding to $\overline{\lambda}$ and $\underline{\lambda}$ are $\overline{\vw}$ and $\underline{\vw}$, respectively.
\end{proof}

We remark that for $\vx \in \mathbb{R}^n_{\downarrow}$, the largest component of $\underline{\vw}$ in \eqref{eq:u-under}, that is the first component $x_1-\frac{\underline{\alpha}}{\rho \|\vx\|_1}$, is always non-negative. Actually, by \eqref{eq:Delta}, \eqref{eq:alpha-under}, and \eqref{eq:u-under}, we have
\begin{eqnarray*}
x_1-\frac{\underline{\alpha}}{\rho \|\vx\|_1}&=&x_1-\frac{2\rho \|\vx\|_1^2}{\rho \|\vx\|_1 \left[\left(\frac{\rho}{2}\|\vx\|_2^2+n\right)+\sqrt{\left(\frac{\rho}{2}\|\vx\|_2^2+n\right)^2-2\rho \|\vx\|_1^2}\right]} \\
&\ge& x_1-\frac{2\|\vx\|_1}{\left(\frac{\rho}{2}\|\vx\|_2^2+n\right)+\left|\frac{\rho}{2}\|\vx\|_2^2-n\right|} =\left\{
         \begin{array}{ll}
          x_1- \frac{2\|\vx\|_1}{\rho \|\vx\|_2^2}, & \hbox{if $\rho \|\vx\|_2^2 \ge 2n$;} \\
          x_1- \frac{2\|\vx\|_1}{2n}, & \hbox{if $\rho \|\vx\|_2^2 < 2n$,}
         \end{array}
       \right.
\end{eqnarray*}
which is always non-negative. This derivation also indicates that $x_1-\frac{\underline{\alpha}}{\rho \|\vx\|_1}>0$ always holds if $\vx$ is not parallel to $\ve$.

If the last component $x_n-\frac{\underline{\alpha}}{\rho \|\vx\|_1}$ of $\underline{\vw}$ in \eqref{eq:u-under} is positive, we have the following result.
\begin{theorem}\label{thm:positive-comp}
For $\rho>0$ and  $\vx \in \Rnd$ not being a multiple of $\ve$, if $x_n > \frac{\underline{\alpha}}{\rho \|\vx\|_1}$, then the vector  $\vw^\star:=\frac{\underline{\vw}}{\|\underline{\vw}\|_2}$ with $\underline{\vw}=\vx-\frac{\underline{\alpha}}{\rho \|\vx\|_1}\ve$ is the solution to the optimization problem \eqref{w-star}. Furthermore, we have
$$
\mathrm{prox}_{\frac{1}{\rho} h_2}(\vx)=\left\langle \vx, \vw^\star\right \rangle \vw^\star.
$$
\end{theorem}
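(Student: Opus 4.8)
The plan is to execute the three WRD steps. The only substantive work is the $\vw$-step: I will use Proposition~\ref{prop:A}(ii) to identify the minimizer of $G$ over the \emph{whole} unit sphere via the Rayleigh--Ritz principle, and then observe that the hypothesis $x_n>\frac{\underline{\alpha}}{\rho\|\vx\|_1}$ forces that minimizer to lie inside the feasible set $\mathbb{S}^{n-1}_{+}$, so it is automatically the constrained minimizer. The $r$-step is immediate and the $d$-step reduces to checking the sign of $\underline{\lambda}$.

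\emph{The $\vw$-step.} Recall that for $\|\vw\|_2=1$ one has $G(\vw)=\tfrac12\vw^\top\mA_{\rho,\vx}\vw$. Since $\vx$ is not a multiple of $\ve$, Proposition~\ref{prop:A}(ii) gives the full spectrum of $\mA_{\rho,\vx}$: exactly one positive eigenvalue $\overline{\lambda}$, exactly one negative eigenvalue $\underline{\lambda}<0$ (the remaining $n-2$ eigenvalues being $0$, since $\mathrm{rank}(\mA_{\rho,\vx})\le2$), and $\underline{\lambda}$ is simple with eigenvector $\underline{\vw}=\vx-\frac{\underline{\alpha}}{\rho\|\vx\|_1}\ve$. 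Hence $\underline{\lambda}$ is the smallest eigenvalue of $\mA_{\rho,\vx}$, and by the variational characterization $\min\{G(\vw):\|\vw\|_2=1\}=\tfrac12\underline{\lambda}$, attained exactly at the two unit eigenvectors $\pm\,\underline{\vw}/\|\underline{\vw}\|_2$. Now the hypothesis enters: because $\vx\in\Rnd$, every coordinate of $\underline{\vw}$ satisfies $x_j-\frac{\underline{\alpha}}{\rho\|\vx\|_1}\ge x_n-\frac{\underline{\alpha}}{\rho\|\vx\|_1}>0$ (the remark preceding the theorem already supplies positivity of the largest coordinate). Thus $\underline{\vw}$ lies in the interior of $\mathbb{R}^n_{+}$, so $\vw^\star:=\underline{\vw}/\|\underline{\vw}\|_2\in\mathbb{S}^{n-1}_{+}$ while $-\vw^\star\notin\mathbb{S}^{n-1}_{+}$. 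Since $\mathbb{S}^{n-1}_{+}\subseteq\mathbb{S}^{n-1}$ and the global sphere-minimizer $\vw^\star$ already lies in the smaller set, it is the unique solution of~\eqref{w-star}. The $r$-step then gives $r^\star=\langle\vx,\vw^\star\rangle$.

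\emph{The $d$-step.} Using the decomposition~\eqref{tmp:equi} with $r=\langle\vx,\vw^\star\rangle$, and $(\vw^\star)^\top\mA_{\rho,\vx}\vw^\star=\underline{\lambda}\,\|\vw^\star\|_2^2=\underline{\lambda}$, we obtain $F(\langle\vx,\vw^\star\rangle\vw^\star)=\tfrac{\rho}{2}\|\vx\|_2^2+G(\vw^\star)=\tfrac{\rho}{2}\|\vx\|_2^2+\tfrac12\underline{\lambda}$, whereas $F(\mathbf{0})=\tfrac{\rho}{2}\|\vx\|_2^2+h_2(\mathbf{0})=\tfrac{\rho}{2}\|\vx\|_2^2$. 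Since $\underline{\lambda}<0$, we get $F(\langle\vx,\vw^\star\rangle\vw^\star)<F(\mathbf{0})$, so the third branch of~\eqref{x-star} in Theorem~\ref{thm:main} yields $\mathrm{prox}_{\frac{1}{\rho}h_2}(\vx)=\{\langle\vx,\vw^\star\rangle\vw^\star\}$.

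\emph{Where the difficulty lies.} The spectral bookkeeping and the sign of $\underline{\lambda}$ are handed to us by Proposition~\ref{prop:A} and the preceding remark, and the $d$-step is a one-line computation; the genuinely load-bearing point is the feasibility argument, namely that positivity of the \emph{smallest} coordinate of $\underline{\vw}$ is precisely what is needed to drop the global-on-sphere minimizer into the partial sphere $\mathbb{S}^{n-1}_{+}$. Without this assumption the unconstrained minimizer can have negative entries, the constrained minimizer then lies on the relative boundary of $\mathbb{S}^{n-1}_{+}$, and the simple eigenvector no longer solves~\eqref{w-star} --- that complementary case needs a separate treatment. So I expect the careful use of the hypothesis (together with the monotonicity $\vx\in\Rnd$ that makes it suffice to control only the last coordinate) to be the conceptual crux, even though each individual estimate is short.
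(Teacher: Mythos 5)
Your proof is correct and follows essentially the same route as the paper's: identify the minimizer of $G$ over the full unit sphere as the unit eigenvector of $\mA_{\rho,\vx}$ for its negative (smallest) eigenvalue, use the hypothesis $x_n>\underline{\alpha}/(\rho\|\vx\|_1)$ together with $\vx\in\Rnd$ to place that eigenvector in $\mathbb{S}^{n-1}_{+}$, and settle the $d$-step from $G(\vw^\star)=\tfrac12\underline{\lambda}<0$. The only cosmetic difference is that you invoke the Rayleigh--Ritz principle where the paper cites Lemma~\ref{lemma:Tao-An} with $\vb=\mathbf{0}$ (for a homogeneous quadratic these yield the same characterization), and your version even fixes a harmless factor-of-$\tfrac12$ slip in the paper's line $G(\vw^\star)=\underline{\lambda}$.
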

\begin{proof}\ \ From  $\vx \in \Rnd$ not being a multiple of $\ve$, $x_n > \frac{\underline{\alpha}}{\rho \|\vx\|_1}$, and $\underline{\alpha}$ being nonnegative, we know that $\mathbf{0} \neq \underline{\vw} \in \Rnd$ and $\vw^\star \in \Rnd \cap \mathbb{S}^{n-1}_{+}$. By identifying $\mA_{\rho, \vx}$, $\mathbf{0}$, and $1$ as $\mH$, $\vb$, and $r$ in \eqref{eq:tao} of Lemma~\ref{lemma:Tao-An}, respectively, we know that $\mA_{\rho, \vx}-\underline{\lambda}\mI$ is  positive semi-definite and $(\mA_{\rho, \vx}-\underline{\lambda}\mI)\underline{\vw}=\mathbf{0}$ from the item (ii) of Proposition~\ref{prop:A}. Therefore, the unit vector $\vw^\star \in \R_{+}^n$ is the solution to  the problem \eqref{w-star} from Lemma~\ref{lemma:Tao-An}.

To determine $\mathrm{prox}_{\frac{1}{\rho} h_2}(\vx)$, we notice that the first entries of both $\vx$ and $\vw^\star$ are positive, hence $\langle \vx, \vw^\star\rangle>0$. Furthermore, since  $G(\vw^\star)=\underline{\lambda}<0$ for $G$ given in \eqref{def:G}, we conclude that $F(\vw^\star)<F(\mathbf{0})$ for $F$ given in \eqref{def:F}. This completes the proof of this theorem.
\end{proof}

There are two remarks on Theorem~\ref{thm:positive-comp}. The first one is that under the conditions of this theorem, simplifying the expression  $\left\langle \vx, \vw^\star\right \rangle \vw^\star$ leads to
$$
\mathrm{prox}_{\frac{1}{\rho} h_2}(\vx)=\frac{\|\vx\|_2^2-\frac{\underline{\alpha}}{\rho}}{\|\vx\|_2^2-2\frac{\underline{\alpha}}{\rho}+\frac{n \underline{\alpha}^2}{\rho^2 \|\vx\|_1^2}} \left(\vx-\frac{\underline{\alpha}}{\rho \|\vx\|_1}\ve\right).
$$
The second remark concerns the consistency of Theorem~\ref{thm:positive-comp} in $\mathbb{R}^2_{\downarrow}$ with Theorem~\ref{thm:n=2}. That is, if the condition $x_2 > \frac{\underline{\alpha}}{\rho \|\vx\|_1}$ holds, then $\rho x_1 x_2 >2$ and $\vw^\star$ in both Theorem~\ref{thm:positive-comp} and Theorem~\ref{thm:n=2} are identical. To this end, and to have simpler expressions, let us denote
$$
a:=\sqrt{\left(\frac{\rho}{2}\|\vx\|_2^2+2\right)^2-2\rho\|\vx\|_1^2} \quad \mbox{and} \quad b:=\left(\frac{\rho}{2}\|\vx\|_2^2+2\right) - \rho x_2\|\vx\|_1.
$$
By \eqref{eq:alpha-under}, the condition $x_2 > \frac{\underline{\alpha}}{\rho \|\vx\|_1}$  implies $a>b$. We claim that $a>|b|$. If this claim does not hold, then $b$ must be negative and $0<a \le |b|$. Squaring this inequality and simplifying it yield  $\rho x_1 x_2 \le 2$. In this situation, $b=\frac{\rho}{2}(x_1^2 - x_2^2) + 2 - \rho x_1 x_2>0$. This contradicts the negativeness of $b$. Hence,  $a>|b|$. Similarly, squaring this inequality and simplifying it leads to  $\rho x_1 x_2 > 2$.

Further, defining $\beta := 2\theta^\star=\arctan \left(\frac{-2(2-\rho x_1x_2)}{\rho(x_1^2-x_2^2)}\right)$ and with the help of the identity
$$
\frac{\cos \theta^\star}{\sin \theta^\star} =\sqrt{1+\frac{1}{\tan^2 \beta}} + \frac{1}{\tan \beta},
$$
we can show, after some simplifications, that the ratios of the entries of $\vw^\star$ in both Theorem~\ref{thm:positive-comp} and Theorem~\ref{thm:n=2} are the same:
$$
\frac{\cos \theta^\star}{\sin \theta^\star} = \frac{\rho x_1\|\vx\|_1-\underline{\alpha}}{\rho x_2\|\vx\|_1-\underline{\alpha}},
$$
which means that $\vw^\star$ in both Theorem~\ref{thm:positive-comp} and Theorem~\ref{thm:n=2} are identical.

The next result discusses the property of the solution from the $\vw$-step under the condition that the last component $x_n-\frac{\underline{\alpha}}{\rho \|\vx\|_1}$ of $\underline{\vw}$ in \eqref{eq:u-under} is non-positive.
\begin{theorem}
For $\rho>0$ and $\vx\in \Rnd$, let $\vw^\star$ be the optimal solution to the optimization problem \eqref{w-star}. If $x_n\le \frac{\ul\alpha}{\rho\|\vx\|_1}$, then $\left(\vw^\star\right)_n=0$.
\end{theorem}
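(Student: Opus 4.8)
The plan is to argue by contradiction, after two reductions that place us in the case where $\vw^\star$ is strictly positive in every coordinate, so that the eigenstructure of $\mA_{\rho,\vx}$ (Proposition~\ref{prop:A}) and Lemma~\ref{lemma:Tao-An2} apply. Throughout I would assume $\vx$ is not a multiple of $\ve$; the case $\vx=\alpha\ve$ is already covered by Theorem~\ref{thm:positive-multiple-1} (there the hypothesis $x_n\le\ul\alpha/(\rho\|\vx\|_1)$ forces $\rho\alpha^2\le 2$, and one may take $\vw^\star=\ve_1$, which satisfies $(\vw^\star)_n=0$). Recall from \eqref{def:G} and \eqref{def:A2} that for $h_2$ the objective of \eqref{w-star} is the quadratic form $G(\vw)=\frac{1}{2}\vw^\top\mA_{\rho,\vx}\vw$, which on $\mathbb{S}^{n-1}_{+}$ equals $\|\vw\|_1^2-\frac{\rho}{2}\langle\vx,\vw\rangle^2$. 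Since $\vx\in\Rnd$, the first reduction is to take $\vw^\star\in\Rnd$: replacing an optimal $\vw$ by its decreasing rearrangement leaves $\|\vw\|_1$ and $\|\vw\|_2$ unchanged and, by the rearrangement inequality, cannot decrease $\langle\vx,\vw\rangle\ge 0$, hence cannot increase $G$. Then $\mathrm{supp}(\vw^\star)$ is a prefix $[k]$ of $[n]$; if $k<n$ then $(\vw^\star)_n=0$ and we are finished, so suppose for contradiction that $k=n$, i.e.\ $(\vw^\star)_i>0$ for every $i$.

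In this case all the constraints $w_i\ge 0$ are inactive at $\vw^\star$, so a relative neighbourhood of $\vw^\star$ in $\mathbb{S}^{n-1}$ is contained in $\mathbb{S}^{n-1}_{+}$; hence $\vw^\star$ is a \emph{local} minimizer of $\vw\mapsto\frac{1}{2}\vw^\top\mA_{\rho,\vx}\vw$ on the whole sphere $\mathbb{S}^{n-1}$. I would then invoke Lemma~\ref{lemma:Tao-An2} with $\mH=\mA_{\rho,\vx}$, $\vb=\vzero$, $r=1$: since $\vzero$ is trivially orthogonal to an eigenvector for the smallest eigenvalue, problem \eqref{eq:tao} has no local-nonglobal minimum, so $\vw^\star$ is in fact a global minimizer over $\mathbb{S}^{n-1}$, i.e.\ a unit eigenvector of $\mA_{\rho,\vx}$ for its smallest eigenvalue. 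By Proposition~\ref{prop:A}(ii) (valid since $\vx\ne\alpha\ve$) that smallest eigenvalue is the simple negative eigenvalue $\ul\lambda$, whose eigenspace is spanned by $\underline{\vw}=\vx-\frac{\ul\alpha}{\rho\|\vx\|_1}\ve$; therefore $\vw^\star=c\,\underline{\vw}$ for some scalar $c\ne 0$.

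To close the argument I would use the remark following Proposition~\ref{prop:A}: since $\vx$ is not parallel to $\ve$, the first (and largest) entry of $\underline{\vw}$, namely $x_1-\frac{\ul\alpha}{\rho\|\vx\|_1}$, is strictly positive. As $(\vw^\star)_1>0$ and $\vw^\star=c\,\underline{\vw}$, this forces $c>0$; but then the hypothesis $x_n\le\frac{\ul\alpha}{\rho\|\vx\|_1}$ yields $(\vw^\star)_n=c\bigl(x_n-\frac{\ul\alpha}{\rho\|\vx\|_1}\bigr)\le 0$, contradicting $(\vw^\star)_n>0$. Hence $k<n$, and $(\vw^\star)_n=0$.

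I expect the main obstacles to be the two reductions rather than the eigen-bookkeeping. One is justifying that an optimal solution of \eqref{w-star} may be taken in $\Rnd$ and correctly disposing of the degenerate case $\vx=\alpha\ve$ (where the minimizer need not be unique, so the statement has to be read as ``for a suitable optimal $\vw^\star$''). The other is the routine but easy-to-botch step of upgrading ``$\vw^\star$ minimizes $G$ over $\mathbb{S}^{n-1}_{+}$'' to ``$\vw^\star$ is a local minimizer over the unconstrained sphere $\mathbb{S}^{n-1}$'', which is exactly what makes Lemma~\ref{lemma:Tao-An2} usable. A more laborious alternative would work with $S=\mathrm{supp}(\vw^\star)$ and the principal submatrix $\mA_{SS}$ directly, but that requires comparing $\ul\alpha_S/\|\vx_S\|_1$ with $\ul\alpha/\|\vx\|_1$ and a separate treatment of blocks of equal entries of $\vx$; imposing $\vw^\star\in\Rnd$ from the start (so $S$ is a prefix, and $S=[n]$ in the surviving case) is what keeps everything clean.
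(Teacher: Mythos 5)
Your proof is correct and follows essentially the same route as the paper's: assume every entry of $\vw^\star$ is positive, upgrade $\vw^\star$ to a local minimizer on the full sphere $\mathbb{S}^{n-1}$, invoke Lemma~\ref{lemma:Tao-An2} with $\vb=\vzero$ to conclude it is a global minimizer there, identify it (via Proposition~\ref{prop:A}) with a multiple of the eigenvector $\ul{\vw}$ for the negative eigenvalue, and derive a contradiction from the sign of its last entry. Your two preliminary reductions --- dispatching the degenerate case $\vx=\alpha\ve$ and sorting so that $\vw^\star\in\Rnd$, which makes a vanishing entry necessarily the last one --- do not appear in the paper's proof but genuinely tighten it, since the paper's contradiction by itself only shows that \emph{some} entry of $\vw^\star$ is zero.
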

\begin{proof}\ \ Suppose that all components of $\vw^\star$ are positive, Then $\vw^\star \in \Ss_+^{n-1} \subset \mathbb{S}^{n-1}$. So $\vw^\star$ is a local minimizer of
\begin{equation} \label{eq:tao2}
    \min\left\{\frac{1}{2}\vw^\top\mA_{\rho, \vx}\vw: \vw\in \Ss^{n-1}\right\}.
\end{equation}
As the zero vector is orthogonal any vector, it naturally follows that it is orthogonal to $\underline{\vw}$, the eigenvector of $\mA_{\rho,\vx}$ associated with the negative eigenvalue $\underline{\lambda}$. By Lemma \ref{lemma:Tao-An2}, there is no local-nonglobal minimum for \eqref{eq:tao2}. Hence  $\vw^\star$ is the global minimizer of problem~\eqref{eq:tao2}. As a result, $\vw^\star  =  \frac{\underline{\vw}}{\|\underline{\vw}\|_2}$,
whose last component is less than $0$ by the given condition $x_n\le \frac{\ul\alpha}{\rho\|\vx\|_1}$. This completes our proof.
\end{proof}

To have an efficient approach for computing the proximity operator of $h_2$, let us access the entries of the matrix $\mA_{\rho, \vx}$, which are
$$
\mA_{\rho, \vx}=\begin{bmatrix}
    2-\rho x_1^2 & 2-\rho x_1 x_2& \cdots &2-\rho x_1 x_n \\
    2-\rho x_2 x_1 & 2-\rho x_2^2& \cdots &2-\rho x_2 x_n \\
    \vdots&\vdots&\ddots&\vdots\\
    2-\rho x_n x_1 & 2-\rho x_n x_2& \cdots &2-\rho x_n^2
\end{bmatrix}.
$$
Since $\vx \in \mathbb{R}^{n}_{\downarrow}$, the numbers of entries in each row, each column, and each diagonal are increasing corresponding to the indices of the entries. Based on the structure of this matrix, we define a function $\mu$ that maps every pair $(\rho, \vx)$ with $\rho$ and $\vx \in \mathbb{R}^n_{\downarrow}$ to a non-negative integer as follows:
\begin{equation}\label{eq:mu}
\mu(\rho, \vx):=\left\{
                 \begin{array}{ll}
                   0, & \hbox{if $(\mA_{\rho,\vx})_{11} \ge 0$;} \\
                   k, & \hbox{if there exists $1\le k < n$ such that $(\mA_{\rho,\vx})_{1k} < 0$ and $(\mA_{\rho,\vx})_{1(k+1)} \ge  0$;} \\
                   n, & \hbox{if $(\mA_{\rho,\vx})_{1n}  < 0$.}
                 \end{array}
               \right.
\end{equation}
This number $\mu(\rho, \vx)$ counts how many negative components in the vector $2\ve-\rho x_1 \vx$. As $2\ve-\rho x_1 \vx$ is the first column of the matrix $\mA_{\rho,\vx}$, with the number $\mu(\rho, \vx)$, we consider three cases for the matrix $\mA_{\rho,\vx}$ in the following theorem.

\begin{theorem}\label{thm:Main}
Let $\rho>0$ and let $\vx \in \mathbb{R}_{\downarrow}^n$. Set $k=\mu(\rho, \vx)$. Then the following statements hold:
\begin{itemize}
\item[(i)] If $k=0$, then $\ve_1$ is the global minimizer to the optimization problem \eqref{w-star};
\item[(ii)] If $1\le k \le n$, then the vector
$$
\begin{bmatrix}\tilde{\vw}^\star \\ \mathbf{0}_{(n-k)\times 1}\end{bmatrix}
$$
is the global minimizer to the optimization problem \eqref{w-star}, where $\tilde{\vw}^\star$ is the minimizer of the problem
$$
\min_{\vw \in \mathbb{S}^{k-1}_{+}}\frac{1}{2} \vw^\top \mA_{\rho,\vx_{[k]}} \vw.
$$
Here $\mA_{\rho,\vx_{[k]}}$ is the $k$-order leading principal submatrix of $\mA_{\rho,\vx}$ obtained by removing its last $(n-k)$ rows and columns.
\end{itemize}
\end{theorem}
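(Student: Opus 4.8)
The plan is to read off the first-order optimality conditions for \eqref{w-star} and combine them with a monotonicity that $\vx\in\mathbb{R}^n_{\downarrow}$ forces on $\mA_{\rho,\vx}\vw$. Throughout write $G(\vw)=\frac{1}{2}\vw^\top\mA_{\rho,\vx}\vw=\|\vw\|_1^2-\frac{\rho}{2}\langle\vx,\vw\rangle^2$ for $\vw\in\mathbb{S}^{n-1}_{+}$, and note that, since $(\mA_{\rho,\vx})_{ij}=2-\rho x_ix_j$, one has
$$
(\mA_{\rho,\vx}\vw)_i=2\|\vw\|_1-\rho\,x_i\langle\vx,\vw\rangle\qquad\text{for }\vw\in\mathbb{S}^{n-1}_{+}.
$$
Because $\langle\vx,\vw\rangle\ge 0$ for $\vw\ge\mathbf{0}$ and $x_1\ge x_2\ge\cdots\ge x_n\ge 0$, the sequence $i\mapsto(\mA_{\rho,\vx}\vw)_i$ is nondecreasing; this single fact drives both parts.

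For part (i): $k=0$ means $(\mA_{\rho,\vx})_{11}=2-\rho x_1^2\ge 0$, hence $\rho x_1^2\le 2$, and since $0\le x_i\le x_1$ every entry of $\mA_{\rho,\vx}$ is nonnegative. For $\vw\in\mathbb{S}^{n-1}_{+}$ a short computation using $0\le\langle\vx,\vw\rangle\le x_1\|\vw\|_1$ yields $G(\vw)\ge\|\vw\|_1^2\bigl(1-\frac{\rho}{2}x_1^2\bigr)\ge 1-\frac{\rho}{2}x_1^2=G(\ve_1)$, the last inequality using $\|\vw\|_1\ge\|\vw\|_2=1$ and $1-\frac{\rho}{2}x_1^2\ge 0$. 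Hence $\ve_1$ attains the minimum.

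For part (ii), assume $1\le k<n$ (the case $k=n$ is vacuous, since then the reduced problem is the original one). Let $\vw^\star$ solve \eqref{w-star} and let $S=\{i:w^\star_i>0\}$, which is nonempty. Since $k\ge 1$ forces $\rho x_1^2>2$, we have $G(\vw^\star)\le G(\ve_1)=1-\frac{\rho}{2}x_1^2<0$. First-order optimality at $\vw^\star$ yields two facts. (a) On the sub-sphere obtained by freezing $w_i=0$ for $i\notin S$, the point $\vw^\star$ is optimal and lies in the relative interior, so the Lagrange condition gives $(\mA_{\rho,\vx}\vw^\star)_i=\lambda w^\star_i$ for $i\in S$; dotting with $\vw^\star$ identifies $\lambda=(\vw^\star)^\top\mA_{\rho,\vx}\vw^\star=2G(\vw^\star)<0$, so $(\mA_{\rho,\vx}\vw^\star)_i<0$ for every $i\in S$. (b) For $i\notin S$, perturbing $\vw^\star$ to $\vw^\star+\varepsilon\ve_i$ and renormalizing makes the $\varepsilon$-derivative of $G$ at $\varepsilon=0$ equal to $(\mA_{\rho,\vx}\vw^\star)_i$, so optimality forces $(\mA_{\rho,\vx}\vw^\star)_i\ge 0$. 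Combining (a) and (b) with the monotonicity of $i\mapsto(\mA_{\rho,\vx}\vw^\star)_i$ shows $S$ is a prefix $[m]$: if $i\in S$ and $i'<i$, then $(\mA_{\rho,\vx}\vw^\star)_{i'}\le(\mA_{\rho,\vx}\vw^\star)_i<0$, whence $i'\in S$. If $m\ge k+1$, then $k+1\in S$, so $(\mA_{\rho,\vx}\vw^\star)_{k+1}<0$; but, using $\langle\vx,\vw^\star\rangle\le x_1\|\vw^\star\|_1$ and $(\mA_{\rho,\vx})_{1(k+1)}\ge 0$ (the latter by the definition $k=\mu(\rho,\vx)$),
$$
(\mA_{\rho,\vx}\vw^\star)_{k+1}=2\|\vw^\star\|_1-\rho\,x_{k+1}\langle\vx,\vw^\star\rangle\ge\|\vw^\star\|_1\bigl(2-\rho x_1x_{k+1}\bigr)=\|\vw^\star\|_1(\mA_{\rho,\vx})_{1(k+1)}\ge 0,
$$
a contradiction. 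Hence $w^\star_i=0$ for all $i>k$, so $\vw^\star$ has the block form of the statement with $\tilde{\vw}^\star=(w^\star_1,\dots,w^\star_k)$. Finally, $\vw^\top\mA_{\rho,\vx}\vw$ equals $\tilde{\vw}^\top\mA_{\rho,\vx_{[k]}}\tilde{\vw}$ whenever $\vw$ is supported on $[k]$ (as $\mA_{\rho,\vx_{[k]}}$ is the leading $k\times k$ block of $\mA_{\rho,\vx}$), so minimizing $G$ over $\mathbb{S}^{n-1}_{+}$ is the same as minimizing $\frac{1}{2}\vw^\top\mA_{\rho,\vx_{[k]}}\vw$ over $\mathbb{S}^{k-1}_{+}$, and any minimizer of the latter, padded with $n-k$ zeros, is a global minimizer of \eqref{w-star} --- which is the asserted reduction.

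The step I expect to be most delicate is making the first-order conditions (a)--(b) precise --- in particular pinning the active-face multiplier to $2G(\vw^\star)$, so that its sign is governed by $G(\vw^\star)<0$ (Lemmas~\ref{lemma:Tao-An} and \ref{lemma:Tao-An2}, applied to the sub-sphere indexed by $S$, can be invoked here if a softer argument is wanted) --- and disposing of the degenerate case $\langle\vx,\vw^\star\rangle=0$: if it held, then $(\mA_{\rho,\vx}\vw^\star)_i=2\|\vw^\star\|_1>0$ for every $i$, incompatible with $(\mA_{\rho,\vx}\vw^\star)_i<0$ on the nonempty $S$; hence $\langle\vx,\vw^\star\rangle>0$ and the monotonicity above applies. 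Everything else is bookkeeping.
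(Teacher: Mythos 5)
Your proof is correct, but for part (ii) it takes a genuinely different route from the paper. The paper's argument is a direct block decomposition: writing $\mA_{\rho,\vx}$ in $2\times 2$ block form with $\mA_{11}=\mA_{\rho,\vx_{[k]}}$, it observes that all entries of the off-diagonal and lower-right blocks are nonnegative (because any entry with an index $\ge k+1$ is at least $2-\rho x_1x_{k+1}\ge 0$), so $\vw^\top\mA_{\rho,\vx}\vw\ge \vw_1^\top\mA_{11}\vw_1$; it then uses the negativity of the reduced minimum together with the degree-2 homogeneity of the quadratic form to pass from the ball $\|\vw_1\|_2\le 1$ back to the sphere $\mathbb{S}^{k-1}_{+}$, and matches this lower bound by restricting to vectors supported on $[k]$. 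Your argument instead works through first-order optimality on $\mathbb{S}^{n-1}_{+}$: you pin the active-face multiplier to $2G(\vw^\star)<0$ (using that $k\ge 1$ forces $(\mA_{\rho,\vx})_{11}<0$, hence $G(\ve_1)<0$), combine the resulting sign conditions $(\mA_{\rho,\vx}\vw^\star)_i<0$ on the support and $\ge 0$ off it with the monotonicity $(\mA_{\rho,\vx}\vw)_i=2\|\vw\|_1-\rho x_i\langle\vx,\vw\rangle$ nondecreasing in $i$, and conclude the support is a prefix that cannot reach index $k+1$. Both proofs are sound; the paper's is shorter and entirely elementary, while yours costs more machinery (the KKT analysis on the sub-sphere, including the one-sided derivative for inactive coordinates) but buys a sharper structural conclusion — namely that \emph{every} minimizer of \eqref{w-star} has prefix support contained in $[k]$ — which the paper's inequality chain does not directly exhibit. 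Your part (i) is essentially the paper's argument, differing only in that you bound $\langle\vx,\vw\rangle$ by $x_1\|\vw\|_1$ rather than bounding the matrix entries individually. The one step worth making fully explicit is the claim that $\vw^\star$ is a local minimizer of $G$ on the sub-sphere indexed by $S$: this holds because all coordinates of $\vw^\star$ in $S$ are strictly positive, so a neighborhood of $\vw^\star$ in that sub-sphere stays inside $\mathbb{S}^{n-1}_{+}$; as written you assert it without this justification.
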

\begin{proof}\ \
(i) For $\vx \in \mathbb{R}_{\downarrow}^n$, from the fact $(\mA_{\rho,\vx})_{11} \ge 0$, we conclude that $(\mA_{\rho,\vx})_{ij}\ge (\mA_{\rho,\vx})_{11} \ge 0$ for all $i,j \in [n]$.  Therefore, for all $\vw \in \mathbb{S}^{n-1}_{+}$, we have
$$
\frac{1}{2} \vw^\top \mA_{\rho,\vx} \vw
  \ge  \frac{1}{2} (\mA_{\rho,\vx})_{11}\sum_{i,j=1}^n w_iw_j =\frac{1}{2} (\mA_{\rho,\vx})_{11} \|\vw\|_1^2 \ge \frac{1}{2} (\mA_{\rho,\vx})_{11} \|\vw\|_2^2= \frac{1}{2} (\mA_{\rho,\vx})_{11}.
$$
The inequalities in the above can be achieved for $\vw=\ve_1$.


(ii) In this case, we split the matrix $\mA_{\rho,\vx}$ into $2 \times 2$ block matrix as follows
$$
\mA_{\rho,\vx} = \begin{bmatrix}\mA_{11}&\mA_{12} \\ \mA_{21}&\mA_{22} \end{bmatrix},
$$
where $\mA_{11}$, $\mA_{12}$, $\mA_{21}$, and $\mA_{22}$ are size $k \times k$, $k \times (n-k)$, $(n-k) \times k$, and $(n-k)\times (n-k)$, respectively. In fact, $\mA_{11}=\mA_{\rho,\vx_{[k]}}$. We further know that all entries in $\mA_{12}$, $\mA_{21}$, and $\mA_{22}$ are non-negative. For any $\vw \in \mathbb{S}^{n-1}_{+}$, write
$$
\vw=\begin{bmatrix}\vw_1 \\ \vw_2\end{bmatrix}
$$
with $\vw_1 \in \mathbb{R}^k$ and $\vw_2 \in \mathbb{R}^{n-k}$. We have
$$
\vw^\top \mA_{\rho,\vx} \vw=\vw_1^\top \mA_{11}\vw_1+\vw_1^\top \mA_{12}\vw_2+\vw_2^\top \mA_{21}\vw_1+ \vw_2^\top \mA_{22} \vw_2 \ge \vw_1^\top \mA_{11}\vw_1.
$$
The inequality $2-\rho x_1 x_k<0$ implies $\min_{\vw_1} \vw_1^\top \mA_{11}\vw_1 <0$. Thus,
$$
\min_{\vw \in \mathbb{S}^{n-1}_{+}}\frac{1}{2} \vw^\top \mA_{\rho,\vx} \vw \ge \min_{\vw \in \mathbb{S}^{n-1}_{+}}\frac{1}{2} \vw_1^\top \mA_{11} \vw_1 \ge \min_{\tilde{\vw} \in \mathbb{S}^{k-1}_{+}}\frac{1}{2} \tilde{\vw}^\top \mA_{11} \tilde{\vw}.
$$
In particular, for all vectors $\tilde{\vw} \in \mathbb{S}^{n-1}_{+}$ with $\vw_2=\mathbf{0}$, one has
$$
\frac{1}{2} \tilde{\vw_1}^\top \mA_{11} \tilde{\vw_1}=\frac{1}{2} \tilde{\vw}^\top \mA_{\rho,\vx} \tilde{\vw} \ge \min_{\vw \in \mathbb{S}^{n-1}_{+}}\frac{1}{2} \vw^\top \mA_{\rho,\vx} \vw.
$$
We conclude that
$$
\min_{\vw \in \mathbb{S}^{n-1}_{+}}\frac{1}{2} \vw^\top \mA_{\rho,\vx} \vw=\min_{\tilde{\vw} \in \mathbb{S}^{k-1}_{+}}\frac{1}{2} \tilde{\vw}^\top \mA_{11} \tilde{\vw}.
$$
This completes the proof.
\end{proof}

We remark that not all entries of $\tilde{\vw}^\star$ in Theorem~\ref{thm:Main} are necessarily positive, and some entries may be zero, as demonstrated in the following example.

\begin{example}
Let
$$
\vx = \begin{bmatrix}2.5&1.5&1&0.5\end{bmatrix}^\top.
$$
For this vector and two different values of $\rho$, we present the matrix $\mA_{\rho, \vx}$, its eigenvector $\vv$ associated with the negative eigenvalue, and $\vw^\star$ the minimizer of the problem $\min_{\vw \in \mathbb{S}^3_{+}}\frac{1}{2} \vw^\top \mA_{\rho, \vx} \vw$.

For $\rho_1=2.5$, we have $\mA_{\rho_1, \vx}$, $\vv_1$, and $\vw_1^\star$ as follows:
$$\mA_{\rho_1,\vx}=\frac{1}{8}\begin{bmatrix}
-109&   -59&   -34&    -9\\
   -59&   -29&   -14&     1\\
   -34&   -14&    -4&     6\\
    -9&     1&     6&    11
\end{bmatrix},
\vv_1=\begin{bmatrix}0.8598\\ 0.4481 \\ 0.2422 \\ 0.0363\end{bmatrix},
\quad \mbox{and} \quad
\vw_1^\star=\begin{bmatrix}0.8598\\ 0.4481 \\ 0.2422 \\ 0.0363\end{bmatrix}.
$$

For $\rho_2=1.8$, we have $\mA_{\rho_2,\vx}$, $\vv_2$, and $\vw_2^\star$ as follows:
$$
\mA_{\rho_2,\vx}=\begin{bmatrix}
   -9.25&   -4.75&   -2.50&   -0.25\\
   -4.75&   -2.05&   -0.70&    0.65\\
   -2.50&   -0.70&    0.20&    1.10\\
   -0.25&    0.65&    1.10&    1.55
\end{bmatrix},
\vv_2=\begin{bmatrix}0.8795\\    0.4294\\    0.2043\\   -0.0207\end{bmatrix},
\quad \mbox{and} \quad
\vw_2^\star=\begin{bmatrix}0.8804\\    0.4286\\    0.2027\\     0\end{bmatrix}.
$$

Notice that for the values $\rho_1=2.5$ and $\rho_2 = 1.8$, both meet the condition $2-\rho x_1x_4 <0$, that is $\mu(\rho_1,\vx)=\mu(\rho_2,\vx)=4$. However, this does not determine the positivity of all components in $\vw^\star$.
\end{example}

We can establish that $h_2$ acts as a promoter of sparsity from Theorem~\ref{thm:Main} under the situation of $\mu(\rho,\vx)=0$. This assertion is encapsulated in the subsequent result.

\begin{theorem}
For $\rho>0$, the following inclusion holds for all $\vx$ in the set $\{\vx \in \mathbb{R}^n: \|\vx\|_\infty \le \sqrt{{2}/{\rho}}\}$:
$$
\mathbf{0} \in \mathrm{prox}_{\frac{1}{\rho}h_2}(\vx).
$$
\end{theorem}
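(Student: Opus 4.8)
The plan is to reduce to a sorted nonnegative vector and then read the conclusion off Theorem~\ref{thm:Main}(i) via the WRD procedure. Since the set $\{\vx\in\mathbb{R}^n:\|\vx\|_\infty\le\sqrt{2/\rho}\}$ is invariant under signed permutations, and since $\mathrm{prox}_{\frac{1}{\rho}h_2}(\vx)=\mP^{-1}\mathrm{prox}_{\frac{1}{\rho}h_2}(\mP\vx)$ for every $\mP\in\mathcal{P}_n$ by Lemma~\ref{lem:properties}(i), it suffices to prove $\mathbf{0}\in\mathrm{prox}_{\frac{1}{\rho}h_2}(\vx)$ for $\vx\in\mathbb{R}^n_\downarrow$ with $x_1=\|\vx\|_\infty\le\sqrt{2/\rho}$; the general case then follows from $\mathbf{0}=\mP^{-1}\mathbf{0}$. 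For such $\vx$ the $(1,1)$ entry of $\mA_{\rho,\vx}$ equals $2-\rho x_1^2\ge0$, hence $\mu(\rho,\vx)=0$ by the definition~\eqref{eq:mu}.

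With $\mu(\rho,\vx)=0$ in hand, I would invoke Theorem~\ref{thm:Main}(i), which identifies $\vw^\star=\ve_1$ as a global minimizer of the $\vw$-step problem~\eqref{w-star}, with optimal value $G(\ve_1)=-\frac{\rho}{2}x_1^2+h_2(\ve_1)=\frac{1}{2}(2-\rho x_1^2)\ge0$. The $r$-step gives $r^\star=\langle\vx,\ve_1\rangle=x_1$. For the $d$-step I compare $F(\mathbf{0})$ with $F(r^\star\vw^\star)$: by the identity established in the proof of Theorem~\ref{thm:main} (equation~\eqref{tmp:equi} evaluated at $r=\langle\vw^\star,\vx\rangle$) one has $F(r^\star\vw^\star)=\frac{\rho}{2}\|\vx\|_2^2+G(\vw^\star)$, whereas $F(\mathbf{0})=\frac{\rho}{2}\|\vx\|_2^2+h_2(\mathbf{0})=\frac{\rho}{2}\|\vx\|_2^2$ because $h_2(\mathbf{0})=0$. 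Thus $F(r^\star\vw^\star)-F(\mathbf{0})=G(\vw^\star)\ge0$, so we fall into either the first or the second branch of~\eqref{x-star}; in both branches $\mathbf{0}\in\mathrm{prox}_{\frac{1}{\rho}h_2}(\vx)$, which settles the reduced case and hence the theorem.

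I do not anticipate a genuine obstacle: once the signed-permutation reduction is set up, the statement is essentially a corollary of Theorem~\ref{thm:Main}(i). The only points that need care are checking that the hypothesis $\|\vx\|_\infty\le\sqrt{2/\rho}$ is exactly what forces $\mu(\rho,\vx)=0$, and tracking the convention $h_2(\mathbf{0})=0$ so that $F(\mathbf{0})$ is computed correctly. If one prefers a self-contained argument avoiding the WRD machinery, one can instead verify directly that for every $\vu\ne\mathbf{0}$, $F(\vu)-F(\mathbf{0})=\frac{\rho}{2}\|\vu\|_2^2-\rho\langle\vu,\vx\rangle+h_2(\vu)\ge0$, using $\langle\vu,\vx\rangle\le\|\vx\|_\infty\|\vu\|_1$ and the observation that, writing $t=\|\vu\|_1$ and $s=\|\vu\|_2$, one has $t^2-\rho\|\vx\|_\infty s^2 t+\frac{\rho}{2}s^4=(t-\tfrac{\rho\|\vx\|_\infty}{2}s^2)^2+\tfrac{\rho}{4}s^4(2-\rho\|\vx\|_\infty^2)\ge0$ whenever $\rho\|\vx\|_\infty^2\le2$.
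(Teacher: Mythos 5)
Your proposal is correct and follows essentially the same route as the paper: reduce to $\vx\in\mathbb{R}^n_\downarrow$ via Lemma~\ref{lem:properties}, observe that $\|\vx\|_\infty\le\sqrt{2/\rho}$ forces $(\mA_{\rho,\vx})_{11}=2-\rho x_1^2\ge 0$, and conclude from Theorem~\ref{thm:Main}(i) that $G(\ve_1)=\frac{1}{2}(2-\rho x_1^2)\ge 0$, so $\mathbf{0}$ survives the $d$-step. The only (harmless) difference is that the paper treats the case $\vx=\alpha\ve$ separately via Theorem~\ref{thm:positive-multiple-1} while you handle all cases uniformly through $\mu(\rho,\vx)=0$; your closing H\"older-based direct verification is a valid self-contained alternative but is not needed.
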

\begin{proof} By Lemma~\ref{lem:properties},
it suffices to consider all points in  the set $\mathbb{R}^n_\downarrow$ with their $\ell_\infty$ norm smaller than $\sqrt{{2}/{\rho}}$. For $\vx \in \mathbb{R}^n_\downarrow$, we examine two scenarios.
If $\vx=\alpha \ve$ with $\alpha \le \sqrt{{2}/{\rho}}$, the result holds due to  Theorem~\ref{thm:positive-multiple-1}. If $\vx \neq\alpha \ve$ for any $\alpha>0$, by  Theorem~\ref{thm:Main} we have $G(\ve_1)=\frac{1}{2}(2-\rho x_1^2)\ge 0$, hence, the results holds as well.
\end{proof}
This theorem underscores the sparse-promoting nature of $h_2$ within the specified domain.

Given $\rho > 0$ and $\vx \in \Rnd$, Theorem~\ref{thm:Main} provides a clear guideline for algorithm development when computing the optimal solution $\vw$ to problem~\eqref{w-star}, eventually, $\mathrm{prox}_{\frac{1}{\rho}h_2}(\vx)$. If there exists an integer $k \in [1, n-1]$ such that $2 - \rho x_1x_k < 0$ and $2 - \rho x_1x_{k+1} \geq 0$, it follows that $w_{k+1} = \cdots = w_n = 0$. This allows us to safely truncate $\vx$ by removing its last $n-k$ entries. This approach can significantly speed up the computation process by focusing only on the relevant components of $\vx$.

We are ready now to present our algorithm for computing  $\prox_{\frac{1}{\rho} h_2}$ based on our WRD procedure for arbitrary $\vx \in \mathbb{R}^n$.  This algorithm is presented in Algorithm~\ref{alg:prox}.
\begin{algorithm}[h]
\caption{Computing the Proximal Operator of $h_2$} \label{alg:prox}
\begin{algorithmic}[1]
\State \textbf{Input:} Vector $\vx \in \mathbb{R}^n$, parameter $\rho > 0$
\State \textbf{Output:} The proximal operator $\text{prox}_{\frac{1}{\rho} h_2}(\vx)$
\Procedure{}{WRD Procedure}
    \State Sort and convert $\vx$ into $\mathbb{R}^n_{\downarrow}$ via a signed permutation matrix $\mP$.
    \State Compute $k=\mu(\rho, \vx)$ by \eqref{eq:mu}
    \If {$k=0$}
        \State {$\vw=\ve_1$ \hskip 1cm (see item (i) of Theorem~\ref{thm:Main}) }
    \Else {\hfill \texttt{($\vw$-step)}}
        \For{ $k:-1:1$}
            \State{Forming a vector (still denoted by $\vx$) from the first $k$ entries of $\vx$}
            \If {$\vx=\alpha \ve$ for some $\alpha>0$}
                \State $\vu=\text{prox}_{\frac{1}{\rho} h_2}(\vx)$ by Theorem~\ref{thm:positive-multiple-1}
            \ElsIf {$k=2$}
                \State {return $\vw$ by Theorem~\ref{thm:n=2}}
            \Else
            \If{the last entry of $\underline{\vw}$ by \eqref{eq:u-under}, is greater than $0$}
                \State {return $\vw \gets \frac{\underline{\vw}}{\|\underline{\vw}\|_2}$ by Theorem~\ref{thm:positive-comp}}
            \EndIf
            \EndIf
        \EndFor
    \EndIf
    \State {Pad $\vw$ with a zero block such that the resulting vector, still denoted by $\vw$, is in $\mathbb{S}^{n-1}_{+}$.}
    \State {Form $\vu \gets \langle \vx, \vw\rangle \vw$ \hfill  \texttt{($r$-step)}}
    \State {Determine $\vu \gets \left\{
                 \begin{array}{ll}
                   \mathbf{0}, & \hbox{if $F(\mathbf{0})\le F(\vu)$;} \\
                   \vu, & \hbox{otherwise.}
                 \end{array}
               \right.$ \hfill \texttt{($d$-step)}}
    \State{$\vu \gets \mP^{-1}\vu \in \text{prox}_{\frac{1}{\rho} h_2}(\vx)$}
\EndProcedure
\end{algorithmic}
\end{algorithm}

\section{The Proximal Operator of $h_1$} \label{sec:h1}
In this section, we detail the computation of the proximal operator for the function $h_1$ via the WRD procedure.

We begin with showing the optimization problem \eqref{w-star} associated with the $\vw$-step of the WRD procedure. For the given $\rho>0$ and $\vx\in \R_+^n$,
defining
\begin{equation}  \label{def:A1}
    \mA_{\rho, \vx}= -\rho \cdot \vx\vx^{\top}.
\end{equation}
The corresponding function $G$ in \eqref{def:G} for $h_1$ becomes the quadratic form
$$
G(\vw)=\frac{1}{2}\vw^{\top}\mA_{\rho, \vx} \vw+\ve^{\top} \vw.
$$
By Lemma~\ref{lem:properties},  our focus is restricted to discussing the proximity operator of $h_1$ on $\Rnd$. This discussion unfolds in the subsequent three subsections.

In the first subsection, we highlight that the method for $h_2$, as delineated in Section~\ref{sec:h2}, cannot be directly applied to $h_1$, despite the initial feasibility of such a transfer, particularly considering their analogous reformulations. Additionally, we provide the explicit expression of the proximity operator of $h_1$
at specific points, highlighting that $h_1$ serves as a function that promotes sparsity.

The second subsection conducts an in-depth examination of the proximity operator of $h_1$ in $\mathbb{R}^2$. Notably, the method tailored for this task poses challenges in its extension to higher dimensions.

In the third subsection, we introduce a strategy to transform the optimization problem in the $\vw$-step of the WRD procedure. This transformation entails converting a concave objective function constrained on a nonconvex set into one with the same objective function but constrained on a closed and bounded convex set. The latter can be efficiently solved using the nonconvex gradient projection algorithm (see \cite{Attouch-Bolte-Svaiter:MP:13}).

\subsection{The approach for $h_2$ does not work for $h_1$}

Initially, it may seem feasible to directly apply the method for $h_2$ described in Section~\ref{sec:h2} to $h_1$, especially given their similar reformulations. However, we want to point out that this approach is not directly transferable to $h_1$. This becomes evident when considering Lemma~\ref{lemma:Tao-An}, which leads us to the subsequent result.

\begin{proposition}\label{prop:optimal}
For $\vx \in \R_+^n$  and $\rho>0$, we consider a quadratic optimization problem on the unit sphere as follows
\begin{equation} \label{model:sphere}
    \min\left\{\frac{1}{2}\vw^{\top}\mA_{\rho, \vx} \vw+\ve^{\top} \vw: \vw\in \mathbb{S}^{n-1}\right\}.
\end{equation}
A vector $\vw^\star$ is a solution to \eqref{prop:optimal} if and only if there is a unique $\lambda^\star>\rho \|\vx\|_2^2$ such that
$$
(\mA_{\rho,\vx}+\lambda^\star \mI)\vw^\star=-\ve
$$
with $\vw^\star$ being a unit vector.
\end{proposition}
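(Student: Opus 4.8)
The plan is to read Proposition~\ref{prop:optimal} as a direct specialization of Lemma~\ref{lemma:Tao-An} to the data $\mH=\mA_{\rho,\vx}=-\rho\,\vx\vx^\top$, $\vb=\ve$, and $r=1$. That lemma already asserts that $\vw^\star$ solves \eqref{model:sphere} if and only if there is a unique real $\lambda^\star$ satisfying (i) $\mA_{\rho,\vx}+\lambda^\star\mI\succeq 0$, (ii) $(\mA_{\rho,\vx}+\lambda^\star\mI)\vw^\star=-\ve$, and (iii) $\|\vw^\star\|_2=1$. Thus the only substantive task is to show that, under the standing hypothesis $\vx\in\R_+^n$, the multipliers admissible in (i)--(ii) are exactly those with $\lambda^\star>\rho\|\vx\|_2^2$.

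First I would record the spectrum of $\mA_{\rho,\vx}$: being $-\rho$ times a rank-one positive semidefinite matrix, it has the single eigenvalue $-\rho\|\vx\|_2^2$ (with eigenvector $\vx$ when $\vx\neq\mathbf{0}$) and the eigenvalue $0$ on the orthogonal complement of $\vx$. Hence $\mA_{\rho,\vx}+\lambda\mI$ has eigenvalues $\lambda-\rho\|\vx\|_2^2$ and $\lambda$, so condition (i) is equivalent to $\lambda\geq\rho\|\vx\|_2^2$. This already yields the non-strict half of the claim, and the remaining work is to exclude the boundary value $\lambda^\star=\rho\|\vx\|_2^2$.

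To do so I would split into cases. If $\vx=\mathbf{0}$, then $\mA_{\rho,\vx}=\mathbf{0}$ and (ii) reads $\lambda^\star\vw^\star=-\ve$; taking norms and using (i) (so $\lambda^\star\geq 0$) forces $\lambda^\star=\sqrt{n}>0=\rho\|\vx\|_2^2$. If $\vx\neq\mathbf{0}$ and one supposes $\lambda^\star=\rho\|\vx\|_2^2$, then $\vx$ lies in the kernel of the symmetric matrix $\mA_{\rho,\vx}+\lambda^\star\mI$, so the range of that matrix is $\{\vx\}^\perp$; solvability of (ii) then forces $\langle\ve,\vx\rangle=0$, i.e. $\|\vx\|_1=0$ since $\vx\in\R_+^n$, contradicting $\vx\neq\mathbf{0}$. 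Hence in every case $\lambda^\star>\rho\|\vx\|_2^2$; this is precisely the point at which the assumption $\vx\in\R_+^n$ (through $\langle\ve,\vx\rangle=\|\vx\|_1$) is used.

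For the converse, given some $\lambda^\star>\rho\|\vx\|_2^2\geq 0$ with $(\mA_{\rho,\vx}+\lambda^\star\mI)\vw^\star=-\ve$ and $\|\vw^\star\|_2=1$, the eigenvalue computation above shows $\mA_{\rho,\vx}+\lambda^\star\mI$ is positive \emph{definite}, so conditions (i)--(iii) of Lemma~\ref{lemma:Tao-An} hold and $\vw^\star$ is optimal; uniqueness of $\lambda^\star$ is inherited from that lemma (alternatively, on $(\rho\|\vx\|_2^2,\infty)$ the map $\lambda\mapsto\|(\mA_{\rho,\vx}+\lambda\mI)^{-1}\ve\|_2$ is continuous and strictly decreasing from $+\infty$ to $0$, hence equals $1$ exactly once). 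The argument is short overall; the only place demanding care is the exclusion of the boundary multiplier together with the degenerate case $\vx=\mathbf{0}$, and neither presents a real obstacle.
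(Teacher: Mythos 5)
Your argument is correct and rests on the same foundation as the paper's: both proofs specialize Lemma~\ref{lemma:Tao-An} to $\mH=\mA_{\rho,\vx}$, $\vb=\ve$, $r=1$, both read off the spectrum $\{-\rho\|\vx\|_2^2,\,0\}$ of the rank-one matrix to translate positive semidefiniteness into $\lambda\geq\rho\|\vx\|_2^2$, and both exclude the boundary multiplier by observing that $\ve$ cannot be orthogonal to $\vx$ when $\vx\in\R_+^n\setminus\{\mathbf{0}\}$ (you phrase this via the range of $\mA_{\rho,\vx}+\lambda^\star\mI$ being $\{\vx\}^\perp$; the paper writes out the same inconsistency with an explicit orthogonal diagonalization). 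Where you genuinely diverge is the converse direction: you simply verify that any $\lambda^\star>\rho\|\vx\|_2^2$ satisfying the displayed equation fulfills conditions (i)--(iii) of the lemma, which is all the stated equivalence requires and is the leaner route. The paper instead proves \emph{existence} of such a multiplier by solving the secular equation $\|(\mA_{\rho,\vx}+\lambda\mI)^{-1}\ve\|_2=1$ via the Sherman--Morrison formula, a quartic, and Descartes' rule of signs; that extra work is not needed for the ``if and only if'' itself, but it produces the closed-form expressions \eqref{eq:Case1} and \eqref{eq:Case2} for $\vw^\star$, which the paper uses immediately afterwards to conclude that every entry of $\vw^\star$ is negative and hence that the $h_2$ methodology fails for $h_1$. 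So your proof establishes the proposition as stated, while the paper's longer converse is doing double duty; you also cover the degenerate case $\vx=\mathbf{0}$, which the paper's proof implicitly sets aside by dividing by $\|\vx\|_2$.
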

\begin{proof} Problem~\eqref{model:sphere} is a special case of problem~\eqref{eq:tao} by identifying $\mA_{\rho, \vx}$, $\ve$, and $1$ as $\mH$, $\vb$, and $r$, respectively.

The matrix $\mA_{\rho, \vx}=-\rho \vx \vx^\top$ is a rank-1 matrix and has $-\rho \|\vx\|_2^2$ as its only one non-zero eigenvalue with the associated unit eigenvector $\frac{\vx}{\|\vx\|_2}$. Hence, for any $\lambda \ge \rho \|\vx\|_2^2$, the matrix $\mA_{\rho,\vx}+\lambda \mI$ is positive semidefinite.

``$\Rightarrow$'' If $\vw^*$ is the optimal solution to problem~\eqref{model:sphere}, by Lemma~\ref{lemma:Tao-An}, there exists a unique $\lambda^\star \ge \rho \|\vx\|_2^2$ such that $(\mA_{\rho,\vx}+\lambda^\star \mI)\vw^\star=-\ve$ with $\vw^\star$ being a unit vector. We claim that $\lambda^\star > \rho \|\vx\|_2^2$. If not, assume that $\lambda^\star = \rho \|\vx\|_2^2$, and let $\mU$ be an orthogonal matrix whose the first column is $\frac{\vx}{\|\vx\|_2}$. Then, the equality $(\mA_{\rho,\vx}+\lambda^\star \mI)\vw^\star=-\ve$ leads to
$$
\mU \begin{bmatrix}
    0&&&\\
    &\rho \|\vx\|_2^2&&\\
    &&\ddots&\\
    &&&\rho \|\vx\|_2^2
\end{bmatrix} \mU^\top \vw^\star=-\ve
\quad \mbox{or} \quad
\begin{bmatrix}
    0&&&\\
    &\rho \|\vx\|_2^2&&\\
    &&\ddots&\\
    &&&\rho \|\vx\|_2^2
\end{bmatrix} \mU^\top \vw^\star=-\begin{bmatrix}
    \frac{\|\vx\|_1}{\|\vx\|_2} \\ \star \\ \vdots \\ \star
\end{bmatrix},
$$
which is inconsistent. Hence, $\lambda^\star$ is strictly greater than $\rho \|\vx\|_2^2$.

``$\Leftarrow$''  We show that there exists an $\lambda>\rho \|\vx\|_2^2$ such that $\|(\mA_{\rho,\vx}+\lambda \mI)^{-1}\ve\|_2=1$. For $\lambda \neq \rho \|\vx\|_2^2$, the matrix $\mA_{\rho,\vx}+\lambda \mI$ is invertible and its inverse is
$$
(\mA_{\rho,\vx}+\lambda \mI)^{-1}=\frac{1}{\lambda}\left(\mI+\frac{\rho}{\lambda-\rho \|\vx\|_2^2}\vx \vx^\top\right).
$$
For $\lambda>\rho \|\vx\|_2^2$, from $\|(\mA_{\rho,\vx}+\lambda \mI)^{-1}\ve\|_2=1$ together with the above equation, we obtain
\begin{equation}\label{eq:Opt-General}
\left\|(\lambda-\rho \|\vx\|_2^2)\ve+\rho \|\vx\|_1\vx\right\|_2=\lambda (\lambda-\rho \|\vx\|_2^2).
\end{equation}
To study the root the above equation, we consider two different cases: (i) $\vx=\alpha \ve$ for some $\alpha>0$ and (ii) $\vx \neq \alpha \ve$ for any $\alpha>0$.

For the case of $\vx=\alpha \ve$ for some $\alpha>0$, one has $\|\vx\|_1=\alpha n$ and $\|\vx\|_2=\alpha \sqrt{n}$. It leads from \eqref{eq:Opt-General} that
$\lambda\sqrt{n}=\lambda(\lambda-\rho\alpha^2 n)$.  This equation has two real roots and the only root, that is larger than $\rho \|\vx\|_2^2$, is
$\lambda^\star = \sqrt{n}+\rho \alpha^2 n > \rho \alpha^2 n=\rho \|\vx\|_2^2$.
By Lemma~\ref{lemma:Tao-An},
\begin{equation}\label{eq:Case1}
\vw^\star = -\frac{1}{\sqrt{n}} \ve
\end{equation}
is the optimal solution to problem~\eqref{model:sphere}.

The rest of the proof considers the case of $\vx \neq \alpha \ve$ for any $\alpha>0$. Squaring the identity \eqref{eq:Opt-General} from its both sides and simplifying the resulting equation lead to the following quartic equation
$$
Q(q)=0,
$$
where $q=\lambda-\rho \|\vx\|_2^2$ and
$$
Q(q)=q^4+2\rho\|\vx\|_2^2 q^3+(\rho^2\|\vx\|_2^4-n)q^2-2\rho\|\vx\|_1^2q-\rho^2\|\vx\|_1^2\|\vx\|_2^2.
$$
Since $Q(0)=-\rho^2\|\vx\|_1^2\|\vx\|_2^2<0$ and $Q(q)$ is positive for a sufficient large $q$, there exists at least one root of $Q$ on the interval $[0,\infty)$. No matter what value of $(\rho^2\|\vx\|_2^4-n)$ will be, the number of sign changes of the polynomial $Q$ is $1$. Therefore, by Descartes' Rule of Signs \cite{Wang:AMM:2004}, we conclude that $Q$ has exactly one positive root, say $q^\star$.  Hence, with $\lambda^\star = q^\star+\rho \|\vx\|_2^2$,
\begin{equation}\label{eq:Case2}
\vw^\star= (\mA_{\rho, \vx}+\lambda^* \mI)^{-1} (-\ve) =-\frac{1}{\lambda^*}\left(\ve+\frac{\rho\|\vx\|_1}{\lambda^*-\rho\|\vx\|_2^2}\vx\right)
\end{equation}
is the optimal solution to problem~\eqref{model:sphere} by Lemma~\ref{lemma:Tao-An} again.
\end{proof}

It is evident from the preceding proof that all entries of the optimal solution $\vw^\star$, as indicated in \eqref{eq:Case1} and \eqref{eq:Case2}, are negative. Consequently, this vector $\vw^\star$ cannot serve as the solution to problem ~\eqref{w-star}. Therefore, the methodology employed for $h_2$ is not applicable to $h_1$, necessitating a distinct approach.

Next, we provide the proximity operator of $h_1$ for vectors $\vx$ with uniform entries.

\begin{theorem}\label{thm:h1-R2-ae}
For $\rho > 0$ and $\vx = \alpha \ve \in \mathbb{R}^n$ for some $\alpha > 0$, then
\[
\mathrm{prox}_{\frac{1}{\rho} h_1}(\vx) =
\begin{cases}
 \{\vzero\},&\text{if } \alpha<\sqrt{\frac{2}{\rho\sqrt{n}}} \\
\{\vzero, \vx\}, & \text{if } \alpha=\sqrt{\frac{2}{\rho\sqrt{n}}}; \\
   \{\vx\}, & \text{if } \alpha>\sqrt{\frac{2}{\rho\sqrt{n}}}.
\end{cases}
\]
\end{theorem}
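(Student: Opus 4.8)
The plan is to run the WRD procedure of Theorem~\ref{thm:main} with $f=h_1$ and $\vx=\alpha\ve$ (note $\alpha\ve\in\Rnd$, so the theorem applies directly). First I would write down the $\vw$-step objective $G$ from \eqref{def:G}. For $\vw\in\mathbb{S}^{n-1}_{+}$ we have $\|\vw\|_2=1$ and $\vw\ge\vzero$, hence $h_1(\vw)=\|\vw\|_1$ and $\langle\vx,\vw\rangle=\alpha\langle\ve,\vw\rangle=\alpha\|\vw\|_1$, so that $G(\vw)=-\tfrac{\rho\alpha^2}{2}\|\vw\|_1^2+\|\vw\|_1$ depends on $\vw$ only through the scalar $t:=\|\vw\|_1$. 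As $\vw$ ranges over $\mathbb{S}^{n-1}_{+}$, the value $t$ sweeps out the entire interval $[1,\sqrt n]$ (the endpoints being attained at $\ve_i$ and at $\tfrac1{\sqrt n}\ve$, and all intermediate values by continuity), so the $\vw$-step reduces to minimizing the single-variable strictly concave function $g(t)=-\tfrac{\rho\alpha^2}{2}t^2+t=t\bigl(1-\tfrac{\rho\alpha^2}{2}t\bigr)$ over $[1,\sqrt n]$.

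Next, since $g$ is strictly concave its minimum over $[1,\sqrt n]$ is attained only at an endpoint, and a short computation gives $g(1)-g(\sqrt n)=(\sqrt n-1)\bigl(\tfrac{\rho\alpha^2}{2}(\sqrt n+1)-1\bigr)$. If $\rho\alpha^2<2/\sqrt n$, then $1-\tfrac{\rho\alpha^2}{2}t>0$ for every $t\le\sqrt n$, so $g>0$ on all of $[1,\sqrt n]$. If $\rho\alpha^2\ge 2/\sqrt n$ and $n\ge 2$, then $\tfrac{\rho\alpha^2}{2}(\sqrt n+1)>1$, hence $g(1)>g(\sqrt n)$, and since $g(\sqrt n)=\sqrt n\bigl(1-\tfrac{\rho\alpha^2}{2}\sqrt n\bigr)\le 0$, the minimum of $g$ equals $g(\sqrt n)\le 0$ and is attained only at $t=\sqrt n$; the unique $\vw\in\mathbb{S}^{n-1}_{+}$ with $\|\vw\|_1=\sqrt n$ is $\tfrac1{\sqrt n}\ve$, so $\vw^\star=\tfrac1{\sqrt n}\ve$. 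The threshold $\rho\alpha^2=2/\sqrt n$ is precisely $\alpha=\sqrt{2/(\rho\sqrt n)}$.

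Then I would apply the $r$-step and $d$-step. Whenever $\vw^\star=\tfrac1{\sqrt n}\ve$, the $r$-step gives $r^\star=\langle\vx,\vw^\star\rangle=\alpha\sqrt n$, so $r^\star\vw^\star=\alpha\ve=\vx$. For the $d$-step, exactly as in the proof of Theorem~\ref{thm:main}, $F(\vzero)-F(r^\star\vw^\star)=h_1(\vzero)-G(\vw^\star)=-\min_{[1,\sqrt n]}g$, using $h_1(\vzero)=0$ from \eqref{def:hp}. Consequently: if $\alpha<\sqrt{2/(\rho\sqrt n)}$ then $\min g>0$, so $F(\vzero)<F(r^\star\vw^\star)$ and $\mathrm{prox}_{\frac1\rho h_1}(\vx)=\{\vzero\}$; if $\alpha=\sqrt{2/(\rho\sqrt n)}$ then $\min g=g(\sqrt n)=0$ attained at $\vw^\star=\tfrac1{\sqrt n}\ve$, so $F(\vzero)=F(\vx)$ and the set is $\{\vzero,\vx\}$; if $\alpha>\sqrt{2/(\rho\sqrt n)}$ then $\min g=g(\sqrt n)<0$ at the same $\vw^\star$, so $F(\vx)<F(\vzero)$ and the set is $\{\vx\}$. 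The degenerate case $n=1$, where $\mathbb{S}^0_{+}=\{1\}$ forces $t\equiv 1$ and $h_1=\|\cdot\|_0$ by the convention noted after \eqref{def:hp}, obeys the same dichotomy with threshold $\rho\alpha^2=2$, which is consistent with the stated formula.

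The step I expect to require the most care is the second one: checking that $G$ restricted to $\mathbb{S}^{n-1}_{+}$ really depends on $\vw$ only through $\|\vw\|_1$, that $\|\vw\|_1$ attains every value in $[1,\sqrt n]$, and — what is crucial for obtaining exactly $\{\vx\}$ rather than some other candidate — that above the threshold the concave $g$ is minimized uniquely at $t=\sqrt n$, whose sole preimage in $\mathbb{S}^{n-1}_{+}$ is the uniform direction $\tfrac1{\sqrt n}\ve$. The remaining work is routine endpoint bookkeeping and a sign check in the $d$-step.
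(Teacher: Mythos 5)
Your proof is correct and follows essentially the same route as the paper's: both reduce the $\vw$-step to minimizing the concave quadratic $t\mapsto -\tfrac{\rho\alpha^2}{2}t^2+t$ in $t=\|\vw\|_1$ over $[1,\sqrt n]$, identify the minimizing endpoint, and finish with the $r$- and $d$-steps. Your organization of the cases directly around the threshold $\rho\alpha^2=2/\sqrt n$ (rather than the paper's intermediate comparison of $\tfrac{1}{\rho\alpha^2}$ with $\tfrac{1+\sqrt n}{2}$ followed by a sign check) is only a cosmetic difference, and your explicit uniqueness argument for $\vw^\star=\tfrac{1}{\sqrt n}\ve$ above the threshold is a welcome bit of extra care.
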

\begin{proof}
In this situation, we have $\mA_{\rho, x} = - \rho\alpha^2 \ve\ve^{\top}$ from \eqref{def:A1}. The objective function of problem \eqref{w-star} is
\[
G(\vw)=\frac{1}{2} \vw^\top \mA_{\rho, x} \vw +\ve^{\top} \vw = -\frac{1}{2} \rho\alpha^2 \| \vw \|_1^2+ \|\vw\|_1=-\frac{1}{2} \rho\alpha^2\left(\|\vw\|_1-\frac{1}{\rho \alpha^2}\right)^2+\frac{1}{2\rho \alpha^2},
\]
where $\vw \in \mathbb{S}^{n-1}_{+}$.
Note that $\|\vw\|_1 \in [1, \sqrt{n}]$ for all $\vw \in \mathbb{S}^{n-1}_{+}$, the above quantity achieves its global minimum at $\|\vw\|_1$ being $1$ or $\sqrt{n}$, depending on which one is further away to $\frac{1}{\rho \alpha^2}$. Hence, $\|\vw^\star\|_1$ the $\ell_1$ norm of the optimal solution $\vw^\star$ to  problem \eqref{w-star} is $\sqrt{n}$ if $\frac{1}{\rho \alpha^2}<\frac{1}{2}(1+\sqrt{n})$; $1$ or $\sqrt{n}$ if $\frac{1}{\rho \alpha^2}=\frac{1}{2}(1+\sqrt{n})$; or $1$ if $\frac{1}{\rho \alpha^2}>\frac{1}{2}(1+\sqrt{n})$.
As a result, the $\vw$-step of the WRD procedure provides the optimal solution $\vw^\star$ to  problem \eqref{w-star} as follows:
$$
\vw^\star\in\begin{cases}
    \{\frac{1}{\sqrt{n}}\ve\}, & \text{if } \frac{1}{\rho \alpha^2}<\frac{1}{2}(1+\sqrt{n}); \\
\{\frac{1}{\sqrt{n}}\ve\}\cup \{\vect{e_i}: i=1,\ldots, n\}, & \text{if } \frac{1}{\rho \alpha^2}=\frac{1}{2}(1+\sqrt{n}); \\
\{\vect{e_i}: i=1,\ldots, n\},&\text{if } \frac{1}{\rho \alpha^2}>\frac{1}{2}(1+\sqrt{n}).
\end{cases}
$$

The $r$-step of the WRD procedure simply follows with $r^\star = \la \vx, \vw^\star \ra$. At the $d$-step of the WRD procedure, we compare $F(r^\star \vw^\star)$ and $F(\vzero)$ with $F$ defined in \eqref{def:F}. Note that
$$
F(r^\star \vw^\star)-F(\vzero)=G(\vw^\star)=
\begin{cases}
    -\frac{1}{2}\rho \alpha^2 n+\sqrt{n}, & \text{if } \frac{1}{\rho \alpha^2}<\frac{1}{2}(1+\sqrt{n}); \\
\frac{\sqrt{n}}{1+\sqrt{n}}, & \text{if } \frac{1}{\rho \alpha^2}=\frac{1}{2}(1+\sqrt{n}); \\
-\frac{1}{2}\rho \alpha^2 +1,&\text{if } \frac{1}{\rho \alpha^2}>\frac{1}{2}(1+\sqrt{n}).
\end{cases}
$$
We see that under the condition $\frac{1}{\rho \alpha^2}<\frac{1}{2}(1+\sqrt{n})$, the quality $F(r^\star \vw^\star)-F(\vzero)=-\frac{1}{2}\rho \alpha^2 n+\sqrt{n}$ is positive if $\frac{1}{\rho \alpha^2}>\frac{\sqrt{n}}{2}$, zero if $\frac{1}{\rho \alpha^2}=\frac{\sqrt{n}}{2}$, or negative if $\frac{1}{\rho \alpha^2}<\frac{\sqrt{n}}{2}$;  Under the condition $\frac{1}{\rho \alpha^2}=\frac{1}{2}(1+\sqrt{n})$, $F(r^\star \vw^\star)-F(\vzero)=\frac{\sqrt{n}}{1+\sqrt{n}}>0$; Under the condition $\frac{1}{\rho \alpha^2}>\frac{1}{2}(1+\sqrt{n})$, i.e., $-\frac{1}{2}\rho \alpha^2>\frac{-1}{1+\sqrt{n}}$, we have $F(r^\star \vw^\star)-F(\vzero)=-\frac{1}{2}\rho \alpha^2 +1>\frac{\sqrt{n}}{1+\sqrt{n}}$ always positive. The result of this theorem follows from \eqref{x-star}.
\end{proof}

The next result shows that the function $h_1$ is indeed a sparse promoting function whose proximity operator will send the points in a neighborhood of the origin to the origin (see \cite{Shen-Suter-Tripp:JOTA:2019}).
\begin{theorem}\label{thm:h1-sparse}
For $\rho > 0$, the following inclusion
$$
\mathbf{0} \in \mathrm{prox}_{\frac{1}{\rho} h_1}(\vx)
$$
holds for $\vx \in \mathbb{R}^n_{+}$ with $\|\vx\|_2 \le \sqrt{\frac{2}{\rho}}$.
\end{theorem}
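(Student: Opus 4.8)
The plan is to combine Theorem~\ref{thm:main} with a one-line Cauchy--Schwarz estimate on the $\vw$-step objective. First I would reduce to the case $\vx \in \mathbb{R}^n_{\downarrow}$: choose $\mP \in \mathcal{P}_n$ with $\mP\vx \in \mathbb{R}^n_{\downarrow}$ (since $\vx \in \mathbb{R}^n_{+}$ this $\mP$ may be taken to be an ordinary permutation, so $\mP\vx \in \mathbb{R}^n_{\downarrow}$ and $\|\mP\vx\|_2 = \|\vx\|_2 \le \sqrt{2/\rho}$). By item~(i) of Lemma~\ref{lem:properties}, $\mathrm{prox}_{\frac{1}{\rho}h_1}(\vx) = \mP^{-1}\mathrm{prox}_{\frac{1}{\rho}h_1}(\mP\vx)$, and $\mP^{-1}\mathbf{0} = \mathbf{0}$, so it suffices to prove $\mathbf{0} \in \mathrm{prox}_{\frac{1}{\rho}h_1}(\mP\vx)$. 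Hence we may assume $\vx \in \mathbb{R}^n_{\downarrow}$ with $\|\vx\|_2 \le \sqrt{2/\rho}$.

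Next I would translate the conclusion into a statement about $G$ from \eqref{def:G}. Let $\vw^\star$ be a minimizer of \eqref{w-star}; it exists because $G$ is continuous on the compact set $\mathbb{S}^{n-1}_{+}$. Identity \eqref{tmp:equi}, evaluated at $r = \langle\vx,\vw^\star\rangle$, gives $F(\langle\vx,\vw^\star\rangle\vw^\star) = \tfrac{\rho}{2}\|\vx\|_2^2 + G(\vw^\star)$, while $F(\mathbf{0}) = \tfrac{\rho}{2}\|\vx\|_2^2 + h_1(\mathbf{0}) = \tfrac{\rho}{2}\|\vx\|_2^2$ by the convention $h_1(\mathbf{0}) = 0$ from \eqref{def:hp}. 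In view of the description \eqref{x-star} of $\mathrm{prox}_{\frac{1}{\rho}h_1}(\vx)$, the inclusion $\mathbf{0} \in \mathrm{prox}_{\frac{1}{\rho}h_1}(\vx)$ holds precisely when $F(\mathbf{0}) \le F(\langle\vx,\vw^\star\rangle\vw^\star)$, that is, when $G(\vw^\star) \ge 0$; so it suffices to show $\min_{\vw\in\mathbb{S}^{n-1}_{+}} G(\vw) \ge 0$.

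The heart of the argument is then an elementary bound valid for every $\vw \in \mathbb{S}^{n-1}_{+}$. On one hand, $h_1(\vw) = \|\vw\|_1/\|\vw\|_2 = \|\vw\|_1 \ge \|\vw\|_2 = 1$; on the other hand, by Cauchy--Schwarz and the hypothesis, $\langle\vx,\vw\rangle^2 \le \|\vx\|_2^2\|\vw\|_2^2 = \|\vx\|_2^2 \le 2/\rho$. Substituting these into $G(\vw) = -\tfrac{\rho}{2}\langle\vx,\vw\rangle^2 + h_1(\vw)$ yields
$$
G(\vw) \ge -\frac{\rho}{2}\cdot\frac{2}{\rho} + 1 = 0 ,
$$
so in particular $G(\vw^\star) \ge 0$, and the claim follows.

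I do not expect any substantial obstacle: once the reduction via Theorem~\ref{thm:main} is in place, the proof is essentially a two-line estimate. The only points needing a little care are bookkeeping ones: making the reduction to $\mathbb{R}^n_{\downarrow}$ precise, observing that in the degenerate case $\langle\vx,\vw^\star\rangle = 0$ the candidate $\langle\vx,\vw^\star\rangle\vw^\star$ already equals $\mathbf{0}$ (so the conclusion is immediate), and being explicit that under the convention $h_1(\mathbf{0}) = 0$ the comparison $F(\mathbf{0}) \le F(\langle\vx,\vw^\star\rangle\vw^\star)$ is exactly the inequality $G(\vw^\star) \ge 0$.
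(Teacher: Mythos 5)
Your proposal is correct and follows essentially the same route as the paper: the paper's proof is exactly the estimate $G(\vw) = -\tfrac{\rho}{2}\langle\vx,\vw\rangle^2 + \ve^\top\vw \ge -\tfrac{\rho}{2}\|\vx\|_2^2 + 1 \ge 0$ on $\mathbb{S}^{n-1}_{+}$ (Cauchy--Schwarz plus $\|\vw\|_1 \ge \|\vw\|_2 = 1$), followed by $F(\langle\vx,\vw\rangle\vw) - F(\mathbf{0}) = G(\vw) \ge 0$. Your version merely makes the reduction to $\mathbb{R}^n_{\downarrow}$ and the $h_1(\mathbf{0})=0$ bookkeeping explicit, which the paper leaves implicit.
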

\begin{proof}\ \ Let $G$ be the objective function of problem \eqref{w-star} associated with $h_1$. For $\vw \in \mathbb{S}^{n-1}_{+}$, we have
\begin{equation*}
G(\vw)=-\frac{\rho}{2} \langle \vx, \vw\rangle^2 +\ve^\top \vw
\ge -\frac{\rho}{2} \|\vx\|_2^2 + 1 \ge 0
\end{equation*}
for $\vx \in \mathbb{R}^n_{+}$ with $\|\vx\|_2 \le \sqrt{\frac{2}{\rho}}$.   We further have
$F(\langle \vx, \vw\rangle \vw) -F(\mathbf{0})=G(\vw)\ge 0$,
where $F$ is defined in \eqref{def:F}. Hence, $\mathbf{0} \in \mathrm{prox}_{\frac{1}{\rho} h_1}(\vx)$.
\end{proof}

\subsection{Special case: the proximity operator of $h_1$ on $\R^2$}

The following result establishes a region in which the proximity operator of $h_1$ does not vanish on $\R_{\downarrow}^2$.
\begin{proposition}\label{prop:region-crude-h1}
For $\rho>0$, define two sets in $\R_{\downarrow}^2$ as follows:
\begin{eqnarray*}
    S_1&=&\left\{\vx \in \R_{\downarrow}^2: x_1 > \sqrt{\frac{2}{\rho}}\right\},\\
    S_2&=&\left\{\vx \in \R_{\downarrow}^2: x_2=\kappa x_1,  x_1>\sqrt{\frac{2(1+\kappa)}{\rho(1+\kappa^2)^{3/2}}}, \kappa \in [0,1]\right\}.
\end{eqnarray*}
Then, the origin is not in $\mathrm{prox}_{\frac{1}{\rho}h_1}(\vx)$ for every point $\vx \in S_1 \cup S_2$.
\end{proposition}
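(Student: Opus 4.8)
The plan is to read off from Theorem~\ref{thm:main} exactly when the origin fails to lie in $\mathrm{prox}_{\frac{1}{\rho}h_1}(\vx)$, and then to certify that failure on $S_1$ and on $S_2$ by evaluating the $\vw$-objective $G$ at a single, well-chosen feasible point in each case. For $h_1$ the function in \eqref{def:G} is $G(\vw)=-\frac{\rho}{2}\langle\vx,\vw\rangle^2+\|\vw\|_1$ on $\mathbb{S}^{n-1}_{+}$ (using $\|\vw\|_2=1$), and since $h_1(\mathbf{0})=0$ by \eqref{def:hp} we have $F(\mathbf{0})=\frac{\rho}{2}\|\vx\|_2^2$ while $F(\langle\vx,\vw^\star\rangle\vw^\star)=\frac{\rho}{2}\|\vx\|_2^2+G(\vw^\star)$ for any solution $\vw^\star$ of \eqref{w-star}. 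Hence, by \eqref{x-star}, $\mathbf{0}\notin\mathrm{prox}_{\frac{1}{\rho}h_1}(\vx)$ precisely when $\min_{\vw\in\mathbb{S}^{n-1}_{+}}G(\vw)<0$; in particular it suffices to exhibit one $\vw\in\mathbb{S}^{n-1}_{+}$ with $G(\vw)<0$, and in that event the unique prox point $\langle\vx,\vw^\star\rangle\vw^\star$ is genuinely nonzero, because $G(\vw^\star)<0$ forces $\langle\vx,\vw^\star\rangle^2>\frac{2}{\rho}\|\vw^\star\|_1\ge\frac{2}{\rho}>0$.

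\textbf{The set $S_1$.} For $\vx\in S_1$ I would take the test vector $\ve_1$. Then $G(\ve_1)=1-\frac{\rho}{2}x_1^2$, which is negative exactly because $x_1>\sqrt{2/\rho}$; this settles $S_1$.

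\textbf{The set $S_2$.} For $\vx\in S_2$, writing $\vx=x_1(1,\kappa)^{\top}$ with $\kappa\in[0,1]$, I would use the radial direction $\vw=\vx/\|\vx\|_2\in\mathbb{S}^{n-1}_{+}$. A short computation gives
$$
G\!\left(\frac{\vx}{\|\vx\|_2}\right)=-\frac{\rho}{2}\|\vx\|_2^2+\frac{\|\vx\|_1}{\|\vx\|_2}=-\frac{\rho}{2}(1+\kappa^2)x_1^2+\frac{1+\kappa}{\sqrt{1+\kappa^2}},
$$
which is negative precisely when $x_1^2>\frac{2(1+\kappa)}{\rho(1+\kappa^2)^{3/2}}$ --- exactly the inequality defining $S_2$. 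Combining the two cases proves the proposition.

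\textbf{Main obstacle.} There is no substantial obstacle here; the computation is routine once the reduction is in place. The two points that need care are: invoking \eqref{x-star} in the correct direction, so that a single negative value of $G$ already \emph{excludes} the origin rather than merely making it non-unique; and checking that the surviving prox point is nonzero, which is the displayed inequality $\langle\vx,\vw^\star\rangle^2>\frac{2}{\rho}\|\vw^\star\|_1$. The only genuinely creative ingredient is the choice of test vectors --- the coordinate direction $\ve_1$ for $S_1$ and the radial direction $\vx/\|\vx\|_2$ for $S_2$, the latter chosen so that $G$ collapses to $h_1(\vx)-\frac{\rho}{2}\|\vx\|_2^2$, which is what pins down the threshold appearing in the definition of $S_2$; as a consistency check, the two regions and the two test vectors coincide on the common boundary $\kappa=0$.
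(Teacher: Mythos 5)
Your proof is correct and follows essentially the same route as the paper's: the paper exhibits the test points $\vz=x_1\ve_1$ for $S_1$ and $\vz=\vx$ for $S_2$ and checks $F(\vz)<F(\mathbf{0})$ directly, which is exactly what your test directions $\ve_1$ and $\vx/\|\vx\|_2$ produce after the $r$-step, since $F(\langle\vx,\vw\rangle\vw)-F(\mathbf{0})=G(\vw)$. The only difference is bookkeeping through $G$ rather than $F$, and your extra remark that the surviving prox point is nonzero is not needed for the statement.
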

\begin{proof} For each point $\vx \in S_1 \cup S_2$, to prove the origin is not in $\mathrm{prox}_{\frac{1}{\rho}h_1}(\vx)$  it is sufficient to show that there exists a point, say $\vz$, in $\R_{\downarrow}^2$ such that $F(\vz)-F(\mathbf{0})<0$, where $F$ is defined in \eqref{def:F}.

First, we choose $\vz=x_1\ve_1\in \R_{\downarrow}^2$. Then, $F(\vz)-F(\mathbf{0})=-\frac{\rho}{2}x_1^2+1<0$ which holds for $\vx \in S_1$.

Next, we choose $\vz=\vx$. Then, with $\kappa=\frac{x_2}{x_1}$,
$$
F(\vz)-F(\mathbf{0})=(1+\kappa^2)\left(-\frac{1}{2}\rho x_1^2 + \frac{1+\kappa}{(1+\kappa^2)^{3/2}}\right)<0,
$$
for all points $\vx \in S_2$. This completes the proof of this proposition.
\end{proof}

We comment on this proposition. Consider two curves parameterized by the parameter $\kappa \in [0,1]$ as follows:
$$
\mathcal{C}_1: [0,1] \ni \kappa \mapsto \sqrt{\frac{2}{\rho}}(1,\kappa) \quad \mbox{and} \quad
\mathcal{C}_2: [0,1]\ni \kappa \mapsto \sqrt{\frac{2(1+\kappa)}{\rho(1+\kappa^2)^{3/2}}}(1,\kappa).
$$
We have $\mathcal{C}_1(0)=\mathcal{C}_2(0)=\sqrt{\frac{2}{\rho}}(1,0)$, $\mathcal{C}_1(1)=\sqrt{\frac{2}{\rho}}(1,1)$, and $\mathcal{C}_2(1)=\sqrt{\frac{\sqrt{2}}{\rho}}(1,1)$. Two curves intersect at the point with $\kappa$ to be the root of the polynomial of $\kappa^5+3\kappa^2+2\kappa-2=0$. This root is $\kappa\approx 0.6124$. The red shaded region in Figure~\ref{fig:Prox_h1_Sparse_Region} is the set $S_1\cup S_2$.  The blue shaded region Figure~\ref{fig:Prox_h1_Sparse_Region} represents the set where every point is mapped to the origin by $\mathrm{prox}_{\frac{1}{\rho}h_1}(\vx)$, as stipulated by  Theorem~\ref{thm:h1-sparse}. We will explore the blank region situated between the blue and red shaded areas in the subsequent analysis.

\begin{figure}[ht]
    \centering
{
    \begin{tabular}{c}
    \includegraphics[width=4cm]{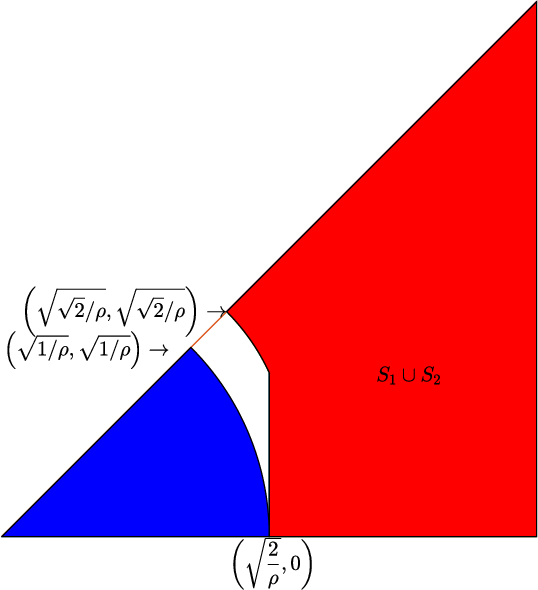}
    \end{tabular}
}
    \caption{The proximity operator $\mathrm{prox}_{\frac{1}{\rho} h_1}$ will map all points in the blue shaded region to the origin and all points in the red region to a nonzero point.}
    \label{fig:Prox_h1_Sparse_Region}
\end{figure}

In the following analysis, our discussion distinctly excludes the instances of uniform entries in $\vx$, which have been previously addressed in Theorem~\ref{thm:h1-R2-ae}. We now focus on the case where $\vx \in \R_{\downarrow}^2$. This scenario can be further divided into two distinct cases: one where $\vx$ contains one zero entry, and another where it does not. We begin by examining the situation where $\vx$ includes one zero entry, as detailed in the following proposition.

\begin{proposition}\label{prop:h1:n=2-x2=0}
For $\rho>0$ and $\vx=\alpha \ve_1$ with $\alpha>0$, then
$$
\mathrm{prox}_{\frac{1}{\rho}h_1}(\vx) = \begin{cases}
    \{\mathbf{0}\}, & \text{if } \alpha < \sqrt{\frac{2}{\rho}}; \\
    \{\mathbf{0}, \vx\}, & \text{if } \alpha = \sqrt{\frac{2}{\rho}}; \\
    \{\vx\}, & \text{if } \alpha > \sqrt{\frac{2}{\rho}}.
\end{cases}
$$
\end{proposition}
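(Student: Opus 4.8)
The plan is to run the WRD procedure of Theorem~\ref{thm:main} for $f=h_1$ on $\R_{\downarrow}^2$ at the point $\vx=\alpha\ve_1$. By \eqref{def:A1} we have $\mA_{\rho,\vx}=-\rho\alpha^2\,\ve_1\ve_1^{\top}$, so the $\vw$-step objective \eqref{w-star} reads
$$
G(\vw)=\tfrac12\vw^{\top}\mA_{\rho,\vx}\vw+\ve^{\top}\vw=-\tfrac{\rho\alpha^2}{2}\,w_1^2+\|\vw\|_1,\qquad \vw\in\mathbb{S}^{1}_{+},
$$
where $\ve^{\top}\vw=\|\vw\|_1$ because $\vw\ge\mathbf{0}$.

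For the $\vw$-step I would bypass the angular parametrization used in Theorem~\ref{thm:n=2} and argue directly with two elementary norm inequalities valid on $\mathbb{S}^{1}_{+}$: namely $w_1^2\le\|\vw\|_2^2=1$ and $\|\vw\|_1\ge\|\vw\|_2=1$. Adding the resulting estimates for the two summands of $G$ gives
$$
G(\vw)=-\tfrac{\rho\alpha^2}{2}\,w_1^2+\|\vw\|_1\ \ge\ -\tfrac{\rho\alpha^2}{2}+1,
$$
and equality forces $w_1^2=1$ together with $\|\vw\|_1=1$, i.e.\ $\vw=\ve_1$. Hence $\vw^{\star}=\ve_1$ is the unique minimizer of \eqref{w-star} and $G(\vw^{\star})=1-\tfrac{\rho\alpha^2}{2}$. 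The $r$-step then yields $r^{\star}=\langle\vx,\vw^{\star}\rangle=\alpha$, so the nonzero candidate is $\langle\vx,\vw^{\star}\rangle\vw^{\star}=\alpha\ve_1=\vx$. For the $d$-step, specializing \eqref{tmp:equi} from the proof of Theorem~\ref{thm:main} at $r=\langle\vx,\vw^{\star}\rangle$ and using $h_1(\mathbf{0})=0$ from \eqref{def:hp}, one gets $F(\vx)-F(\mathbf{0})=G(\vw^{\star})=1-\tfrac{\rho\alpha^2}{2}$. This quantity is positive, zero, or negative according as $\alpha<\sqrt{2/\rho}$, $\alpha=\sqrt{2/\rho}$, or $\alpha>\sqrt{2/\rho}$; feeding this into the trichotomy \eqref{x-star} gives $\mathrm{prox}_{\frac1\rho h_1}(\vx)$ equal to $\{\mathbf{0}\}$, $\{\mathbf{0},\vx\}$, and $\{\vx\}$ respectively, as claimed.

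No step is genuinely hard. The only place requiring care is the $\vw$-step, since its feasible set $\mathbb{S}^{1}_{+}$ is nonconvex; but the axial structure of $\vx=\alpha\ve_1$ collapses the problem, and the two norm inequalities above resolve it while simultaneously pinning down the uniqueness of $\vw^{\star}$, which is exactly what makes the $d$-step conclusion an equality of sets (otherwise one could only assert an inclusion). As a consistency check one can also argue without WRD: since $\vx$ lies on a coordinate axis, replacing any competitor $\vu$ by $\|\vu\|_2\,\ve_1$ increases neither $\tfrac{\rho}{2}\|\vu-\vx\|_2^2$ nor $h_1(\vu)$, so the problem reduces to the one-dimensional hard-thresholding problem $\min_{u\ge0}\{\tfrac{\rho}{2}(u-\alpha)^2+\|u\ve_1\|_0\}$ treated in Example~\ref{example:Haar}, which produces the same thresholds.
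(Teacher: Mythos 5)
Your proof is correct and follows essentially the same route as the paper: both run the WRD procedure, identify $\vw^{\star}=\ve_1$ in the $\vw$-step, and read off the trichotomy from the sign of $G(\ve_1)=1-\tfrac{1}{2}\rho\alpha^2$ via \eqref{x-star}. The only difference is in the micro-argument for the $\vw$-step --- the paper writes $G$ as a concave function of $w_1\in[0,1]$ (using $w_2=\sqrt{1-w_1^2}$) and compares the endpoint values $G(\ve_1)$ and $G(\ve_2)$, whereas you use the term-wise bounds $w_1^2\le\|\vw\|_2^2$ and $\|\vw\|_1\ge\|\vw\|_2$; both are one-line arguments, though yours has the small added benefit of making the uniqueness of $\vw^{\star}$ (and hence the set equality in the conclusion) explicit.
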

\begin{proof} \ \
The objective function of problem \eqref{w-star} $G$ associated with $h_1$ for the given $\vx$ is
$$
G(\vw)=-\frac{1}{2}\rho \alpha^2 w_1^2 + w_1 +w_2=-\frac{1}{2}\rho \alpha^2 w_1^2 + w_1 +\sqrt{1-w_1^2},
$$
where $w_1 \in [0,1]$. A direct calculation shows that both  functions $-\frac{1}{2}\rho \alpha^2 w_1^2$ and   $w_1 +\sqrt{1-w_1^2}$ are concave with respect to $w_1$. Together with the facts of $G(\ve_1)=-\frac{1}{2}\rho \alpha^2  + 1$ and $G(\ve_2)=1$, hence, $G$ achieves its global minimum at $\vw^\star=\ve_1$.

The $r$-step of the WRD procedure simply follows with $r^\star = \la \vx, \vw^\star \ra=\alpha$. At the $d$-step, we compare $F(r^\star \vw^\star)$ and $F(\vzero)$ via their difference $F(r^\star \vw^\star)-F(\vzero)=G(\vw^\star)=-\frac{1}{2}\rho \alpha^2  + 1$.
Our result of this theorem immediately follows from the above difference.
\end{proof}

We observe that Proposition~\ref{prop:h1:n=2-x2=0} corroborates the findings of Proposition~\ref{prop:region-crude-h1} for points lying on the $x_1$-axis. Further, $(\mathrm{prox}_{\frac{1}{\rho}h_1}(\vx))_1=\mathrm{prox}_{\frac{1}{\rho}|\cdot|_0}(x_1)$ for $\vx=\alpha \ve_1$.


For $\vx\in \R_{\downarrow}^2$ with $x_1 \neq 0$, let $G$ be the objective function of problem~\eqref{w-star} associated with $h_1$. We define $Q: [0, \frac{\pi}{4}] \rightarrow \R$ as
\begin{equation*}
Q(\theta):=G(\vw(\theta)) \quad \mbox{with} \quad \vw(\theta) =  \begin{bmatrix} \cos(\theta) \\ \sin(\theta)  \end{bmatrix}.
\end{equation*}
A direct computation yields
\begin{equation}\label{def:Q-h1-R2}
Q(\theta)=-\frac{1}{2}\rho \|\vx\|_2^2\cos^2\left(\theta-\frac{\alpha}{2}\right)+\sqrt{2}\sin\left(\theta+\frac{\pi}{4}\right),
\end{equation}
where the constant $\alpha$ is given by, with  $\kappa=\frac{x_2}{x_1} \in [0,1]$,
\begin{equation}\label{eq:alpha}
    \alpha = \left\{
\begin{array}{ll}
    \arctan \left(\frac{2\kappa}{1-\kappa^2}\right) \in \left[0, \frac{\pi}{2}\right), & \hbox{if $x_1>x_2$;} \\
    \frac{\pi}{2}, & \hbox{if $x_1=x_2$.}
  \end{array}
    \right.
\end{equation}
Then, solving problem~\eqref{w-star} involves  minimizing $Q$ over the interval  $[0, \frac{\pi}{4}]$.  The minimal value of $Q$ on this interval can be attained at $0$, $\pi/4$, or the critical points of $Q$. To determine these critical points, we examine the properties of  $Q'$, which is
$$
Q'(\theta)=\frac{1}{2}\rho \|\vx\|_2^2\sin(2\theta-\alpha)+\sqrt{2}\cos\left(\theta+\frac{\pi}{4}\right).
$$
We immediately observed that: first, the function $\sqrt{2}\cos(\theta+\frac{\pi}{4})$  monotonically decreases from $1$ to $0$ as $\theta$ varies from $0$ to $\frac{\pi}{4}$; second, the function $\frac{1}{2}\rho \|\vx\|_2^2\sin(2\theta+\alpha)$ monotonically increases from $\frac{1}{2}\rho\|\vx\|_2^2 \sin(-\alpha)=-\rho x_1 x_2$ to $0$ as $\theta$ ranges from $0$ to $\frac{\alpha}{2}$, and from $0$ to $\frac{1}{2}\rho \|\vx\|_2^2 \cos(\alpha)=\frac{1}{2}\rho(x_1^2 - x_2^2)$ as $\theta$ goes from $\frac{\alpha}{2}$ to $\frac{\pi}{4}$. Thus, $Q'$ is positive, and consequently  $Q$ is increasing on $[\frac{\alpha}{2}, \frac{\pi}{4}]$. Therefore, the optimal value of $Q$ will be achieved at zero or some point in the interval $[0, \frac{\alpha}{2}]$. Hence, we confine our analysis of $Q$ to this interval.


Remarkably, we can establish that $Q'$ has at most two zeros in the interval $[0, \frac{\alpha}{2}]$. This can be demonstrated by factorizing $Q'$ as a product of a positive function with a convex function:
$$
Q'(\theta)= \frac{1}{2}\rho \|\vx\|_2^2 \cos\left(\theta+\frac{\pi}{4}\right) L(\theta),
$$
where $L: [0, \frac{\alpha}{2}] \rightarrow \R$ is defined as:
\begin{equation}\label{eq:L}
L(\theta)=\frac{\sin(2\theta-\alpha)}{\cos(\theta+\frac{\pi}{4})}+\frac{2\sqrt{2}}{\rho \|\vx\|_2^2}.
\end{equation}
We proceed to demonstrate that $L$ is convex on the interval $[0, \frac{\alpha}{2}]$.

\begin{lemma}\label{lemma:L}
    For $\rho>0$ and  a nonzero vector $\vx \in \R_{\downarrow}^2$ with $\kappa=\frac{x_2}{x_1} \in [0, 1)$, the following statements for the function $L$ given by \eqref{eq:L} hold:
    \begin{itemize}
        \item[(i)] $L$ is convex on the interval $[0, \frac{\alpha}{2}]$, where $\alpha$ is given in \eqref{eq:alpha}.
        \item[(ii)] $L(0)$ is positive, zero, or negative if $\rho x_1x_2-1$ is negative, zero, or positive, respectively.   $L'(0)$ is nonnegative if $\kappa \le \frac{\sqrt{5}-1}{2}$ and  negative if $\kappa > \frac{\sqrt{5}-1}{2}$.
        \item[(iii)] $L$ has at most two roots on the interval $[0, \frac{\alpha}{2}]$.
    \end{itemize}
\end{lemma}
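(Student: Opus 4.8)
The plan is to first rewrite $L$ in a form that makes its convexity transparent, and then deduce the three assertions from it. Substituting $\phi = \theta + \tfrac{\pi}{4}$ converts $\sin(2\theta - \alpha)$ into $\sin\!\bigl(2\phi - \tfrac{\pi}{2} - \alpha\bigr) = -\cos(2\phi - \alpha)$; expanding $\cos(2\phi - \alpha) = (2\cos^2\phi - 1)\cos\alpha + 2\sin\phi\cos\phi\,\sin\alpha$ and dividing by $\cos\phi$ yields, after collecting terms,
$$
L(\theta) \;=\; \cos\alpha \;\sec\!\Bigl(\theta + \tfrac{\pi}{4}\Bigr) \;-\; 2\cos\!\Bigl(\theta + \tfrac{\pi}{4} - \alpha\Bigr) \;+\; \frac{2\sqrt 2}{\rho\,\|\vx\|_2^2}.
$$
This reformulation is the crux; everything else reads off from it.

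For (i), observe that on $\theta \in [0, \tfrac{\alpha}{2}]$ the argument $\phi := \theta + \tfrac{\pi}{4}$ ranges over $[\tfrac{\pi}{4}, \tfrac{\pi}{4} + \tfrac{\alpha}{2}] \subset [\tfrac{\pi}{4}, \tfrac{\pi}{2})$, since $\alpha \in [0, \tfrac{\pi}{2})$. On this range $\sec\phi$ is convex, as $(\sec\phi)'' = \sec\phi\,(\tan^2\phi + \sec^2\phi) > 0$. Likewise $\phi - \alpha$ ranges over $(-\tfrac{\pi}{4}, \tfrac{\pi}{4}]$, so $\cos(\phi - \alpha) > 0$ and therefore $-2\cos(\phi - \alpha)$, whose second derivative is $2\cos(\phi - \alpha)$, is strictly convex. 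Since $\cos\alpha \ge 0$, $L$ is a nonnegative combination of convex functions plus a constant, hence convex — in fact strictly convex, which I will reuse for (iii).

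For (ii), I would evaluate directly from the displayed identity: $L(0) = -\sqrt 2\,\sin\alpha + \tfrac{2\sqrt 2}{\rho\|\vx\|_2^2}$ and $L'(0) = \sqrt 2\,(2\cos\alpha - \sin\alpha)$. From $\tan\alpha = \tfrac{2\kappa}{1-\kappa^2}$ with $\alpha \in [0, \tfrac{\pi}{2})$ one has $\sin\alpha = \tfrac{2\kappa}{1+\kappa^2}$ and $\cos\alpha = \tfrac{1-\kappa^2}{1+\kappa^2}$; combined with $\|\vx\|_2^2 = x_1^2(1+\kappa^2)$ and $x_1 x_2 = \kappa x_1^2$ these give
$$
L(0) = \frac{2\sqrt 2\,(1 - \rho x_1 x_2)}{\rho\,x_1^2\,(1+\kappa^2)}, \qquad L'(0) = \frac{2\sqrt 2\,(1 - \kappa - \kappa^2)}{1+\kappa^2}.
$$
Thus the sign of $L(0)$ is that of $1 - \rho x_1 x_2$, i.e. opposite to that of $\rho x_1 x_2 - 1$, and $L'(0) \ge 0$ exactly when $1 - \kappa - \kappa^2 \ge 0$, i.e. when $\kappa \le \tfrac{\sqrt 5 - 1}{2}$, the nonnegative root of $\kappa^2 + \kappa - 1 = 0$; otherwise $L'(0) < 0$.

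For (iii), I would invoke the strict convexity established in (i): a strictly convex function cannot vanish at three distinct points $a < b < c$, since convexity would force $L(b) < \tfrac{c-b}{c-a}L(a) + \tfrac{b-a}{c-a}L(c) = 0$, contradicting $L(b) = 0$. Hence $L$ has at most two roots on $[0, \tfrac{\alpha}{2}]$. The only genuinely delicate step in the whole argument is the trigonometric simplification of the first paragraph; after that, everything rests on the convexity of $\sec$ on $(-\tfrac{\pi}{2}, \tfrac{\pi}{2})$, the convexity of $-\cos$ where $\cos \ge 0$, and routine evaluations.
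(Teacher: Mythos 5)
Your proposal is correct, and for item (i) it takes a genuinely different route from the paper. The paper proves convexity by computing $L''$ explicitly and arguing that its numerator $\tfrac{1}{2}\sin(2\theta-\alpha)(\sin(2\theta)-1)+2\cos\alpha$ and denominator $\cos^3(\theta+\tfrac{\pi}{4})$ are both positive on $[0,\tfrac{\alpha}{2}]$; you instead rewrite $L$ as $\cos\alpha\,\sec(\theta+\tfrac{\pi}{4})-2\cos(\theta+\tfrac{\pi}{4}-\alpha)+\tfrac{2\sqrt{2}}{\rho\|\vx\|_2^2}$ and exhibit it as a nonnegative multiple of a convex function plus a strictly convex function plus a constant. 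Your identity is correct (one can check that $2\cos(\phi-\alpha)\cos\phi=\cos(2\phi-\alpha)+\cos\alpha$ reconciles your decomposition with the paper's $L''$), and the structural decomposition arguably explains \emph{why} $L$ is convex rather than just verifying it. For item (ii) the two proofs coincide: both reduce to $L(0)=\tfrac{2\sqrt{2}}{\rho\|\vx\|_2^2}(1-\rho x_1x_2)$ and to the sign of $x_1^2-x_1x_2-x_2^2$, equivalently $1-\kappa-\kappa^2$ (the paper's displayed $L'(0)$ carries a spurious factor $\tfrac{1}{\rho}$, but this does not affect the sign argument; your $\tfrac{2\sqrt{2}(1-\kappa-\kappa^2)}{1+\kappa^2}$ is the correct value). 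For item (iii) your argument is cleaner and slightly more general: strict convexity alone forbids three zeros on any interval, whereas the paper additionally invokes the endpoint values $L(\tfrac{\alpha}{2})>0$ and $L'(\tfrac{\alpha}{2})>0$, which are not actually needed for the "at most two roots" conclusion (they are used later, in Proposition~\ref{prop:h1:n=2}, to locate the roots). The only degenerate case worth a remark in either proof is $\kappa=0$, where $\alpha=0$ and the interval collapses to a point, making all three claims trivial.
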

\begin{proof}
    Item (i). Notice that
\begin{eqnarray*}
L'(\theta)&=&\frac{2\cos(2\theta-\alpha)\cos(\theta+\frac{\pi}{4})+\sin(2\theta-\alpha)\sin(\theta+\frac{\pi}{4})}{\cos^2(\theta+\frac{\pi}{4})},\\
L''(\theta)&=&\frac{\frac{1}{2}\sin(2\theta-\alpha)(\sin(2\theta)-1)+2 \cos\alpha}{\cos^3(\theta+\frac{\pi}{4})}.
\end{eqnarray*}
Since both numerator and denominator of $L''$ are positive, $L''(\theta)>0$ for all $\theta \in [0, \frac{\alpha}{2}]$, hence, $L$ is strictly convex on this interval.

Item (ii). We notice that
$$
L(0)=\frac{2\sqrt{2}}{\rho \|\vx\|_2^2} (1-\rho x_1x_2), \quad
L'(0)=\frac{2\sqrt{2}}{\rho \|\vx\|_2^2} (x_1^2-x_2^2-x_1x_2).
$$
Hence, the statements in item (ii) hold.

Item (iii). We have
$$
L\left(\frac{\alpha}{2}\right)  = \frac{2\sqrt{2}}{\rho \|\vx\|_2^2} >0, \quad
L'\left(\frac{\alpha}{2}\right)  = \frac{2}{\cos(\frac{\alpha}{2}+\frac{\pi}{4})}>0.
$$
Together with the convexity of $L$, and the value of $L(0)$, we know that $L$ has at most two zeros on the interval $[0, \frac{\alpha}{2}]$.
\end{proof}

With these preliminaries, we can now present the solution to  problem~\eqref{w-star} associated with $h_1$ in the following theorem,  which provides the outcome of the $\vw$-step of the WRD procedure for the proximity operator of $h_1$.
\begin{proposition}\label{prop:h1:n=2}
For $\rho>0$ and  a nonzero vector $\vx \in \R_{\downarrow}^2$ with $\kappa=\frac{x_2}{x_1} \in [0, 1)$, let the function $Q$ be given by \eqref{def:Q-h1-R2}, and let the function $L$ be given by \eqref{eq:L}. Define $\alpha$ as in \eqref{eq:alpha}.
Then, the optimal solution $\vw^\star$ to problem \eqref{w-star} is represented as:
$$
\vw^\star = \begin{bmatrix} \cos(\theta^\star) \\ \sin(\theta^\star)  \end{bmatrix},
$$
where $\theta^\star$ is determined as follows:
\begin{itemize}
\item[(i)] Case  $\rho x_1 x_2<1$.  We choose
\begin{equation}\label{eq:Case1-last}
\theta^\star=\left\{
 \begin{array}{ll}
    0, & \hbox{if $\kappa \le \frac{\sqrt{5}-1}{2}$;} \\
    0, & \hbox{if $\frac{\sqrt{5}-1}{2}<\kappa<1$, $L(\theta_0) \ge 0$ with $L'(\theta_0)=0$;} \\
    \arg\min\{Q(\theta): \theta \in \{0, \theta_1\}\}, & \hbox{if $\frac{\sqrt{5}-1}{2}<\kappa<1$, $L(\theta_0) < 0$ with $L'(\theta_0)=0$.}
  \end{array}
\right.
\end{equation}
Here $\theta_1$ is the root of $L$ on the interval $(\theta_0, \frac{\alpha}{2})$.

\item[(ii)] Case  $\rho x_1 x_2=1$.  If $\kappa \le \frac{\sqrt{5}-1}{2}$, we choose $\theta^\star=0$;  Otherwise,  $\theta^\star$ is chosen to be the root of $L$ on $(0, \frac{\alpha}{2})$.

\item[(iii)] Case  $\rho x_1 x_2>1$. $\theta^\star$ is chosen to be the only root of $L$ on the interval $[0, \frac{\alpha}{2}]$.
\end{itemize}

\end{proposition}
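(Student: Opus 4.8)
The plan is to turn the whole problem into reading off the sign pattern of the strictly convex function $L$ from Lemma~\ref{lemma:L} and translating it into the monotonicity of $Q$ on $[0,\frac{\alpha}{2}]$, the interval to which the search for a minimizer has already been reduced. The key observation is the factorization $Q'(\theta)=\frac{1}{2}\rho\|\vx\|_2^2\cos\!\left(\theta+\frac{\pi}{4}\right)L(\theta)$: since $\theta\in[0,\frac{\alpha}{2}]\subseteq[0,\frac{\pi}{4})$ gives $\theta+\frac{\pi}{4}\in[\frac{\pi}{4},\frac{\pi}{2})$, the cosine factor is positive, so $Q'(\theta)$ has the same sign as $L(\theta)$ on this interval. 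Because $L$ is strictly convex there and hence has at most two zeros, its sign pattern on $[0,\frac{\alpha}{2}]$ determines the monotone pieces of $Q$, and the global minimizer of $Q$ on $[0,\frac{\alpha}{2}]$ is then the left endpoint $0$ or the right endpoint of a maximal subinterval on which $L\le 0$. I will use repeatedly the data collected in Lemma~\ref{lemma:L}: the sign of $L(0)$ equals the sign of $1-\rho x_1x_2$, the sign of $L'(0)$ equals the sign of $x_1^2-x_2^2-x_1x_2$ (so $L'(0)\ge 0$ exactly when $\kappa\le\frac{\sqrt{5}-1}{2}$), together with $L(\frac{\alpha}{2})>0$, $L'(\frac{\alpha}{2})>0$, and the monotonicity of $L'$.

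Next I would run the case split on the sign of $L(0)$, which is exactly the trichotomy $\rho x_1x_2<1$, $\rho x_1x_2=1$, $\rho x_1x_2>1$. In Case (iii), $L(0)<0<L(\frac{\alpha}{2})$ and strict convexity force a single zero $\theta^\star\in(0,\frac{\alpha}{2})$ with $L<0$ to its left and $L>0$ to its right, so $Q$ strictly decreases then strictly increases and $\theta^\star$ is the minimizer. In Case (ii), $L(0)=0$: if $L'(0)\ge 0$ then $L'\ge 0$ throughout by convexity, hence $L\ge 0$, $Q$ is nondecreasing, and $\theta^\star=0$; if $L'(0)<0$, that is $\kappa>\frac{\sqrt{5}-1}{2}$, then $L$ dips below zero just past $0$ and, since $L(\frac{\alpha}{2})>0$ and $L$ has at most two zeros, returns to zero exactly once at some $\theta^\star\in(0,\frac{\alpha}{2})$, giving $Q$ decreasing on $[0,\theta^\star]$ and increasing afterwards, so $\theta^\star$ is the minimizer.

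Case (i), $L(0)>0$, is the delicate one. If $\kappa\le\frac{\sqrt{5}-1}{2}$ then $L'(0)\ge 0$, so $L\ge L(0)>0$, $Q$ is strictly increasing, and $\theta^\star=0$. If $\frac{\sqrt{5}-1}{2}<\kappa<1$ then $L'(0)<0<L'(\frac{\alpha}{2})$, and the increasing function $L'$ has a unique zero $\theta_0\in(0,\frac{\alpha}{2})$, which is the unique minimizer of $L$ on this interval; the sign of $L(\theta_0)$ then decides the outcome. When $L(\theta_0)\ge 0$, $L\ge 0$ everywhere, $Q$ is nondecreasing, and $\theta^\star=0$. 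When $L(\theta_0)<0$, the function $L$ --- positive at $0$, negative at $\theta_0$, positive at $\frac{\alpha}{2}$, strictly convex --- has exactly one zero in $(0,\theta_0)$ and exactly one zero $\theta_1$ in $(\theta_0,\frac{\alpha}{2})$, and is positive, then negative, then positive on the corresponding subintervals; hence $Q$ increases, then decreases, then increases, so its minimum over $[0,\frac{\alpha}{2}]$ is $\min\{Q(0),Q(\theta_1)\}$, which is exactly the stated choice of $\theta^\star$. In every case $\theta^\star\in[0,\frac{\pi}{4}]$, so $\vw^\star=[\cos\theta^\star,\sin\theta^\star]^\top\in\mathbb{S}^1_{+}$, and by construction it minimizes $Q$ on $[0,\frac{\alpha}{2}]$, hence on $[0,\frac{\pi}{4}]$, hence it solves \eqref{w-star}.

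I expect the main obstacle to be the bookkeeping in the last subcase of Case (i): one must invoke strict convexity of $L$ to guarantee exactly two zeros, one on each side of $\theta_0$, so that $Q$ has a single interior local maximum and a single interior local minimum; only then does the global minimum reduce to the two-point comparison $\min\{Q(0),Q(\theta_1)\}$ rather than involving the interior local maximum or the first zero of $L$. Everything else is a routine propagation of the sign information from $L$ to $Q'$ and then to $Q$.
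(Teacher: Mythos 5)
Your proposal is correct and follows essentially the same route as the paper's own proof: reduce to the sign of $L$ on $[0,\frac{\alpha}{2}]$ via the factorization of $Q'$, then case on $L(0)$, $L'(0)$, and $L(\theta_0)$ using the convexity and endpoint data from Lemma~\ref{lemma:L}. Your treatment of the last subcase of Case (i) is in fact slightly more explicit than the paper's, spelling out why the global minimum reduces to the two-point comparison $\min\{Q(0),Q(\theta_1)\}$.
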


\begin{proof}\ \
Case  $\rho x_1 x_2<1$. That is, $L(0)>0$ by Lemma~\ref{lemma:L}. Then $Q'$ has no root if $L'(0)\ge 0$. In this situation, $L$ is positive, so is $Q'$ on  $[0, \frac{\alpha}{2}]$. Hence, we choose $\theta^\star=0$; If $L'(0)< 0$, since $L'\left(\frac{\alpha}{2}\right)>0$, there exists one and only one  point $\theta_0 \in (0, \frac{\alpha}{2})$ such that $L'(\theta_0)=0$. If $L(\theta_0)\ge 0$, $Q'$ has no root, we choose $\theta^\star=0$. If $L(\theta_0)< 0$, then $L$ has a unique root, say $\theta_1$, on the interval $(\theta_0, \frac{\alpha}{2})$. In this situation, we choose $\theta^\star=\arg\min\{Q(\theta): \theta \in \{0, \theta_1\}\}$. All situations are summarized in \eqref{eq:Case1-last}.

Case  $\rho x_1 x_2=1$. That is, $L(0)=0$  by Lemma~\ref{lemma:L}.   If $L'(0)\ge 0$, we choose $\theta^\star=0$. On the other hand, if $L'(0)< 0$, let $\theta_1$ be the only root of $L$ on the open interval $(0, \frac{\alpha}{2})$, then $\theta^\star=\theta_1$.

Case  $\rho x_1 x_2>1$. That is, $L(0)<0$ by Lemma~\ref{lemma:L} again.  Let $\theta_1$  be the only root on the open interval $(0, \frac{\alpha}{2})$. Then, $\theta^\star=\theta_1$, and  $Q$ achieves its global minimum at $\theta^\star$.
\end{proof}

Based on Proposition~\ref{prop:h1:n=2}, the set of $\R_{\downarrow}^2\setminus \{\alpha \ve: \alpha \in \mathbb{R}\}$ is split into three disjoint sets $I_1$, $I_2$, and $I_3$, as follows:
\begin{eqnarray*}
I_1&=&\{(x_1,x_2)\in \R_{\downarrow}^2: x_1>x_2, \rho x_1x_2<1\}\\
I_2&=&\{(x_1,x_2)\in \R_{\downarrow}^2: x_1>x_2, \rho x_1x_2=1\}\\
I_3&=&\{(x_1,x_2)\in \R_{\downarrow}^2: x_1>x_2, \rho x_1x_2>1\}.
\end{eqnarray*}
We further split $I_1$ as the union of $I_{1i}$, $i=1,2,3,4$ and $I_2$ as the union of $I_{2i}$, $i=1,2,3,4$ as follows:
$$
\begin{array}{ll}
I_{11}=\left\{(x_1,x_2)\in I_1: x_1>\sqrt{\frac{2}{\rho}}\right\}&
I_{21}=\left\{(x_1,x_2)\in I_2: x_1>\sqrt{\frac{2}{\rho}}\right\} \\
I_{12}=\left\{(x_1,x_2)\in I_1: x_1=\sqrt{\frac{2}{\rho}}\right\}&
I_{22}=\left\{\left(\sqrt{\frac{2}{\rho}}, \sqrt{\frac{1}{2\rho}}\right)\right\}\\
I_{13}=\left\{(x_1,x_2)\in I_1: \frac{\sqrt{5}+1}{2}x_2 \le x_1<\sqrt{\frac{2}{\rho}}\right\}&
I_{23}=\left\{(x_1,x_2)\in I_2: \sqrt{\frac{\sqrt{5}+1}{2\rho}}\le x_1<\sqrt{\frac{2}{\rho}}\right\} \\
I_{14}=\left\{(x_1,x_2)\in I_1: \frac{\sqrt{5}+1}{2}x_2 >x_1\right\}&
I_{24}=\left\{(x_1,x_2)\in I_2: \sqrt{\frac{1}{\rho}}<x_1<\sqrt{\frac{\sqrt{5}+1}{2\rho}}\right\}
\end{array}
$$

With the given sets, the proximity operator of $h_1$ from the WRD procedure is presented in the next theorem.
\begin{theorem}\label{thm:h1:n=2}
Let $\rho>0$. For $\vx \in I_1 \cup I_2$, we have
$$
\mathrm{prox}_{\frac{1}{\rho} h_1}(\vx)=\left\{
\begin{array}{ll}
\{x_1 \ve_1\}& \hbox{if $\vx \in I_{11}\cup I_{21}$;}  \\
\{\mathbf{0}, \sqrt{\frac{2}{\rho}} \ve_1\}& \hbox{if $\vx \in I_{12}\cup I_{22}$;}  \\
\{\mathbf{0}\} & \hbox{if $\vx \in I_{13}\cup I_{23}$;}  \\
\arg\min\{F(\vz): \vz\in\{\mathbf{0}, \langle \vx, \vw^\star\rangle \vw^\star\}\} & \hbox{if $\vx \in I_{14}\cup I_{24}$,}
\end{array}
\right.
$$
where $\vw^\star$ is from item (i) or item (ii) of Proposition~\ref{prop:h1:n=2}.

For $\vx \in I_3$, we have
$$
\mathrm{prox}_{\frac{1}{\rho} h_1}(\vx)=\arg\min\{F(\vz): \vz\in\{\mathbf{0}, \langle \vx, \vw^\star\rangle \vw^\star\}\},
$$
where $\vw^\star$ is from item (iii) of Proposition~\ref{prop:h1:n=2}.
\end{theorem}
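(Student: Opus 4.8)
The plan is to run the WRD procedure of Theorem~\ref{thm:main} and then do careful bookkeeping over the partition into the cells $I_{1i}$, $I_{2i}$ ($i=1,\dots,4$) and $I_3$. Fix $\vx\in\R_{\downarrow}^2$ that is not a multiple of $\ve$, so $x_1>x_2\ge 0$ and $x_1>0$; then $\R_{\downarrow}^2\setminus\{\alpha\ve:\alpha\in\R\}$ is the disjoint union $I_1\cup I_2\cup I_3$ according as $\rho x_1x_2$ is $<1$, $=1$, or $>1$. By Theorem~\ref{thm:main} and \eqref{x-star}, $\prox_{\frac1\rho h_1}(\vx)$ is recovered from a solution $\vw^\star$ of \eqref{w-star}, and since $h_1(\mathbf 0)=0$ the computation in the proof of Theorem~\ref{thm:main} gives $F(\langle\vx,\vw^\star\rangle\vw^\star)-F(\mathbf 0)=G(\vw^\star)=Q(\theta^\star)$. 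So the theorem reduces to (a) identifying $\vw^\star$ on each cell via Proposition~\ref{prop:h1:n=2}, and (b) pinning down the sign of $G(\vw^\star)$ so as to apply the $d$-step rule \eqref{x-star}.

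The key reduction is that on the six cells $I_{11},I_{12},I_{13},I_{21},I_{22},I_{23}$ the ratio $\kappa=x_2/x_1$ satisfies $\kappa\le\tfrac{\sqrt5-1}{2}$. On $I_{13}$ this is the defining inequality $\tfrac{\sqrt5+1}{2}x_2\le x_1$, and on $I_{23}$ it follows from $x_2=1/(\rho x_1)$ together with $x_1\ge\sqrt{(\sqrt5+1)/(2\rho)}$; on $I_{11},I_{12}$ and on $I_{21},I_{22}$ the conditions $\rho x_1x_2\le 1$ and $x_1\ge\sqrt{2/\rho}$ force $x_2\le\sqrt{1/(2\rho)}$ and hence $\kappa\le\tfrac12<\tfrac{\sqrt5-1}{2}$ — this also shows $I_{11}\cup I_{12}$ is disjoint from $I_{14}$ and finishes the routine check that the listed cells tile $I_1\cup I_2\cup I_3$. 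Given $\kappa\le\tfrac{\sqrt5-1}{2}$, items (i)--(ii) of Proposition~\ref{prop:h1:n=2} yield $\theta^\star=0$, i.e. $\vw^\star=\ve_1$ and $\langle\vx,\vw^\star\rangle\vw^\star=x_1\ve_1$, with $G(\vw^\star)=G(\ve_1)=1-\tfrac\rho2 x_1^2$. The sign of $1-\tfrac\rho2 x_1^2$ is negative on $I_{11}\cup I_{21}$ ($x_1>\sqrt{2/\rho}$), zero on $I_{12}\cup I_{22}$ ($x_1=\sqrt{2/\rho}$), and positive on $I_{13}\cup I_{23}$ ($x_1<\sqrt{2/\rho}$); plugging this into \eqref{x-star} gives the first three lines of the claimed formula.

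On the remaining cells I simply invoke the other cases of Proposition~\ref{prop:h1:n=2}. The sets $I_{14}$ and $I_{24}$ are exactly the parts of $I_1$ and $I_2$ with $\kappa>\tfrac{\sqrt5-1}{2}$ (for $I_{24}$, rewrite $\sqrt{1/\rho}<x_1<\sqrt{(\sqrt5+1)/(2\rho)}$ as $\tfrac{\sqrt5-1}{2}<\kappa<1$ via $\kappa=1/(\rho x_1^2)$), so items (i)--(ii) supply $\vw^\star=[\cos\theta^\star,\sin\theta^\star]^\top$, while $I_3$ is covered by item (iii). On these three cells no claim is made about the sign of $G(\vw^\star)$, so \eqref{x-star} is literally the assertion $\prox_{\frac1\rho h_1}(\vx)=\arg\min\{F(\vz):\vz\in\{\mathbf 0,\langle\vx,\vw^\star\rangle\vw^\star\}\}$, which closes the proof.

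I do not expect a deep obstacle: the real content already lives in Proposition~\ref{prop:h1:n=2}, and the remaining work is the elementary-but-fiddly verification that the ten cells are pairwise disjoint, exhaust $\R_{\downarrow}^2\setminus\{\alpha\ve\}$, and satisfy $\kappa\le\tfrac{\sqrt5-1}{2}$ on the six ``$\theta^\star=0$'' cells. The one point requiring a word of care is the boundary case in item (i) of Proposition~\ref{prop:h1:n=2} where $Q(0)=Q(\theta_1)$: there $G$ has two minimizers, so $\prox_{\frac1\rho h_1}(\vx)$ may in fact contain both $x_1\ve_1$ and $\langle\vx,\vw(\theta_1)\rangle\vw(\theta_1)$ (plus $\mathbf 0$ when $G(\vw^\star)=0$) — a lower-dimensional refinement inside $I_{14}$ that is still subsumed by \eqref{x-star}.
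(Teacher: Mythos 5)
Your proposal is correct and follows essentially the same route as the paper: run the WRD procedure, take $\vw^\star$ from Proposition~\ref{prop:h1:n=2} in the $\vw$-step, and read off the $d$-step from the sign of $G(\vw^\star)=1-\tfrac{\rho}{2}x_1^2$ on the cells where $\theta^\star=0$. You actually supply more detail than the paper does (the explicit verification that $\kappa\le\tfrac{\sqrt5-1}{2}$ on the six cells $I_{11},I_{12},I_{13},I_{21},I_{22},I_{23}$, and the caveat about the tie $Q(0)=Q(\theta_1)$ inside $I_{14}$), where the paper simply says the rest follows from Proposition~\ref{prop:h1:n=2}.
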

\begin{proof}\ \ The $\vw$-step of the WRD procedure provides $\vw^\star$, the solution of optimization problem~\eqref{w-star} associated with the function $h_1$ by Proposition~\ref{prop:h1:n=2}.  The $r$-step simply follows with $r^\star = \la \vx, \vw^\star \ra$. At the $d$-step, we compare $F(r^\star \vw^\star)$ and $F(\vzero)$ with $F$ defined in \eqref{def:F}. Note that
$$
F(r^\star \vw^\star)-F(\vzero)=G(\vw^\star).
$$
If $G(\vw^\star)$ is positive, the zero is in $\mathrm{prox}_{\frac{1}{\rho} h_1}(\vx)$; if $G(\vw^\star)$ is negative, $r^\star \vw^\star$ is in $\mathrm{prox}_{\frac{1}{\rho} h_1}(\vx)$; if $G(\vw^\star)$ is zero, both the zero vector and $r^\star \vw^\star$ are in $\mathrm{prox}_{\frac{1}{\rho} h_1}(\vx)$. The rest of the result follows directly from Proposition~\ref{prop:h1:n=2}.
\end{proof}

Figure~\ref{fig:Prox_h1_Sparse_Region-2}(a) illustrates the region where the proximity operator $\mathrm{prox}_{\frac{1}{\rho} h_1}$ maps points to the origin.  According to Theorem~\ref{thm:h1-R2-ae},  all points on the line segment from the origin to $(\sqrt{{\sqrt{2}}/{\rho}},\sqrt{{\sqrt{2}}/{\rho}})$ will be mapped to the origin by $\mathrm{prox}_{\frac{1}{\rho} h_1}$. Additionally, as stated in Theorem~\ref{thm:h1:n=2}, all points under the line $x_2=\frac{\sqrt{5}-1}{2}x_1$ in the red region are mapped to the origin by $\mathrm{prox}_{\frac{1}{\rho} h_1}$. The remaining points in both red and blue colors are obtained numerically with the assistance of Theorem~\ref{thm:h1:n=2}.

\begin{figure}[ht]
    \centering
    \begin{tabular}{cc}
    \includegraphics[width=4.5cm]{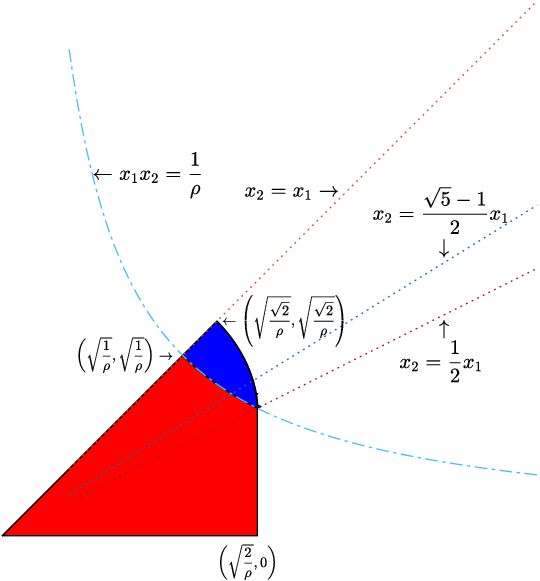}&
    \includegraphics[width=4.5cm]{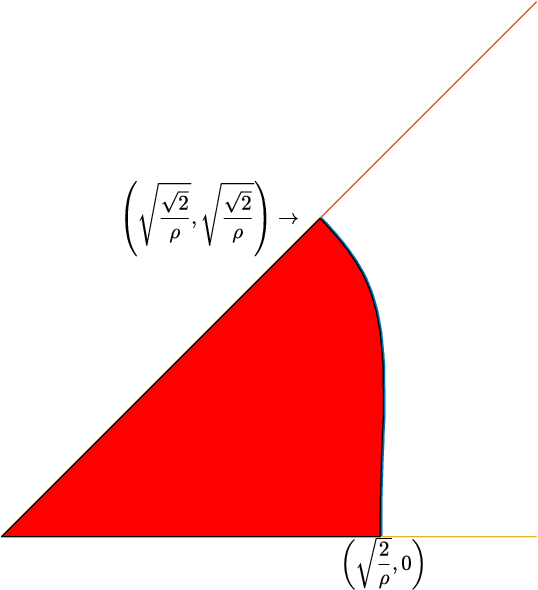}\\

    (a)&(b)
    \end{tabular}
    \caption{(a) The proximity operator $\mathrm{prox}_{\frac{1}{\rho} h_1}$ will map all points in the shaded region to the origin; (b) Numerical result for the region which will be mapped to the origin by the  $\mathrm{prox}_{\frac{1}{\rho} h_1}$.}
    \label{fig:Prox_h1_Sparse_Region-2}
\end{figure}

\subsection{General case: the proximity operator of $h_1$ on $\mathbb{R}^n$}
Here, we demonstrate that if the last $k$ entries of $\vx\in \mathbb{R}^n_\downarrow$ are zero, then the last $k$ entries of $\vw^\star$, the optimal solution to problem \eqref{w-star}, are zero as well. Leveraging this result, we proceed by assuming that all entries of $\vx \in \mathbb{R}^n_\downarrow$ are all nonzero. The primary outcome of this subsection is the transformation of problem \eqref{w-star} into the one with same objective function but constrained on a convex set. The modified problem can be addressed using the nonconvex gradient projection algorithm in \cite{Attouch-Bolte-Svaiter:MP:13}. Subsequently, we introduce an algorithm for computing the proximity operator of $h_1$ on $\mathbb{R}^n$.

\begin{theorem}\label{thm:h1:n>2-xn=0}
For $\rho>0$ and $\vx \in \R_{\downarrow}^n$, suppose that the last $k \ge 1$ entries of $\vx$ are zeros,  that is,
$$
\vx=\begin{bmatrix}
    \vx_{[n-k]} \\ \mathbf{0}
\end{bmatrix},
$$
Then, for an optimal solution $\vw^\star$ to problem \eqref{w-star}, we have $\vw^\star_{[n]\setminus [n-k]}=\mathbf{0}$, that is,  the last $k$ entries of $\vw^\star$ are zero.
\end{theorem}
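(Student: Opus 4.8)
The plan is to split the feasible set $\mathbb{S}^{n-1}_{+}$ of \eqref{w-star} according to the $\ell_2$-weight that a unit vector places on its last $k$ coordinates — the block on which $\vx$ vanishes — and to show that any positive weight there is strictly suboptimal. Write $\vw=(\vw_1,\vw_2)$ with $\vw_1\in\R^{n-k}_{+}$ and $\vw_2\in\R^{k}_{+}$, and put $s=\|\vw_1\|_2$, $t=\|\vw_2\|_2$, so that $s^2+t^2=1$. Since the last $k$ entries of $\vx$ vanish, $\langle\vx,\vw\rangle=\langle\vx_{[n-k]},\vw_1\rangle$ depends on $\vw_1$ only, and the objective of \eqref{w-star} for $h_1$ becomes
$$
G(\vw)=-\tfrac{\rho}{2}\langle\vx_{[n-k]},\vw_1\rangle^2+\|\vw_1\|_1+\|\vw_2\|_1 .
$$
We may assume $x_1=\|\vx\|_\infty>0$, since otherwise $\vx=\mathbf{0}$ and the claim is trivial.

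Holding $s$ (equivalently $t$) fixed, I would carry out the two minimizations separately. For $\vw_2\ge\mathbf{0}$ one has $\|\vw_2\|_1\ge\|\vw_2\|_2=t$, with equality when $\vw_2$ is supported on a single coordinate, so $\min\{\|\vw_2\|_1:\ \vw_2\ge\mathbf{0},\ \|\vw_2\|_2=t\}=t$. For the $\vw_1$-part, the substitution $\vw_1=s\vu$ with $\vu\in\mathbb{S}^{n-k-1}_{+}$ turns $\min\{-\tfrac{\rho}{2}\langle\vx_{[n-k]},\vw_1\rangle^2+\|\vw_1\|_1:\ \vw_1\ge\mathbf{0},\ \|\vw_1\|_2=s\}$ into $s\,\psi(s)$, where
$$
\psi(s):=\min\Bigl\{-\tfrac{\rho}{2}\,s\,\langle\vx_{[n-k]},\vu\rangle^2+\|\vu\|_1:\ \vu\in\mathbb{S}^{n-k-1}_{+}\Bigr\}.
$$
Partitioning $\mathbb{S}^{n-1}_{+}$ by the value of $s$ then yields $\min_{\vw\in\mathbb{S}^{n-1}_{+}}G(\vw)=\min_{s\in[0,1]}\bigl(s\,\psi(s)+\sqrt{1-s^2}\bigr)$.

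The crux is that this one-dimensional minimum is attained only at $s=1$. I would use two elementary facts: (a) $\psi$ is non-increasing on $[0,1]$, because raising $s$ only decreases each term $-\tfrac{\rho}{2}s\langle\vx_{[n-k]},\vu\rangle^2$; and (b) $\psi(1)\le -\tfrac{\rho}{2}x_1^2+1<1$, by testing $\vu=\ve_1$ and using $x_1>0$. Then for $s\in[0,1)$, applying (a) in the form $s\,\psi(s)\ge s\,\psi(1)$, together with $\sqrt{1-s^2}=\sqrt{(1-s)(1+s)}\ge 1-s$ and (b),
$$
s\,\psi(s)+\sqrt{1-s^2}-\psi(1)\ \ge\ \sqrt{1-s^2}-(1-s)\psi(1)\ \ge\ (1-s)\bigl(1-\psi(1)\bigr)\ >\ 0,
$$
while $s\,\psi(s)+\sqrt{1-s^2}=\psi(1)$ at $s=1$. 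Hence $\min_{\vw\in\mathbb{S}^{n-1}_{+}}G(\vw)=\psi(1)$, attained only when $s=1$.

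To conclude, let $\vw^\star=(\vw^\star_1,\vw^\star_2)$ be any optimal solution of \eqref{w-star} and set $s^\star=\|\vw^\star_1\|_2$. The decomposition above gives $G(\vw^\star)\ge s^\star\psi(s^\star)+\sqrt{1-(s^\star)^2}$, and since $G(\vw^\star)=\psi(1)$ the strict inequality just established forces $s^\star=1$, i.e.\ $\vw^\star_2=\vw^\star_{[n]\setminus[n-k]}=\mathbf{0}$. The main obstacle is the crux step: a naive substitution that zeroes $\vw_2$ and renormalizes $\vw_1$ need not decrease $G$, since $\|\vw_1\|_1$ may grow by up to a factor $\sqrt{n-k}$; the monotonicity of $\psi$ is exactly what allows re-optimizing $\vw_1$ on the larger sphere instead. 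The remaining manipulations are routine.
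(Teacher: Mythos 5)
Your proof is correct, but it takes a genuinely different route from the paper's. The paper reduces to $k=1$ and iterates: it substitutes $w_n=\sqrt{1-\sum_{i<n}w_i^2}$ to get a function $H=H_1+H_2$ on the ball $\mathbb{B}^{n-1}_{+}$, observes that both summands are concave so the minimum sits on the boundary, rules out the origin by comparing with $\ve_1$, and then runs a one-dimensional concavity argument along the ray $\lambda\widetilde{\vw}^\star_{[n-1]}$ to force $\|\vw^\star_{[n-1]}\|_2=1$. You instead keep the spherical constraint, split $\vw$ by the $\ell_2$-mass $s$ it places on the support of $\vx$, perform the partial minimizations in closed form to reduce everything to $\min_{s\in[0,1]}\bigl(s\,\psi(s)+\sqrt{1-s^2}\bigr)$, and finish with the elementary inequalities $\psi(s)\ge\psi(1)$, $\psi(1)<1$, and $\sqrt{1-s^2}\ge 1-s$. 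Your version buys three things: it handles all $k\ge 1$ in one pass rather than by iterated dimension reduction; it yields the explicit positive gap $(1-s)(1-\psi(1))$, so the conclusion holds for \emph{every} optimal solution with no separate argument excluding degenerate boundary points of the ball; and it avoids the slightly delicate claim that a concave function on $\mathbb{B}^{n-1}_{+}$ is minimized on the sphere (the boundary of that set also contains the faces $w_i=0$ with $\|\vw\|_2<1$, which is why the paper needs its extra ray argument). Your closing remark correctly diagnoses why the naive ``zero out $\vw_2$ and renormalize'' substitution fails and why re-optimizing via $\psi$ is the right fix. One small caveat: for $\vx=\mathbf{0}$ the claim is not ``trivial'' --- it is false, since every $\ve_i$ is then optimal --- but the theorem implicitly assumes $\vx\neq\mathbf{0}$ (the paper's own proof likewise uses $x_1>0$ when evaluating $H(\ve_1)$), so this does not affect the argument; it would be cleaner to say the hypothesis $x_1>0$ is implicit in the statement rather than that the degenerate case is trivial.
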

\begin{proof} \ \ The proof hinges on iteratively reducing the dimension by one up to $k$ steps.  Without loss of generality, we assume that $k=1$. Let $F$ denote the objective function of problem \eqref{w-star} defined on $\mathbb{S}^{n-1}_{+}$. Throughout this proof, we consistently treat $\vw_{[n-1]}$ as the truncation of $\vw$ from its first $(n-1)$ entries.

Define: $H: \mathbb{B}^{n-1}_{+}(\mathbf{0},1) \rightarrow \mathbb{R}$ as $H(\vw_{[n-1]}):=G(\vw)$. Considering the last entry of $\vx$ being zero, we have
\begin{eqnarray*}
H(\vw_{[n-1]})&=&\underbrace{-\frac{1}{2}\rho \langle \vx_{[n-1]}, \vw_{[n-1]}\rangle^2}_{H_1(\vw_{[n-1]})} + \underbrace{\sum_{i=1}^{n-1} w_i +\sqrt{1-\sum_{i=1}^{n-1}w_i^2}}_{H_2(\vw_{[n-1]})}.
\end{eqnarray*}
We can  verify that  both $H_1$ and $H_2$ are concave functions over the domain $\mathbb{B}^{n-1}_{+}$, hence, the minimal value of $H_1+H_2$ will be achieved at $\vw^\star_{[n-1]}$ on the boundary of the ball.


We remark that $\vw^\star_{[n-1]}$ cannot be the zero vector. If so, $H(\vw^\star_{[n-1]})=H(\mathbf{0})=1$. However, $H(\ve_1)=-\frac{1}{2}\rho x_1^2+1<1$, which contradicts $\vw^\star_{[n-1]}$ being the minimal solution to $H$.

Next, we show that $\vw^\star_{[n-1]}$ must be a unit vector, that is, $\vw^\star_{[n-1]} \in \mathbb{S}^{n-2}_{+}$. If not, assume that $\|\vw^\star_{[n-1]}\|_2<1$, we can show that there exists a better solution on the boundary of $\mathbb{B}^{n-1}_{+}(\mathbf{0},1) $, which contradicts the optimality of $\vw_{[n-1]}^\star$. Write $\tilde{\vw}^\star_{[n-1]}=\frac{\vw^\star_{[n-1]}}{\|\vw^\star_{[n-1]}\|_2}$, we define $C:[0,1] \rightarrow \R$ as follows:
$$
C(\lambda):=H_1(\lambda \tilde{\vw}^\star_{[n-1]})+H_2(\lambda \tilde{\vw}^\star_{[n-1]}).
$$
Clearly,
$$
C(\lambda)=-\frac{1}{2}\rho \langle \vx_{[n-1]}, \tilde{\vw}^\star_{[n-1]}\rangle^2 \lambda^2 + \langle \ve, \tilde{\vw}^\star_{[n-1]}\rangle \lambda +\sqrt{1-\lambda^2},
$$
which is not constant, and concave with respect to the variable $\lambda$. Therefore, the minimal value can only be achieved at $\lambda=1$.  Therefore, $\|\vw^\star_{[n-1]}\|_2=1$. In other words, the $n$-th entry of the optimal solution $\vw^\star$ to problem \eqref{w-star} must be $0$. This completes the proof.
\end{proof}

Note that for problem \eqref{w-star}, the feasible set $\Ss_+^{n-1}$ is nonconvex. This nonconvex nature poses significant challenges in algorithm development. To address this, we present the following result which allows us to consider the problem within the confines of a convex set, specifically $\mathbb B_{+}^n(\mathbf{0}, 1)$. This approach provides a more tractable pathway for algorithmic development and analysis.

\begin{theorem}\label{thm:ball-condition:h1}
Let $\vx \in \R_{\downarrow}^n$ and assume that its last entry $x_n$ is nonzero. Let $\vw^\star$ be an optimal solution to the following optimization problem
\begin{equation} \label{eqn:proxrewrite-convex}
    \min\left\{\frac{1}{2}\vw^{\top}\mA_{\rho, \vx} \vw+\ve^{\top} \vw: \vw \in \mathbb{B}_{\downarrow}^n(\mathbf{0},1)\right\},
\end{equation}
where $\mA_{\rho, \vx}$ is given by \eqref{def:A1}. Then, $\vw^\star$ is either the origin or the optimal solution to the optimization problem~\eqref{w-star}. Furthermore, we have
\begin{equation}\label{eq:h1_prox-final}
    \langle \vx, \vw^\star\rangle \vw^\star \in \mathrm{prox}_{\frac{1}{\rho}h_1}(\vx).
\end{equation}
\end{theorem}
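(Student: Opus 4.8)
The plan is to exploit that the objective $G(\vw)=\frac12\vw^\top\mA_{\rho,\vx}\vw+\ve^\top\vw=-\frac{\rho}{2}\langle\vx,\vw\rangle^2+\ve^\top\vw$ is concave on $\R^n$, so that \eqref{eqn:proxrewrite-convex} is a concave minimization over the compact convex set $\mathbb{B}_{\downarrow}^n(\mathbf{0},1)$. First I would show that any minimizer $\vw^\star$ is either $\mathbf{0}$ or a unit vector, then identify it as a solution of \eqref{w-star}, and finally transport the conclusion to $\mathrm{prox}_{\frac{1}{\rho}h_1}(\vx)$ via Theorem~\ref{thm:main}. The first step is the crucial one. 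Suppose $0<\|\vw^\star\|_2=s<1$. Since $\vx\in\Rnd$ with $x_n>0$, every entry of $\vx$ is positive, so $\langle\vx,\vw^\star\rangle>0$; put $A:=-\frac{\rho}{2}\langle\vx,\vw^\star\rangle^2<0$ and $B:=\ve^\top\vw^\star>0$, giving $G(\vw^\star)=A+B$. For the rescaled point $\hat\vw:=\vw^\star/s\in\mathbb{S}^{n-1}\cap\Rnd\subseteq\mathbb{B}_{\downarrow}^n(\mathbf{0},1)$ we have $G(\hat\vw)=As^{-2}+Bs^{-1}$, so that $G(\hat\vw)-G(\vw^\star)=(s^{-1}-1)\bigl(As^{-1}+G(\vw^\star)\bigr)$. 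Because $\mathbf{0}$ is feasible, $G(\vw^\star)\le G(\mathbf{0})=0$, and $As^{-1}<0$; hence the bracket is negative while $s^{-1}-1>0$, so $G(\hat\vw)<G(\vw^\star)$, contradicting optimality. Thus $\vw^\star\in\{\mathbf{0}\}\cup(\mathbb{S}^{n-1}\cap\Rnd)$.

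Next I would record a rearrangement fact: $\min\{G(\vw):\vw\in\mathbb{S}^{n-1}_{+}\}=\min\{G(\vw):\vw\in\mathbb{S}^{n-1}\cap\Rnd\}$. Indeed, for $\vw\in\mathbb{S}^{n-1}_{+}$, sorting its entries in nonincreasing order leaves $\ve^\top\vw$ unchanged and, by the rearrangement inequality applied to the nonnegative vectors $\vx\in\Rnd$ and $\vw$, does not decrease $\langle\vx,\vw\rangle$ (which is $\ge0$), hence does not increase $-\frac{\rho}{2}\langle\vx,\vw\rangle^2$; so $G$ does not increase. Combining with the first step: if $\vw^\star\neq\mathbf{0}$ then $\|\vw^\star\|_2=1$ and $\vw^\star$ is feasible for \eqref{w-star}, and since $\mathbb{S}^{n-1}\cap\Rnd\subseteq\mathbb{B}_{\downarrow}^n(\mathbf{0},1)$ we get $G(\vw^\star)\le\min\{G(\vw):\vw\in\mathbb{S}^{n-1}\cap\Rnd\}=\min\{G(\vw):\vw\in\mathbb{S}^{n-1}_{+}\}$, so $\vw^\star$ solves \eqref{w-star}. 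This establishes the first assertion of the theorem.

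For the membership \eqref{eq:h1_prox-final}, I would use the identity $F(\langle\vx,\vw\rangle\vw)-F(\mathbf{0})=G(\vw)$ for $\vw\in\mathbb{S}^{n-1}_{+}$ (this is \eqref{tmp:equi} with $r=\langle\vx,\vw\rangle$ and $h_1(\mathbf{0})=0$), which also holds trivially at $\vw=\mathbf{0}$. If $\vw^\star=\mathbf{0}$, then $\min_{\mathbb{B}_{\downarrow}^n}G=0$, so by the second paragraph $\min\{G(\vw):\vw\in\mathbb{S}^{n-1}_{+}\}\ge0$; for any solution $\vv^\star$ of \eqref{w-star} this gives $F(\langle\vx,\vv^\star\rangle\vv^\star)\ge F(\mathbf{0})$, and Theorem~\ref{thm:main} yields $\mathbf{0}\in\mathrm{prox}_{\frac{1}{\rho}h_1}(\vx)$, which equals $\langle\vx,\vw^\star\rangle\vw^\star$. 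If $\vw^\star\neq\mathbf{0}$, it solves \eqref{w-star}, and optimality over $\mathbb{B}_{\downarrow}^n(\mathbf{0},1)$ forces $G(\vw^\star)\le G(\mathbf{0})=0$, i.e. $F(\langle\vx,\vw^\star\rangle\vw^\star)\le F(\mathbf{0})$, so Theorem~\ref{thm:main} gives $\langle\vx,\vw^\star\rangle\vw^\star\in\mathrm{prox}_{\frac{1}{\rho}h_1}(\vx)$. Either way \eqref{eq:h1_prox-final} holds.

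The main obstacle is the first paragraph: a priori the minimizer of the relaxed problem could lie in the interior of the ball or on a non-extreme part of its boundary, and the scaling computation is precisely what rules this out. It is also the only place the hypothesis $x_n\neq0$ is used — it guarantees $\langle\vx,\vw^\star\rangle>0$ for any feasible $\vw^\star\neq\mathbf{0}$, which is what makes $A<0$ and hence forces $G$ to strictly decrease under outward rescaling. Everything afterwards is routine bookkeeping through the WRD decomposition of Theorem~\ref{thm:main} and the rearrangement inequality.
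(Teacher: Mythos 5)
Your proof is correct and follows essentially the same route as the paper's: the decisive step in both is that $G$ restricted to the ray $\lambda\mapsto\lambda\vw^\star/\|\vw^\star\|_2$ is a strictly concave quadratic (your explicit difference computation is just an unrolled version of the paper's endpoint argument for the function $C(\lambda)$ there), which forces the minimizer of \eqref{eqn:proxrewrite-convex} to be either $\mathbf{0}$ or a unit vector, after which the comparison $F(\langle\vx,\vw\rangle\vw)-F(\mathbf{0})=G(\vw)$ and Theorem~\ref{thm:main} give the prox membership. One point in your favor: you explicitly bridge the mismatch between the feasible set $\mathbb{B}_{\downarrow}^n(\mathbf{0},1)$ of the relaxed problem and the feasible set $\mathbb{S}^{n-1}_{+}$ of \eqref{w-star} via the rearrangement inequality, a step the paper's proof passes over silently, so your write-up is if anything the more complete one.
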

\begin{proof}\ \ The proof is trivial if $\vw^\star$ is the zero vector. If $\vw^\star \neq \mathbf{0}$, we now show that $\|\vw^\star\|_2=1$, i.e. $\vw^\star$ is  the optimal solution to the optimization problem~\eqref{w-star}. If not, we denote the objection function of problem~\eqref{eqn:proxrewrite-convex} by $H$, that is,
$$
H(\vw)=\frac{1}{2}\vw^{\top}\mA_{\rho, \vx} \vw+\ve^{\top} \vw.
$$
Set  $\tilde{\vw}^\star: = \frac{\vw^\star}{\|\vw^\star\|_2}$,
and define $C:[0,1] \rightarrow \mathbb{R}$ as follows:
$$
C(\lambda)=H(\lambda \tilde{\vw}^\star)=-\lambda\left(\frac{1}{2}\rho \langle \vx, \tilde{\vw}^\star \rangle^2 \lambda-\|\tilde{\vw}^\star\|_1\right).
$$
Clearly, $C$ achieves its optimal value at either $\lambda=0$ or $1$.    Hence, $$H({\vw}^\star)=C(\|\vw^\star\|_2) > \min\{C(0), C(1)\} = \min\{H(\mathbf{0}), H(\tilde{\vw}^\star)\}.$$
We conclude that $\vw^\star$ is either the origin or the optimal solution to the optimization problem~\eqref{w-star}.

Finally, we show the inclusion \eqref{eq:h1_prox-final} holds. If $\vw^\star=\mathbf{0}$, then, for all $\vw \in \mathbb{S}^{n-1}_{+}$,
$$0=H(\mathbf{0}) \le H(\vw) = G(\vw),
$$
where $G$ is the objective function of problem~\eqref{w-star}. Therefore, no matter which the optimal point for problem~\eqref{w-star} is, we know $\mathbf{0}\in \mathrm{prox}_{\frac{1}{\rho}h_1}(\vx)$.

If $\vw^\star \neq \mathbf{0}$, then $\vw^\star$ is the optimal solution to problem~\eqref{w-star} as well. Hence $\vw^\star$ is the output of the $\vw$-step of the WRD procedure and  $G(\vw^\star)<0$. Obviously,  $\langle \vx, \vw^\star\rangle \vw^\star \in \mathrm{prox}_{\frac{1}{\rho}h_1}(\vx)$ by the $r$-step and $d$-step of the WRD procedure. We conclude that the inclusion \eqref{eq:h1_prox-final} holds.
\end{proof}

Based on Theorem~\ref{thm:ball-condition:h1}, computing $\mathrm{prox}_{\frac{1}{\rho}h_1}(\vx)$ is resorting to solving optimization problem~\eqref{eqn:proxrewrite-convex}. This problem has a concave objective function restricted on a convex set. A popular algorithm for solving problem~\eqref{eqn:proxrewrite-convex} is called nonconvex gradient projection algorithm as follows: with an initial guess $\vw^{(0)}$, iterative
\begin{equation}\label{eq:iterate-w}
    \vw^{(k+1)} = P_{\mathbb{B}_{\downarrow}^n(\mathbf{0},1)}\left(\vw^{(k)}-\frac{1}{2\rho\|\vx\|_2^2}(\mA_{\rho, \vx} \vw^{(k)} + \ve)\right),
\end{equation}
where $P_{\mathbb{B}_{\downarrow}^n(\mathbf{0},1)}$ is the projection operator onto the set $\mathbb{B}_{\downarrow}^n(\mathbf{0},1)$. Since $\mathbb{B}_{\downarrow}^n(\mathbf{0},1)$ is a closed and bounded semi-algebraic convex subset of $\mathbb{R}^n$ and the gradient of the objective function of problem~\eqref{eqn:proxrewrite-convex} is $\mA_{\rho, \vx} \vw + \ve$ with Lipschtiz constant $\rho\|\vx\|_2^2$, the sequence $\{\vw^{(k)}\}_{k \in \mathbb{N}}$ converges, see, for example \cite[Theorem 5.3]{Attouch-Bolte-Svaiter:MP:13}.

We are ready now to present our algorithm for computing  $\prox_{\frac{1}{\rho} h_1}$ based on our WRD procedure for arbitrary $\vx \in \mathbb{R}^n$.  This algorithm is presented in Algorithm~\ref{alg:prox_h1}.
\begin{algorithm}
\begin{algorithmic}[1]
\State \textbf{Input:} Vector $\vx \in \mathbb{R}^n$, parameter $\rho > 0$, and an initial guess $\vw^{(0)}$
\State \textbf{Output:} The proximal operator $\text{prox}_{\frac{1}{\rho} h_1}(\vx)$
\Procedure{}{WRD Procedure}
    \State Sort and convert $\vx$ into $\mathbb{R}^n_{\downarrow}$ via a signed permutation matrix $\mP$.
    \State Trim  $\vx$ if necessary by Theorem~\ref{thm:h1:n>2-xn=0}
    \State {Generate $\vw^{(i)}$ via  \eqref{eq:iterate-w} and denote its limit by $\vw^\star$ \hfill \texttt{($\vw$-step)}}

    \State {Form $\vu \gets \langle \vx, \vw^\star\rangle \vw^\star$ by Theorem~\ref{thm:ball-condition:h1} \hfill \texttt{($r$-step and $d$-step)}}

    \State {Pad $\vu$ with a zero block if necessary by Theorem~\ref{thm:h1:n>2-xn=0}}
    \State {$\vu \leftarrow \mP^{-1}\vu \in \prox_{\frac{1}{\rho} h_1}(\vx)$}
\EndProcedure
\end{algorithmic}
\caption{Computing the Proximal Operator of $h_1$} \label{alg:prox_h1}
\end{algorithm}

Due to the inherent nonconvexity of problem~\eqref{eqn:proxrewrite-convex}, the initial guess provided to any algorithms for this problem significantly influences the quality of the solution obtained. In our simulations, we have observed that choosing $\vw^{(0)}=\alpha \frac{\vx}{\|\vx\|_2}$ with $\alpha \in [\frac{1}{4}, \frac{3}{4}]$ tends to yield satisfactory results. The numerical result  with Algorithm~\ref{alg:prox_h1} in $\mathbb{R}^2$ is shown in Figure~\ref{fig:Prox_h1_Sparse_Region-2}(b). In comparison with Figure~\ref{fig:Prox_h1_Sparse_Region-2}(a), the regions which are identified to be mapped to the origin by  $\mathrm{prox}_{\frac{1}{\rho} h_1}$ are consistent.

\section{Conclusions}\label{sec:conclusions}
This paper addresses the computation of proximity operators of scale and signed permutation invariant functions. By delving into the intrinsic properties of these functions, we introduce a procedure called  WRD, which includes the $\vw$-step, $r$-step, and $d$-step, to effectively handle the computation of proximity operators. Specifically, we conduct a thorough investigation into two specific scale and signed permutation invariant functions: the ratio of $\ell_1/\ell_2$ and its square. For the function $(\ell_1/\ell_2)^2$,  we propose an algorithm capable of explicitly generating its proximity operator through a few straightforward steps. Additionally, for the function $\ell_1/\ell_2$, we devise an efficient algorithm with guaranteed convergence to compute its proximity operator.

In future endeavors, we aim to explore the practical applications of these developed algorithms, particularly in sparse signal recovery and image processing domains.

\backmatter






\section*{Declarations}
\begin{itemize}
    \item The authors declare that they have no conflict of interest.
    \item The work of L. Shen was supported in part by the National
Science Foundation under grant DMS-2208385 and by 2023 and 2024 Air Force Summer Faculty Fellowship Program (SFFP). Any opinions, findings
and conclusions or recommendations expressed in this material are those of the
authors and do not necessarily reflect the views of the National Science Foundation and AFRL (Air Force Research
Laboratory).

\end{itemize}




\end{document}